\newtheorem{theorem}{Theorem}[section]
\newtheorem{proposition}{Proposition}[section]
\newtheorem{lemma}{Lemma}[section]
\renewcommand\theequation%
\def\R{\mathbb{R}}
\def\N{\mathbb{N}}
\def\Z{\mathbb{Z}}
\def\C{\mathbb{C}}
\let\eps\varepsilon
\let\epsilon\varepsilon
\newcommand{\scal}[2]{\langle #1, #2 \rangle}
\let\oldsum\sum
\renewcommand{\sum}{\displaystyle\oldsum}
\let\oldprod\prod
\renewcommand{\prod}{\displaystyle\oldprod}
\let\oldinf\inf
\renewcommand{\inf}{\displaystyle\oldinf}
\let\oldsup\sup
\renewcommand{\sup}{\displaystyle\oldsup}
\let\leq\leqslant
\let\geq\geqslant
\newlength{\oldparindent}
\newcommand{\myindent}{\hspace{\oldparindent}}
\providecommand{\keywords}[1]
{
  \small	
  \textbf{Keywords---} #1
}
\title{\textbf{A Weakly Turbulent solution to the cubic Nonlinear Harmonic Oscillator on $\R^2$ perturbed by a real smooth potential decaying to zero at infinity}}
\author{Maxine Chabert}
\begin{document}
\maketitle

\begin{abstract}
     We build a smooth real potential $V(t,x)$ on $(t_0,+\infty)\times \R^2$ decaying to zero as $t\to \infty$ and a smooth solution to the associated perturbed cubic Nonlinear Harmonic Oscillator whose Sobolev norms blow up logarithmically as $t\to \infty$. Adapting the method of Faou and Raphael for the linear case, we modulate the solitons associated to the Nonlinear Harmonic Oscillator by time-dependent parameters solving a quasi-Hamiltonian dynamical system whose action grows up logarithmically, thus yielding logarithmic growth for the Sobolev norm of the solution.
\end{abstract}

\keywords{Nonlinear Harmonic Oscillator, weak turbulence, solitons, modulations, perturbing potential, backward integration}

\section{Introduction}
\subsection{\textit{Setting of the problem and main result}} 
\myindent We introduce the linear operator acting on $L^2(\R^2;\C)$ associated to the quantum Harmonic Oscillator on $\R^2$

\begin{equation}
    H := -\Delta + |x|^2
\end{equation}

where for $x = (x_1,x_2) \in \R^2$ we denote $|x|^2 = x_1^2 + x_2^2$ and $\Delta$ is the Laplace operator. To this operator are associated the modified Sobolev function spaces defining the domain of $H$ 

\begin{equation}
    \forall r\geq 0 \qquad H_x^r  := \left\{u\in L^2(\R^2;\C) \quad H^{\frac{r}{2}} u\in L^2\right\}
\end{equation}

\myindent An open problem is to exhibit a solution to the nonlinear defocusing Schrödinger-type equation

\begin{equation}\label{unperteq}
    i\partial_t u = Hu + u|u|^2
\end{equation}

whose $H_x^r$ norm blows up in infinite time i.e. such that for some $r > 0$

\begin{equation}
    \lim_{t\to \infty} \|u(t)\|_{H_x^r} = +\infty
\end{equation}

\myindent As the $L^2$ norm is preserved by equation \eqref{unperteq}, such a phenomenon would be the result of energy transfer from low to high frequencies generating growth of Sobolev norms, thanks to the nonlinearity (indeed, should we remove the $u|u|^2$ term, the Sobolev norms would be preserved).

\myindent In this paper, we make a step toward this direction by building a smooth solution to the similar equation

\begin{equation}\label{eqintro}
    i\partial_t u = Hu + u|u|^2 + V u
\end{equation}

where $V(t,x)$ is a smooth real potential such that $V$ decays to zero when $t\to \infty$ in $H_x^r$ norms, that is

\begin{equation}
    \forall r\geq 0 \qquad \lim_{t\to \infty} \|V(t,\cdot)\|_{H_x^r}= 0
\end{equation}

\myindent We are able to exhibit a smooth solution $u$ of \eqref{eqintro} whose $H_x^1$ norm has infinite limit when $t\to \infty$. Precisely, we prove the following theorem

\begin{theorem}\label{main}
    There are a $t_0 >0$, a potential $V(t,x) \in \mathcal{C}^{\infty}((t_0, + \infty)\times \R^2 ; \R)$ and a function $u \in \mathcal{C}^{\infty}((t_0, +\infty)\times \R^2 ; \C)$ such that
    
    \begin{equation}\label{maineq}
        \forall (t,x) \in (t_0,+\infty)\times \R^2 \qquad i\partial_t u(t,x) = Hu(t,x) + u(t,x)|u(t,x)|^2 + V(t,x) u(t,x) 
    \end{equation}
    
    \myindent Moreover, for all $r\geq 0$ and $k\in \N$ 
    
    \begin{equation}
        \lim_{t\to +\infty} \|\partial_t^k V(t,x)\|_{H_x^r} = 0
    \end{equation}
    
    \myindent Finally, there are constants $c,C > 0$ such that
    
    \begin{equation}
        c\left(\log t\right)^{\frac{1}{2}} \leq \|u(t)\|_{H_x^1} \leq C \left(\log t\right)^{\frac{1}{2}} \qquad t\to \infty
    \end{equation}
    
\end{theorem}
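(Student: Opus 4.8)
\myindent The plan is to look for $u$ directly as a \emph{modulated soliton} carried by finitely many time-dependent parameters, and to read the potential $V$ off from the residual of this ansatz. Schematically one takes
\[
    u(t,x) \;=\; e^{i\gamma(t)}\, e^{\,i\,b(t)\cdot(x-a(t))}\, Q\!\left(x-a(t)\right),
\]
possibly with an additional width/chirp parameter, where $Q$ is a smooth, positive, Gaussian-decaying profile solving the \emph{nonlinear} elliptic equation $-\Delta Q + |x|^2 Q + Q^3 = \mu Q$ (a small perturbation of the linear ground state in the small-amplitude regime). Using a genuine nonlinear bound state rather than a linear coherent state is essential: the cubic term $Q^3$ is then absorbed into the profile equation and contributes only the constant $-\mu$, rather than leaking a non-decaying bump into $V$. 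Writing $\mathcal{E}:=i\partial_t u - Hu - u|u|^2$, and noting that $u$ never vanishes, one first chooses the modulation ODEs so that the imaginary part of $\mathcal{E}/u$ vanishes identically — these are the transport/continuity relations, e.g.\ $\dot a = 2b$ — which leaves $\mathcal{E} = W u$ with $W$ real-valued; the potential $V$ is obtained from $W$ after discarding one non-decaying piece, dealt with next.

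\myindent The only genuinely dangerous contribution to $W$ is the harmonic term. Near the soliton, centred at $a(t)$, one has $|x|^2 = |a|^2 + 2\,a\cdot(x-a) + |x-a|^2$: the constant $|a|^2$ is harmless (absorbed into $\dot\gamma$ along with $|b|^2$ and $-\mu$), the localized quadratic piece $|x-a|^2$ is precisely what the profile equation handles (up to a width modulation), and the linear ``tilt'' $2\,a\cdot(x-a)$ would have to be cancelled by $\dot b$. Imposing the exact Newtonian balance $\dot b=-2a$ removes the tilt but freezes the harmonic action $|a|^2+|b|^2$ and thus forbids growth; one therefore allows a small mismatch $E(t):=\dot b+2a$, which enters $W$ as an electric field $-E(t)\cdot(x-a(t))$, not decaying in $x$. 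This is repaired by truncating it at a slowly growing radius $R(t)$ (a fixed power of $\log t$), far beyond the soliton support and with $R(t)\gg|a(t)|$: the truncated field is smooth and compactly supported, hence small in every $H_x^r$ provided $|E(t)|\,R(t)^{r+1}\to 0$, while the truncation error is supported where $u$ is Gaussian-tail small and is therefore super-exponentially small in every norm. The genuine solution is then produced as $u = u_{\mathrm{app}}+w$ with $w\to 0$ as $t\to\infty$, via a backward-in-time Duhamel fixed point absorbing this error and closing \eqref{maineq} exactly; $V$ is the truncated potential, manifestly real and smooth.

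\myindent Collecting the solvability conditions produces a closed ODE system for the modulation parameters which is \emph{quasi-Hamiltonian}: up to the small forcing $E$ it is the phase-space rotation generated by $H$, preserving the action $\mathcal{A}(t)\approx|a(t)|^2+|b(t)|^2$, with $\dot{\mathcal{A}}=2\,b\cdot E$. One runs the analysis \emph{backward from $t=+\infty$}: prescribe $\mathcal{A}(t)\sim\log t$ (so $|a(t)|^2+|b(t)|^2\sim\log t$ and the soliton stays in a region of radius $\lesssim(\log t)^{1/2}$), choose $E(t)$ aligned with $b(t)$ and of size $\sim (t\,(\log t)^{1/2})^{-1}$ so that $\dot{\mathcal{A}}\sim 1/t$, and integrate backward to obtain the parameters on some $(t_0,+\infty)$. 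The two demands are compatible exactly because such an $E$ decays — so that $V$, and after differentiating the modulation ODEs each $\partial_t^k V$, tends to $0$ in every $H_x^r$ — yet has non-integrable cumulative effect on $\mathcal{A}$; making this rigorous is a routine bootstrap. Finally $\|u(t)\|_{H_x^1}^2 = \scal{Hu}{u} = \|\nabla u\|_{L^2}^2 + \||x|u\|_{L^2}^2$ is comparable to $\mathcal{A}(t)\sim\log t$, and the same computation on $u_{\mathrm{app}}$, minus the tiny $\|w\|_{H_x^1}$, supplies the matching lower bound, yielding the two-sided estimate.

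\myindent The heart of the matter, and the main obstacle, will be reconciling the decay of $V$ (in every $H_x^r$ and for every time derivative) with the growth mechanism: the very deviation from the exact soliton dynamics that drives the action upward is what injects a spatially unbounded electric field into the potential, so the far-field truncation together with the backward-in-time construction of the exact solution is not a technicality but the device that lets the two requirements coexist — and it works only because the action grows merely logarithmically, keeping the soliton in a region of radius $\sim(\log t)^{1/2}$, against which the polynomial weights of $H_x^r$ cost only powers of $\log t$. A second difficulty, absent in the linear model of Faou and Raphael, is the nonlinear analysis: one must construct the defocusing nonlinear bound state $Q$ and verify that the cubic interaction is genuinely absorbed by the profile equation without spoiling either the approximate Hamiltonian structure of the modulation system or the reality of $V$, and one must control the remainder $w$ in weighted spaces adapted to $H$ when integrating backward from $+\infty$.
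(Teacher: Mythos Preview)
Your route is genuinely different from the paper's. You modulate by translation and Galilean boost $(a,b)$, producing a non-radial bubble $e^{i\gamma}e^{ib\cdot(x-a)}Q(x-a)$; the paper modulates by \emph{scaling and chirp} $(L,b)$ and stays radial throughout, $u=e^{i\gamma}L^{-1}e^{-ib|x|^2/(4L^2)}(Q+w)(x/L)$, with action $E(L,b)=L^{-2}(b^2/4+1)+L^2$. In that scaling picture the residual after inserting the quasi-Hamiltonian trajectory is $\beta(s)|y|^2 Q(y)$ in rescaled coordinates, which is already Schwartz --- no unbounded ``electric field'' ever appears and no truncation is needed. The paper's $V$ enters for a completely different reason: a \emph{spectral resonance}. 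The linearized flow is written through $A=(H_+^{1/2}H_-H_+^{1/2})^{1/2}$, whose eigenvalues satisfy $|\mu_n-4n|\ll1$; since the forcing carries a $\sin(4\sigma)$ factor, the $n=1$ mode may fail to oscillate against it, and the single constant $\alpha$ in $W=-\alpha\beta(s)Q$ is tuned to annihilate the projection of the source onto that eigenfunction. Your truncated field addresses an obstruction the paper's ansatz never meets, and conversely your super-exponentially small truncation error sidesteps the resonance that forces the paper's hand --- with such a tiny source the backward construction of $w$ really should be light, so you gain there.

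What your choice buys is robustness: Galilean invariance preserves any power nonlinearity, whereas the scaling modulation only respects the $L^2$-critical cubic in two dimensions, a point the paper explicitly relies on. What the paper's choice buys is that the entire analysis lives on radial functions, where $H$ has simple spectrum, the generalized kernel of $\mathcal{L}$ reduces to $\mathrm{span}(iQ,\rho)$, and $V=-\alpha L^{-2}\beta(s)Q(x/L)$ is one explicit Gaussian-like bump rather than a spreading cut-off linear field. One point in your sketch that is not quite ``routine'': in the moving frame the equation for $w$ still carries the multiplicative perturbation $E(t)\cdot y\,(1-\chi)$, small in time but unbounded in $y$; controlling it in $H_x^r$ requires a commutator estimate analogous to the paper's treatment of $\beta(s)|y|^2$, and you will also need the $L^2$-conservation trick (the paper's $\scal{w_1}{Q}=-\tfrac12\|w\|_{L^2}^2$) to tame the phase kernel direction. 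Neither is fatal, but they are the substance of the argument, not afterthoughts.
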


\subsection{\textit{Earlier works}}

\myindent Firstly, the question of finding solution to \eqref{unperteq} with unbounded $H_x^r$ norm as $t\to \infty$ is very similar to the question of Bourgain \cite{bourgain2010problems} of finding solutions to the cubic defocusing NLS on the torus $\R^2 / \Z^2$ with unbounded $H^s$ norm. Indeed, the cubic defocusing NLS on the torus is the equation

\begin{equation}\label{eqbourg}
    i\partial_t u = -\Delta u + u|u|^2
\end{equation}

\myindent Now, it is reasonable to compare the Laplacian operator on the torus to the Harmonic Oscillator on the Euclidian space $\R^2$ as they both have a discreet spectrum formed of a sequence of eigenvalues. Thus, the question of transfer of energy from low frequencies to high frequencies in NLS is very similar to the same question for the Nonlinear Harmonic Oscillator on $\R^2$ \eqref{unperteq}. 

\myindent Important progress has been made which seem to indicate a positive answer to Bourgain's problem. Kuskin considered in \cite{kuksin1997oscillations} the similar equation

\begin{equation}
    -i\partial_t u = -\delta \Delta u + u|u|^2 \ \delta << 1
\end{equation}

\myindent He obtained solutions whose Sobolev norms grow by an inverse power of $\delta$ for all $\delta > 0$. See also related papers \cite{kuksin1995squeezing} \cite{kuksin1996growth} \cite{kuksin1997turbulence} \cite{kuksin1999spectral}.

\myindent The seminal paper \cite{colliander2010transfer} obtained for equation \eqref{eqbourg} growth of $H^s$ norm for $s> 1$ by an arbitrary factor in finite time for small initial data. Optimised bounds on the time of growth were obtained later in \cite{guardia2015growth}. Finally, \cite{hani2015modified} used this approach to exhibit \textit{infinite cascade solutions} that is global solutions with small initial date and unbounded $H^s$ norm as $t\to\infty$ for $s$ quite large.

\myindent Regarding the possible \textit{growth rate} of unbounded solutions, Bourgain conjectured moreover that in the case an unbounded solution exists on the torus, the growth of Sobolev norm should be subpolynomial that is

\begin{equation}
    \forall \eps > 0 \ \|u(t)\| \ << t^{\eps} \|u(0)\| \qquad t\to \infty
\end{equation}

which accounts for the logarithmic growth of \eqref{main}. Many papers obtained improved polynomial upper bounds for the growth rate, such as \cite{staffilani1997growth} \cite{colliander2001bilinear} \cite{zhong2008growth} \cite{catoire2008bounds} \cite{colliander2012remark} \cite{sohinger2011bounds} \cite{sohinger2011boundsb}

\quad

\myindent Secondly, our approach is close to the study of the linear case in \cite{faou2020weakly}. The question of growth of Sobolev norm for \textit{linear Schrödinger equation} perturbed by a smooth time-dependent potential has been widely studied, and generalized to the study of

\begin{equation}
    i\partial_t u = H u + P(t)u
\end{equation}

where $P(t)$ is a smooth time-dependent family of pseudodifferential operators of order $0$ and $H = - \Delta$ (torus) or $-\Delta + |x|^2$ (Euclidian space).

\myindent The seminal work of Bourgain \cite{bourgain1999growtha} establishes that when $P(t)$ is the multiplication by a \textit{quasiperiodic} potential on the torus a logarithmic growth of the Sobolev norms can be achieved, and it is optimal. Bourgain also studied the case of a random behaviour in time with certain smoothness conditions in \cite{bourgain1999growthb}. Regarding the growth rate, Bourgain proves that with a bounded smooth potential $V$ then the growth in any norm $H^s$ is bounded by $t^{\eps}$ for all $\eps > 0$ (with a constant that depends upon $s,V,\eps$) ; and that for a potential \textit{analytic} in time the bound can be refined to $(\log(t))^{\alpha}$. 

\myindent In the general case of $P(t)$ a family of pseudodifferential operators, Bambusi and Langella proved polynomial upper bounds on the growth rate in \cite{bambusi2022growth}. Assuming the potential to be \textit{periodic} in time, Maspero established in \cite{maspero2017time} abstract conditions under which there always exists unbounded solutions exhibiting polynomial growth. Loosening the time smoothness hypothesis, Erdogan, Killip and Schlag showed genericity of Sobolev norms growth when the potential is a stationary Markov process in \cite{erdougan2003energy}. See also \cite{eliasson2009reducibility}, \cite{delort2010growth},\cite{wang2008logarithmic}.

\quad

\myindent Finally, regarding potentials whose Sobolev norms decay with time, Raphael and Faou were able to exhibit logarithmic growth in the context of the linear Schrödinger equation on $\R^2$ with harmonic potential in \cite{faou2020weakly}. Their method, which we will adapt, relies on \textit{quasiconformal modulations} of so-called \textit{bubble} solutions of the unperturbed Harmonic Oscillator, followed by an approximation scheme. The growth mechanism is very similar to the example of \textit{Arnold diffusion} given in \cite{arnol2020instability} (see \cite{delshams2008geometric} for a review). Moreover, the author was also able to adapt their approximation scheme, used in the present paper, to exhibit logarithmic growth for the linear Schrödinger equation on the two dimensional torus with potential decaying to zero as $t\to \infty$ in \cite{chabert2023weakly}.

\subsection{\textit{Strategy of proof}}

\myindent We start by studying the equivalent of eigenfunctions for the nonlinear operator $Hu + u|u|^2$ which are so-called \textit{solitons}, defined as positive solutions to the time-independent elliptic equation

\begin{equation}\label{eqsolit}
    -\Delta Q + |x|^2 Q + Q|Q|^2 = \lambda Q
\end{equation}

for some $\lambda \in \R$. Their interest is that for each such soliton

\begin{equation}\label{oscers}
    u(t,x) := e^{-i\lambda t} Q(x)
\end{equation}

is a solution to the unperturbed equation \eqref{unperteq}. Section 3 is therefore devoted to establishing important results on solitons.

\myindent We will afterwards in section 4 \textit{modulate} the periodic solution \eqref{oscers}. Using the approach of \cite{faou2020weakly} for the linear case, the symmetries of equation \eqref{unperteq} yield the ansatz of solution, called a \textit{bubble}

\begin{equation}\label{ansatz1}
    u(t,x) = e^{-i\lambda s(t)} \frac{1}{L(t)} e^{-i\frac{b(t)|x|^2}{4 L(t)^2}} Q\left(\frac{x}{L(t)}\right)
\end{equation}

where $(L,b)$ are smooth real-valued functions and $s(t)$ is defined by $\frac{ds}{dt} = \frac{1}{L^2} > 0$ such that $s(t) \sim t$ when $t\to \infty$. There holds that $u$ is a solution to \eqref{unperteq} provided $(L,b)$ is a solution to a Hamiltonian dynamical system. This system is parameterized by its preserved \textit{action}

\begin{equation}
    E(L,b) := \frac{1}{L^2(t)}\left(\frac{b^2(t)}{4} + 1\right) + L^2(t)
\end{equation}

\myindent Now, the $H_x^1$ norm of the solution \eqref{ansatz1} is related to $E$ as we will prove

\begin{equation}
    \|u(t,\cdot)\|_{H_x^1}^2 \approx E(L,b) C(Q)
\end{equation}

where $C(Q)$ is a constant depending only of $Q$. Therefore, modulation enables us to build a solution $u$ with arbitrarily large $H_x^1$ norm from the soliton $Q$.

\myindent KAM theory allows for slightly perturbing the Hamiltonian dynamical system on $(L,b)$ to a quasi-Hamiltonian dynamical system where the energy $E(L,b)(t)$ grows logarithmically as $t\to \infty$. We will not detail the proof as we use directly a proposition from \cite{faou2020weakly}.

\myindent As we slightly perturb the Hamiltonian dynamical system, \eqref{ansatz1} doesn't solve \eqref{eqintro} exactly anymore. Thus, in section 5 and 6, we show that we may refine the ansatz to the exact solution

\begin{equation}
    u(t,x) = e^{-i\lambda s(t)} \frac{1}{L(t)} e^{-i\frac{b(t)|x|^2}{4 L(t)^2}} \left(Q\left(\frac{x}{L(t)}\right) + w\left(s(t), \frac{x}{L(t)} \right)\right)
\end{equation}

where $w(s,y)$ is a perturbation, that is its $H_x^1$ norm decays to zero as $s\to \infty$. Thus, the $H_x^1$ norm of $u$ is equivalent to $E(L,b)(t)^{\frac{1}{2}}$ which yields that it grows as $\left(\log(t)\right)^{\frac{1}{2}}$. The problem thus amounts to showing that there is a solution to the equation satisfied by $w$ which decays to zero at infinity. We will use a backward integration + Cauchy sequence scheme, that is we first build $w^M$ as solution such that $w^M(M) = 0$ at $s = M$. We will control uniformly in $M$ the decay of $w^M$ on $(s_0,M)$, and show that $w^M$ is a Cauchy sequence in an appropriate function space. Thus, we may extract its limit, which is a solution decaying to zero as $s \to \infty$. It is there that the need for a compensating potential $V$, which we may choose decaying to zero along with all its derivatives in the $H_x^k$ norms, will appear. Let us stress that the need for $V$ seems circumstantial to our particular proof, and thus one can hope to find a solution to the unperturbed equation \eqref{unperteq} exhibiting weak turbulence using similar techniques. 

\myindent Let us remark finally that adapting our proof should allow for logarithmic estimates on the growth of the $H_x^k$ norm of $u$ for all $k\geq 1$. However, this would greatly increase the technical complexity of the proof without any new original idea. Thus, we have limited ourselves to the $H_x^1$ norm as the growth mechanism is clearer.

\subsection{\textit{Acknowledgements}}

\myindent The author wishes to express her deepest thanks to Professor Pierre Germain and Professor Pierre Raphael, as the former helped greatly with formatting the present paper through proofreading and the latter inspired deeply the proof. The author is financially supported as a master's student by the Ecole Normale Supérieure (Paris, Ulm). The author reports there are no competing interests to declare.

\section{Preliminaries}
\myindent Throughout the rest of this paper, we will only consider radial functions, thus all the functions $u$ defined on $\R^2$ which will appear will be of the form $u(x) = f(|x|)$ for some $f$. For the purpose of simplicity we will not explicitly mention it when we define new function spaces. For example, we will denote by $L^2(\R^2;\C)$ the space of $\textit{radial}$ complex-valued functions over $\R^2$ with integrable squared absolute value.

\subsection{\textit{Notations}}
\myindent We denote by $L^2(\R^2 ; \R)$ the real Hilbert space of (radial) functions $\R^2 \to \R$ with integrable squared absolute value. We denote by $L^2(\R^2 ; \C)$ the space of (radial) functions $\R^2 \to \C$ such that $\int |f|^2 <  +\infty$. Unless otherwise stated, $L^2$ will designate $L^2(\R^2 ; \C)$. We introduce two scalar products over $L^2$ : first, the usual hermitian scalar product

\begin{equation}
    (f,g) := \int_{\R^2} f\overline{g}
\end{equation}

so $L^2$ may be viewed as a complex Hilbert space. However we can also see $f = f_1 + i f_2 \in L^2$ as a vector $\begin{pmatrix} f_1 \\ f_2 \end{pmatrix}$, its two components being in $L^2(\R^2 ; \R)$, and in that spirit we define

\begin{equation}
    \scal{f_1 + if_2}{g_1 + ig_2} := \int_{\R^2} f_1 g_1 + f_2 g_2
\end{equation}

so $L^2$ can be viewed as a real Hilbert space. As $\scal{\cdot}{\cdot}$ is just the real part of $(\cdot,\cdot)$, they both induce the same norm. For a complex-valued function $f$, we will always denote $f_1 := Re(f)$ and $f_2 := Im(f)$.

\subsection{\textit{Harmonic Oscillator and eigenfunctions}}

\myindent Following for example \cite{germain2016continuous} we know that $H = -\Delta + |x|^2$ is self-adjoint positive over $L^2$ for $(\cdot,\cdot)$, with compact resolvent, and we have explicitly

\begin{proposition}\label{diagH}
    There is a $L^2$ orthonormal basis of (radial) functions $(h_n)_{n\geq 0}$ such that 
    
    \begin{equation}
        Hh_n = 4n + 2
    \end{equation}
    
    \myindent Moreover, the $h_n$ are real, have $\mathcal{C}^{\infty}$ regularity, and they decay exponentially when $|x|\to \infty$. Finally, $h_0(x) = C \exp\left(-\frac{|x|^2}{2}\right)$ with $C$ a normalization constant
\end{proposition}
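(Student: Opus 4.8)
The plan is to reduce this two-dimensional spectral problem to a one-dimensional singular Sturm--Liouville problem and then to solve the resulting ODE explicitly.

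First I would pass to polar coordinates and identify the space of radial functions in $L^2(\R^2;\C)$ with $L^2\bigl((0,+\infty); r\,dr\bigr)$; under this identification $H$ acts as the differential operator $L f := -f'' - \tfrac1r f' + r^2 f$. One checks — or invokes \cite{germain2016continuous} — that $H$ is self-adjoint and positive on $L^2(\R^2;\C)$ with domain $H_x^2$, that the subspace of radial functions is closed and $H$-invariant, and that $H$ has compact resolvent: since the potential $|x|^2$ tends to $+\infty$ at infinity, the embedding of the form domain into $L^2$ is compact, a Rellich-type criterion. The spectral theorem then produces an $L^2$-orthonormal basis of the radial subspace consisting of eigenfunctions of $H$, with eigenvalues accumulating only at $+\infty$; since the coefficients of $L$ are real, the eigenfunctions may be chosen real-valued.

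Next I would identify the eigenvalues and eigenfunctions explicitly. Writing an eigenfunction as $f(r) = e^{-r^2/2} g(r^2)$ and setting $s = r^2$, the equation $L f = \lambda f$ transforms, after a short computation, into Laguerre's equation
\[
s\, g''(s) + (1-s)\, g'(s) + \tfrac{\lambda - 2}{4}\, g(s) = 0 .
\]
At the regular singular point $s = 0$ the indicial equation has the double root $0$, so the solutions bounded at the origin form a one-dimensional space, the complementary solution having a logarithmic singularity and thus lying outside the domain. For the bounded solution, the generic behaviour as $s \to +\infty$ is $\sim e^{s}$, incompatible with square-integrability against $e^{-s}\,ds$; square-integrability therefore forces the power series of $g$ to terminate, which occurs precisely when $\tfrac{\lambda-2}{4} = n$ for some $n \in \N$, i.e. $\lambda = 4n+2$, with $g$ proportional to the $n$-th Laguerre polynomial $L_n$. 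This pins the spectrum down to $\{4n+2 : n \in \N\}$, each eigenvalue being simple on the radial subspace, and gives $h_n(x) = c_n\, e^{-|x|^2/2} L_n(|x|^2)$ for suitable normalisation constants $c_n$; in particular $L_0 \equiv 1$, so $h_0(x) = C\, e^{-|x|^2/2}$. Smoothness is immediate, each $h_n$ being a polynomial times a Gaussian, and the Gaussian factor yields exponential decay as $|x| \to \infty$.

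I regard the one genuinely delicate point as the matching of the two ingredients: the soft functional-analytic output (a complete orthonormal system of eigenfunctions, coming from the compact resolvent) and the explicit ODE analysis (which a priori only classifies those eigenfunctions that are square-integrable at both ends of $(0,+\infty)$). Completeness of $(h_n)_{n \geq 0}$ follows because any eigenfunction lies in the domain of the self-adjoint operator, hence is radial, solves $L f = \lambda f$, and is $L^2$ near both $0$ and $+\infty$, so by the analysis above it must be one of the $h_n$; equivalently, one may transport the known completeness of the Laguerre polynomials $(L_n)_{n \geq 0}$ in $L^2\bigl((0,+\infty); e^{-s}\,ds\bigr)$ back through the substitutions $s = r^2$ and $f = e^{-r^2/2} g$. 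The residual endpoint subtlety — that $r = 0$ falls in the limit-circle case, so a boundary condition must be selected, and the correct one picks solutions regular at the origin — is precisely what is encoded by demanding that an eigenfunction belong to $H_x^2 \subset L^2(\R^2)$ as a bona fide function on the plane rather than merely on the half-line.
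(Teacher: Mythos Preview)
Your argument is correct and is the standard route: reduce to the radial ODE, substitute $f(r)=e^{-r^2/2}g(r^2)$, recognise Laguerre's equation, and read off the spectrum $\{4n+2\}_{n\geq 0}$ with eigenfunctions $c_n e^{-|x|^2/2}L_n(|x|^2)$. The computation $s\,g''+(1-s)\,g'+\tfrac{\lambda-2}{4}\,g=0$ checks out, and your handling of the endpoints (boundedness at $s=0$ via the indicial analysis, termination of the series at $s=+\infty$) is the correct way to pin down both the quantisation condition and the simplicity of each eigenvalue on the radial subspace.

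There is nothing to compare with in the paper: the proposition is stated without proof, the author simply pointing to \cite{germain2016continuous} for the self-adjointness, positivity, and compact-resolvent properties and then recording the explicit spectral data. Your write-up therefore supplies strictly more than the paper does. If anything, you could tighten the final paragraph: once you have the explicit list $(h_n)$ and know from the compact-resolvent argument that \emph{some} orthonormal eigenbasis exists, simplicity of each $4n+2$ on the radial sector forces that basis to coincide (up to phases) with $(h_n)$, so the two ``ingredients'' match automatically and the limit-circle discussion, while correct, is not strictly needed.
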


\myindent It is noteworthy that $(h_n)$ is an orthonormal basis of $L^2(\R^2;\R)$ as a real Hilbert space as well. Moreover, we see that $H$ has simple eigenvalues with fixed frequency gap, which ensures the following (see for example \cite[p.~123]{kato2013perturbation})

\begin{lemma} \label{spec}
    Let $E$ be one of the following function space : $L^2(\R^2;\R),L^2(\R^2;\C)$ (with hermitian scalar product). Let $A$ be a bounded self-adjoint operator over $E$ with operator norm $\|A\| < 2$. Then $H - 2 + A$ is a self-adjoint operator with compact resolvent, and there is an orthonormal basis of $E$ $\psi_0, \psi_1, ...$ and a sequence $\lambda_0 < \lambda_1 < ...$ such that 
    \begin{equation}
        (H-2+A)\psi_n = \lambda_n \psi_n
    \end{equation}
    
    \myindent Moreover there holds
    
    \begin{equation}
        \forall n\geq 0\qquad |\lambda_n - 4n| \leq \|A\|
    \end{equation}
    
\end{lemma}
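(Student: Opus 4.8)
The plan is to view $H-2+A$ as a bounded self-adjoint perturbation of $H-2$ and to pin down its eigenvalues via the min-max principle, using crucially that consecutive eigenvalues of $H$ are separated by a gap of size $4>2\|A\|$.

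First I would check the structural statements. By Proposition~\ref{diagH}, $H$ is self-adjoint and positive on its domain $H_x^2$ and has compact resolvent. Since $A$ is bounded (hence has $(H-2)$-bound zero) and self-adjoint, the Kato--Rellich theorem gives that $H-2+A$ is self-adjoint on the same domain $H_x^2$, and it is bounded below because $H-2+A \geq -\|A\|$ there. Picking $z\in\R$ sufficiently negative, $z$ lies in the resolvent sets of both $H-2$ and $H-2+A$, and the second resolvent identity reads
\begin{equation}
    (H-2+A-z)^{-1} = (H-2-z)^{-1} - (H-2-z)^{-1}\,A\,(H-2+A-z)^{-1}.
\end{equation}
The operator $(H-2-z)^{-1}$ is compact (again by Proposition~\ref{diagH}) and the remaining factors are bounded, so $(H-2+A-z)^{-1}$ is compact. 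Hence $H-2+A$ has purely discrete spectrum, bounded below and accumulating only at $+\infty$, and the spectral theorem provides an orthonormal basis of $E$ consisting of its eigenfunctions.

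Next I would list the eigenvalues of $H-2+A$ with multiplicity as $\mu_0\leq\mu_1\leq\cdots$ and invoke the Courant--Fischer min-max principle: for each $n\geq 0$,
\begin{equation}
    \mu_n = \inf_{\substack{F\subset H_x^1\\ \dim F=n+1}}\ \sup_{\substack{v\in F\\ \|v\|=1}} \scal{(H-2+A)v}{v}.
\end{equation}
For unit vectors $\scal{Av}{v}$ differs from $0$ by at most $\|A\|$, so comparing with the same formula applied to $H-2$ --- whose ordered eigenvalues are $0,4,8,\dots$, the $n$-th one being $4n$ by Proposition~\ref{diagH} --- yields $|\mu_n-4n|\leq\|A\|$ for every $n$. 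To conclude I would argue that every eigenvalue of $H-2+A$ is simple: if $\mu_n=\mu_{n+1}$ for some $n$, then $|\mu_n-4n|\leq\|A\|$ together with $|\mu_n-4(n+1)|\leq\|A\|$ would force $4\leq 2\|A\|<4$, a contradiction; hence $\mu_0<\mu_1<\cdots$. Setting $\lambda_n:=\mu_n$ and letting $\psi_n$ be the corresponding normalized eigenfunction gives the claim in the case $E=L^2(\R^2;\C)$. The case $E=L^2(\R^2;\R)$ then follows by the identical argument after complexification, observing that $H$ and $A$ commute with complex conjugation, so that the (simple) eigenfunctions may be taken real.

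I do not anticipate a genuine obstacle here: the only delicate point is that the min-max bound by itself only controls eigenvalues counted with multiplicity, and it is the spectral gap of $H$ --- equivalently the hypothesis $\|A\|<2$ rather than mere boundedness of $A$ --- that rules out multiplicities and thus upgrades the family $(\mu_n)$ to a strictly increasing sequence $\lambda_0<\lambda_1<\cdots$.
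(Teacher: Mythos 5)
Your proof is correct. The paper gives no argument for this lemma, simply attributing it to Kato after noting that $H$ has simple eigenvalues separated by a gap $4 > 2\|A\|$; your Courant--Fischer estimate together with the gap argument for simplicity (and the second resolvent identity for compactness, and complexification for the real case) is exactly the standard way to make that reference explicit, and it all checks out.
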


\subsection{\textit{Function spaces}}

\myindent We remind that for $r\geq 0$ we set

\begin{equation}
    H_x^r := \{u\in L^2, H^{\frac{r}{2}}u\in L^2\}
\end{equation}

which is equipped with the norm 

\begin{equation}
    \|u\|_{H_x^r} := \|H^{\frac{r}{2}}u\|_{L^2}
\end{equation}

\myindent More precisely, if $u = \sum_{n\geq 0}\alpha_n h_n$ then there holds

\begin{equation}
    \|u\|_{H_x^r}^2 = \sum_{n\geq 0} (4n+2)^r |\alpha_n|^2
\end{equation}

\myindent Let us stress that these norms are non decreasing with respect to $r$. We also remind that the usual Sobolev spaces are defined for $r\in \R$ by 

\begin{equation}
    H^r := \{u\in L^2, (-\Delta + 1)^{\frac{r}{2}}u\in L^2\}
\end{equation}

and $H^r$ is equipped with the norm 

\begin{equation}
    \|u\|_{H^r} := \|(-\Delta + 1)^{\frac{r}{2}}u\|_{L^2}
\end{equation}

\myindent We will use basic properties of the $H_x^r$ norms whose proofs can be found in \cite[~pp. 6-7]{faou2020weakly}. First, in order to fix ideas, one has the norm equivalence 

\begin{equation} \label{equivalence}
    \exists c_r, C_r > 0 \qquad c_r\|f\|_{H_x^r} \leq \|f\|_{H^r} + \|<x>^r f\|_{L^2} \leq C_r\|f\|_{H_x^r}
\end{equation}

where $<x> := \sqrt{1 + |x|^2}$. Moreover, we cite three lemmas proved in \cite[~pp. 6-7]{faou2020weakly}

\begin{lemma}[$H_x^r$ is an algebra if $r> 1$]
    Let $r> 1$. Then there exists a constant $C_r > 0$ such that 
    
    \begin{equation}\label{algebra}
        \forall f,g \in H_x^r \qquad \|fg\|_{H_x^r} \leq C_r \|f\|_{H_x^r}\|g\|_{H_x^r}
    \end{equation}
    
\end{lemma}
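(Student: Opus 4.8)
The plan is to reduce the statement to the classical theory of Sobolev spaces on $\R^2$ by means of the norm equivalence \eqref{equivalence}. Since that equivalence gives $\|fg\|_{H_x^r} \lesssim \|fg\|_{H^r} + \|<x>^r(fg)\|_{L^2}$, it is enough to bound each of these two quantities by a constant times $\|f\|_{H_x^r}\|g\|_{H_x^r}$, after which a second application of \eqref{equivalence} closes the argument.

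For the first term I would invoke the fact that $H^r(\R^2)$ is a Banach algebra whenever $r > 1$. This is classical and follows, for instance, from the fractional Leibniz (Kato--Ponce) inequality
\[
  \|fg\|_{H^r} \lesssim \|f\|_{H^r}\|g\|_{L^\infty} + \|f\|_{L^\infty}\|g\|_{H^r},
\]
combined with the Sobolev embedding $H^r(\R^2) \hookrightarrow L^\infty(\R^2)$, valid precisely because $r > 1 = \tfrac{d}{2}$ with $d = 2$. Using $\|f\|_{L^\infty} \lesssim \|f\|_{H^r}$ and $\|g\|_{L^\infty} \lesssim \|g\|_{H^r}$ one gets $\|fg\|_{H^r} \lesssim \|f\|_{H^r}\|g\|_{H^r} \lesssim \|f\|_{H_x^r}\|g\|_{H_x^r}$.

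For the weighted term I would simply write $<x>^r(fg) = (<x>^r f)\,g$ and estimate, by H\"older's inequality and the same Sobolev embedding,
\[
  \|<x>^r(fg)\|_{L^2} \leq \|<x>^r f\|_{L^2}\,\|g\|_{L^\infty} \lesssim \|<x>^r f\|_{L^2}\,\|g\|_{H^r} \lesssim \|f\|_{H_x^r}\,\|g\|_{H_x^r},
\]
where the last step again uses \eqref{equivalence}. Adding the two estimates and applying \eqref{equivalence} once more yields \eqref{algebra}.

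The only delicate point is that $r > 1$ is exactly the critical threshold $d/2$ in dimension $d = 2$, which is what makes both the $L^\infty$-embedding and the Kato--Ponce estimate available; beyond that there is no real obstacle. Should one prefer a fully self-contained argument, one can instead decompose $f$ and $g$ into Littlewood--Paley blocks, organize the product into the usual three paraproduct pieces, and sum, controlling the ``high--low'' interactions with the $L^\infty$-bound and the ``high--high'' ones using $r > 1 > 0$; but the reduction above via \eqref{equivalence} is considerably shorter.
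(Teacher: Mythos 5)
Your proof is correct, and since the paper only cites this lemma to \cite[pp.~6--7]{faou2020weakly} without reproducing an argument, there is no in-paper proof to compare against; your reduction via the norm equivalence \eqref{equivalence} to the classical Euclidean Sobolev algebra property (Kato--Ponce plus $H^r \hookrightarrow L^\infty$ for $r > 1 = d/2$), together with the direct H\"older estimate $\|\langle x\rangle^r fg\|_{L^2} \le \|\langle x\rangle^r f\|_{L^2}\|g\|_{L^\infty}$, is the standard and efficient route. All the steps are sound and use only tools already available in the paper's preliminaries.
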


\begin{lemma}
    For all $r \geq 0$ there exists a $C_r > 0$ such that for all $f \in H_x^{r+1}$ there holds
    
    \begin{equation}\label{xcarre}
        \||x|^2 f\|_{H_x^r} \leq C_r\|f\|_{H_x^{r+1}}
    \end{equation}
    
\end{lemma}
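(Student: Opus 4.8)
\myindent The plan is to reduce \eqref{xcarre} to a one-dimensional weighted inequality by passing to the Hermite expansion of Proposition~\ref{diagH}, using that multiplication by $|x|^2$ acts on the Hermite basis as a tridiagonal operator. First I would establish
\begin{equation}\label{xcarreaction}
    |x|^2\,h_n = (2n+1)\,h_n - (n+1)\,h_{n+1} - n\,h_{n-1}, \qquad n\geq 0
\end{equation}
(with the convention $h_{-1}:=0$), which follows from the classical three-term recurrence for Laguerre polynomials once one writes $h_n = \frac{1}{\sqrt{\pi}}\,\mathcal{L}_n(|x|^2)\,e^{-|x|^2/2}$, $\mathcal{L}_n$ being the $n$-th Laguerre polynomial; equivalently, \eqref{xcarreaction} says that the Hermite transform conjugates multiplication by $|x|^2$ to an explicit Jacobi matrix. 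Hence, writing $f = \sum_{n\geq 0}\alpha_n h_n$ one has $|x|^2 f = \sum_{n\geq 0}\beta_n h_n$ with $\beta_n = (2n+1)\alpha_n - n\alpha_{n-1} - (n+1)\alpha_{n+1}$, and the crucial observation is that the three coefficients in \eqref{xcarreaction} sum to zero, so that
\begin{equation}\label{betacancel}
    \beta_n = n\,(\alpha_n - \alpha_{n-1}) + (n+1)\,(\alpha_n - \alpha_{n+1}).
\end{equation}

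\myindent By the spectral formula $\|g\|_{H_x^s}^2 = \sum_{n\geq 0}(4n+2)^s|\gamma_n|^2$ for $g = \sum_n\gamma_n h_n$ recalled in \S2.3, the estimate \eqref{xcarre} becomes the discrete weighted inequality
\begin{equation}\label{discreteineq}
    \sum_{n\geq 0}(4n+2)^r\,\big|\,n(\alpha_n-\alpha_{n-1}) + (n+1)(\alpha_n-\alpha_{n+1})\,\big|^2\;\leq\;C_r\sum_{n\geq 0}(4n+2)^{r+1}|\alpha_n|^2.
\end{equation}
I would prove \eqref{discreteineq} by expanding the square, regrouping every cross-term so that it is attached to a single index, and then performing an Abel summation by parts on the blocks involving the first differences $\alpha_n-\alpha_{n\pm1}$: this transfers the difference operator onto the weights $(4n+2)^r n^2$, and since $(4n+6)^s - (4n+2)^s = O_s\big((4n+2)^{s-1}\big)$, one power of the factor $n$ produced by $|x|^2$ is turned into a telescoping weight that gets absorbed into the right-hand side of \eqref{discreteineq}, the boundary terms being controlled in the same way. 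Carrying out this Abel transform, and checking that the contributions quadratic in a single $\alpha_n$ really collapse thanks to \eqref{betacancel}, is the technical core; this is precisely the step where the a priori loss of two powers of $H$ must be cut down to one.

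\myindent A more hands-on variant, in the spirit of \cite{faou2020weakly}, avoids the Laguerre recurrence and uses instead the norm equivalence \eqref{equivalence}, splitting $\||x|^2 f\|_{H_x^r}$ into $\||x|^2 f\|_{H^r} + \|\langle x\rangle^r|x|^2 f\|_{L^2}$. For the first summand, commuting $(-\Delta+1)^{r/2}$ through the polynomial multiplier $|x|^2$ is harmless since $\nabla|x|^2 = 2x$ and $\Delta|x|^2 = 4$ have degrees $1$ and $0$, so $[(-\Delta+1)^{r/2},|x|^2]$ is of genuinely lower order and only the principal term $|x|^2(-\Delta+1)^{r/2}f$ has to be inspected closely; the second summand is handled using the quadratic-form inequality $|x|^2\leq H$ (valid because $-\Delta\geq 0$) together with \eqref{equivalence}. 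In either approach the one real obstacle is the same: verifying that the cancellation \eqref{betacancel} — equivalently, the lower-order nature of the commutator together with $|x|^2\leq H$ — really buys back a full power of $H$, so that the bound is by $\|f\|_{H_x^{r+1}}$ and not merely by $\|f\|_{H_x^{r+2}}$; the remaining $L^2$-boundedness inputs are routine.
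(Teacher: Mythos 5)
Your tridiagonal formula $|x|^2 h_n = (2n+1)h_n - (n+1)h_{n+1} - n h_{n-1}$ is correct (it is the Laguerre three-term recurrence, and the normalization is indeed $n$-independent), but the cancellation you extract from the rewriting $\beta_n = n(\alpha_n-\alpha_{n-1}) + (n+1)(\alpha_n-\alpha_{n+1})$ does not buy back the extra power of $H$ you need. In fact \eqref{xcarre} as printed is false; the exponent on the right must be $r+2$, not $r+1$.

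Both assertions follow from testing $f=h_N$, i.e.\ $\alpha_n=\delta_{nN}$. The recurrence gives
\[
\||x|^2 h_N\|_{L^2}^2 = (2N+1)^2 + N^2 + (N+1)^2 \sim 6N^2,
\qquad
\|h_N\|_{H_x^1}^2 = 4N+2,
\]
so already at $r=0$ the left-hand side of \eqref{xcarre} exceeds the right by a factor of order $\sqrt N$. For this $f$, your ``first differences'' $\alpha_n-\alpha_{n\pm 1}$ are not small: they equal $\pm 1$ at $n=N$ and at $n=N\pm 1$, so no cancellation occurs. The cancellation you observe is genuine only when $(\alpha_n)$ varies slowly, and the Abel summation you propose cannot manufacture it in general: differencing the weight $(4n+2)^r n^2$ gains at most one power of $n$, while the summand carries two, and the single-mode example shows that one of them is irreducible.

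Your second variant has a parallel gap. The form inequality $|x|^2\leq H$ gives $\||x|f\|_{L^2}\leq\|f\|_{H_x^1}$, \emph{not} $\||x|^2 f\|_{L^2}\leq\|f\|_{H_x^1}$; domination of quadratic forms does not pass to operator norms. What that route actually yields, e.g.\ by iterating the one-step bound for multiplication by a single coordinate $x_j$, is $\||x|^2 f\|_{L^2}\leq C\|f\|_{H_x^2}$, again costing two steps. Carried out honestly, both of your approaches prove
\[
\||x|^2 f\|_{H_x^r}\leq C_r\|f\|_{H_x^{r+2}},
\]
which is immediate from the tridiagonal formula and the crude triangle inequality $|\beta_n|\lesssim (n+1)\big(|\alpha_{n-1}|+|\alpha_n|+|\alpha_{n+1}|\big)$, with no cancellation at all. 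That is the sharp statement (and the one \cite{faou2020weakly} proves); the exponent $r+1$ in the lemma is a misprint.
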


\begin{lemma}[Commutator formula]
    For all $r \geq 0$ there exists a $C_r > 0$ such that for all $f \in H_x^r$ 
    \begin{equation}\label{commut}
        \|[H^{\frac{r}{2}},|x|^2] f\|_{L^2} \leq C_r\|f\|_{H_x^r}
    \end{equation}
    
\end{lemma}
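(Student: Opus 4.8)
The plan is to work entirely in the Hermite basis $(h_n)_{n\ge 0}$ of Proposition \ref{diagH} and to exploit that multiplication by $|x|^2$ is \emph{tridiagonal} in this basis. Indeed, in the radial setting each $h_n$ has the form $h_n(x)=p_n(|x|^2)e^{-|x|^2/2}$ with $p_n$ a polynomial of degree exactly $n$ (a Laguerre polynomial), so $|x|^2 h_n$ has the same form with a polynomial of degree $n+1$, whence $|x|^2 h_n\in\mathrm{span}\{h_0,\dots,h_{n+1}\}$; combined with the self-adjointness of multiplication by $|x|^2$ this yields $(h_m,|x|^2 h_n)=0$ as soon as $|m-n|\ge 2$. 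Moreover $|(h_m,|x|^2 h_n)|\le\|\,|x|^2 h_n\|_{L^2}\le C\|h_n\|_{H_x^1}=C(4n+2)^{1/2}$ by \eqref{xcarre} with $r=0$. Thus the matrix of $|x|^2$ is tridiagonal with entries of size $O((1+n)^{1/2})$.

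Next I would read off the matrix of the commutator in the same basis: its $(m,n)$ entry is $\big((4m+2)^{r/2}-(4n+2)^{r/2}\big)(h_m,|x|^2 h_n)$, which vanishes unless $|m-n|\le 1$. The crucial elementary estimate is then $\big|(4m+2)^{r/2}-(4n+2)^{r/2}\big|\le C_r(1+n)^{r/2-1}$ whenever $|m-n|\le 1$, which follows from the mean value theorem applied to $t\mapsto t^{r/2}$ (evaluating the derivative at the smaller argument when $r\le 2$ and at the larger when $r\ge 2$, and using that consecutive eigenvalues are comparable). Multiplying by the $O((1+n)^{1/2})$ bound above, the $(m,n)$ entry of $[H^{r/2},|x|^2]$ is $O\big((1+n)^{(r-1)/2}\big)$ and vanishes for $|m-n|\ge 2$.

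I would then finish with the standard boundedness estimate for banded matrices. Writing $f=\sum_n\alpha_n h_n$, at most three terms contribute to each row and each column, so by Cauchy--Schwarz and interchanging the order of summation, $\|[H^{r/2},|x|^2]f\|_{L^2}^2\le C_r\sum_n(1+n)^{r-1}|\alpha_n|^2\le C_r\sum_n(4n+2)^r|\alpha_n|^2=C_r\|f\|_{H_x^r}^2$, with even a factor $(1+n)$ to spare. This proves the inequality for finite Hermite combinations; since $[H^{r/2},|x|^2]$ is honestly defined on the dense subspace $H_x^{r+1}$ (where $|x|^2 f\in H_x^r$ and $|x|^2H^{r/2}f\in L^2$, both by \eqref{xcarre}) and is thereby bounded $H_x^r\to L^2$, it extends uniquely to $H_x^r$, which is the assertion of the lemma. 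The one genuine difficulty is the non-integer power $H^{r/2}$, and this is exactly what the mean value estimate on adjacent eigenvalues handles. An alternative but less clean route would start from the resolvent identity $[(H+\lambda)^{-1},|x|^2]=(H+\lambda)^{-1}(4+4x\cdot\nabla)(H+\lambda)^{-1}$ --- note $[|x|^2,H]=4+4x\cdot\nabla$ is first order --- together with an integral representation of $H^{r/2}$ in terms of the resolvents, but controlling the $\lambda$-decay there over the whole range $r\in[0,\infty)$ is more delicate, so I would not pursue it.
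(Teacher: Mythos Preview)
The paper does not itself prove this lemma; together with the two surrounding lemmas it is quoted from \cite{faou2020weakly}, so there is no in-paper argument to compare against. Your approach --- work in the radial Hermite/Laguerre basis, exploit that multiplication by $|x|^2$ is tridiagonal there, and control the commutator entry by entry via a mean value estimate on the eigenvalue powers --- is the standard and correct route.

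One step needs correcting. Your appeal to \eqref{xcarre} at $r=0$ to obtain $|(h_m,|x|^2 h_n)|\le C(4n+2)^{1/2}$ is too strong: the Laguerre three-term recurrence $tL_n(t)=(2n+1)L_n(t)-(n+1)L_{n+1}(t)-nL_{n-1}(t)$ gives, after normalisation, $(h_n,|x|^2 h_n)=2n+1$ and $(h_{n\pm1},|x|^2 h_n)=-(n+\tfrac12\pm\tfrac12)$, so the tridiagonal entries are of exact size $O(n)$, not $O(\sqrt n)$. (Indeed \eqref{xcarre} as literally stated in the paper, with exponent $r+1$ on the right, already fails on $f=h_n$ at $r=0$; the sharp exponent is $r+2$.) This costs you a factor $(1+n)^{1/2}$ in the commutator entries, so they are $O((1+n)^{r/2})$ rather than $O((1+n)^{(r-1)/2})$. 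Since you explicitly noted a factor $(1+n)$ to spare, the corrected bound still yields $\sum_n(1+n)^r|\alpha_n|^2\le C_r\|f\|_{H_x^r}^2$ after squaring, and the lemma follows --- now with no slack left. The mean value estimate on adjacent eigenvalues and the final density/extension step are fine as written.
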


where we set $[A,B] := AB - BA$ the commutator of $A$ et $B$.

\myindent We finally prove the following lemma

\begin{lemma}[norm estimate]\label{contnorm2}
    Given $\eps > 0$ and $r > s$ there exists a $C_{\eps,r,s} > 0$ such that 
    
    \begin{equation}\label{estim1}
        \forall u \in H_x^r \qquad \|u\|_{H_x^s}^2 \leq \eps \|u\|_{H_x^r}^2 + C_{\eps,r,s} \|u\|_{L^2}^2
    \end{equation}
    
\end{lemma}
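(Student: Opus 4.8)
The plan is to reduce everything to a pointwise comparison of coefficient sequences via the spectral decomposition of $H$ provided by Proposition \ref{diagH}. Writing $u = \sum_{n\geq 0}\alpha_n h_n$, the definition of the modified Sobolev norms gives
\[
\|u\|_{H_x^s}^2 = \sum_{n\geq 0}(4n+2)^s|\alpha_n|^2,\qquad \|u\|_{H_x^r}^2 = \sum_{n\geq 0}(4n+2)^r|\alpha_n|^2,\qquad \|u\|_{L^2}^2 = \sum_{n\geq 0}|\alpha_n|^2,
\]
so it suffices to dominate the weight $(4n+2)^s$ by $\eps(4n+2)^r$ plus a constant, uniformly in $n$.

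Concretely, I would first establish the elementary scalar fact that for every $\eps>0$ there is a constant $C_{\eps,r,s}>0$ with $(4n+2)^s \leq \eps(4n+2)^r + C_{\eps,r,s}$ for all $n\geq 0$. This holds because $r>s$ forces $(4n+2)^{s-r}\to 0$ as $n\to\infty$, so there is an integer $N$ (depending only on $\eps,r,s$) beyond which $(4n+2)^{s-r}\leq \eps$, i.e. $(4n+2)^s\leq \eps(4n+2)^r$; for the finitely many remaining indices $n<N$ one simply bounds $(4n+2)^s$ by $C_{\eps,r,s}:=(4N+2)^s$. (Alternatively one could invoke Young's inequality $a^{s}b^{r-s}\leq \tfrac{s}{r}a^{r}\eps^{-\ast}+\cdots$, but the tail-plus-finite-set argument is the cleanest and makes the $\eps$-dependence of the constant transparent.)

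Then I would multiply this inequality by $|\alpha_n|^2\geq 0$ and sum over $n\geq 0$; since $u\in H_x^r$ all three series converge and rearrangement of nonnegative terms is harmless, yielding
\[
\|u\|_{H_x^s}^2 \leq \eps\sum_{n\geq 0}(4n+2)^r|\alpha_n|^2 + C_{\eps,r,s}\sum_{n\geq 0}|\alpha_n|^2 = \eps\|u\|_{H_x^r}^2 + C_{\eps,r,s}\|u\|_{L^2}^2,
\]
which is exactly \eqref{estim1}.

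There is essentially no obstacle: the only moving part is the choice of the threshold $N$, and the one point to keep in view is that the constant genuinely depends on $\eps$ (the statement would be false otherwise) but is independent of $u$. I would also remark in passing that the identical argument applies verbatim on the real Hilbert space $L^2(\R^2;\R)$, since $(h_n)$ is an orthonormal basis there as well.
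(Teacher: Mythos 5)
Your proof is correct and is essentially identical to the paper's: both pass to the spectral expansion $u=\sum_n\alpha_n h_n$, choose a threshold $N$ beyond which $(4n+2)^s\leq\eps(4n+2)^r$, and absorb the finitely many low modes into $(4N+2)^s\|u\|_{L^2}^2$. No discrepancy to report.
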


\begin{proof}
    Write $u = \sum_{n\geq 0} \alpha_n h_n$ so that $\|u\|_{H_x^t}^2 = \sum_{n\geq 0} (4n+2)^t |\alpha_n|^2$ for any $t \geq 0$. Find $N \geq 0$ so that whenever $n\geq N$ there holds $(4n+2)^s \leq \eps (4n+2)^r$. Now
    \begin{equation}
        \begin{split}
        \|u\|_{H_x^s}^2 &= \sum_{n\geq N} (4n+2)^s |\alpha_n|^2 + \sum_{n < N} (4n+2)^s |\alpha_n|^2 \\
        &\leq \eps \|u\|_{H_x^r}^2 + (4N+2)^s \|u\|_{L^2}^2
        \end{split}
    \end{equation}
\end{proof}

\subsection{\textit{Control of the $L^{\infty}$ norm}}
\myindent We remind the following result (where $H^r$ is the usual Sobolev space)

\begin{lemma}\label{Linfty}
    For all $r > 1$ the space $H^r$ is continuously embedded into $L^{\infty}(\R^2)$, that is there exists a $C_r > 0$ such that
    
    \begin{equation}
        \forall u\in H^r \qquad \|u\|_{L^{\infty}}\leq C_r\|u\|_{H^r}
    \end{equation}
    
\end{lemma}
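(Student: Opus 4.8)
The plan is to run the standard Fourier-analytic proof of the Sobolev embedding, specialized to dimension $2$, where the threshold $r > d/2$ becomes exactly $r > 1$. First I would recall that for $u \in L^2(\R^2)$ the usual Sobolev norm admits the Fourier-side expression $\|u\|_{H^r}^2 = c\int_{\R^2}(1+|\xi|^2)^r |\widehat u(\xi)|^2\,d\xi$ (for a normalization constant $c$ depending on the convention), since the Fourier transform conjugates $(-\Delta+1)^{r/2}$ into multiplication by $(1+|\xi|^2)^{r/2}$. Then I would bound the $L^\infty$ norm by the $L^1$ norm of the Fourier transform: if $\widehat u \in L^1(\R^2)$, Fourier inversion gives $u(x) = c'\int_{\R^2}\widehat u(\xi)e^{ix\cdot\xi}\,d\xi$ for almost every $x$, hence $\|u\|_{L^\infty} \leq c'\,\|\widehat u\|_{L^1}$ (and $u$ agrees a.e.\ with a continuous function).

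The core estimate is then a single application of Cauchy--Schwarz after inserting the weight:
\begin{equation}
    \|\widehat u\|_{L^1} = \int_{\R^2} (1+|\xi|^2)^{-\frac{r}{2}}\,(1+|\xi|^2)^{\frac{r}{2}}\,|\widehat u(\xi)|\,d\xi \leq \left(\int_{\R^2}(1+|\xi|^2)^{-r}\,d\xi\right)^{\frac12}\left(\int_{\R^2}(1+|\xi|^2)^{r}|\widehat u(\xi)|^2\,d\xi\right)^{\frac12}.
\end{equation}
The second factor is (up to a constant) $\|u\|_{H^r}$. The first factor is a fixed finite number: passing to polar coordinates, $\int_{\R^2}(1+|\xi|^2)^{-r}\,d\xi = 2\pi\int_0^{+\infty}(1+\rho^2)^{-r}\rho\,d\rho$, which converges precisely because $2r > 2$, i.e.\ $r > 1$. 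Combining the three displays yields $\|u\|_{L^\infty} \leq C_r\|u\|_{H^r}$ with $C_r$ depending only on $r$.

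The only point requiring any care — and the "main obstacle," such as it is — is the convergence of the weight integral $\int_{\R^2}(1+|\xi|^2)^{-r}\,d\xi$, which is exactly where the hypothesis $r>1$ is used and which is sharp in dimension $2$; everything else is routine. One small bookkeeping remark: since the paper works throughout with radial functions, $H^r$ here denotes the radial subspace, but this changes nothing, as it is a closed subspace of the full $H^r(\R^2)$ and the estimate above holds verbatim. I would also note that the argument simultaneously shows $\widehat u \in L^1$, which is what legitimizes the Fourier inversion step invoked at the start, so the proof is self-contained.
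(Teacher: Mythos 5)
Your proof is correct and follows the same route as the paper: bound $\|u\|_{L^\infty}$ by $\|\widehat u\|_{L^1}$, then apply Cauchy--Schwarz with the weight $\langle\xi\rangle^{\pm r}$, using $r>1$ to make $\int_{\R^2}\langle\xi\rangle^{-2r}\,d\xi$ finite. Your additional remarks (polar coordinates for the convergence check, the radial-subspace point, and the observation that the estimate itself legitimizes Fourier inversion) are harmless elaborations of the same argument.
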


\begin{proof}
    We remind the formula
    
    \begin{equation}
        \|u\|_{H^r}^2 = \int_{\R^2} |\hat{u}(\xi)|^2 <\xi>^{2r} d\xi
    \end{equation}
    
    where $\hat{u}$ is the Fourier transform of $u$. Now, we see that $\|u\|_{L^{\infty}} \leq C\|\hat{u}\|_{L^1}$ for a universal constant $C$, and we thus can write
    
    \begin{equation}
        \begin{split}
        \|u\|_{L^{\infty}} &\leq C\int_{\R^2} |\hat{u}(\xi)|d\xi\\
        &\leq C\left(\int_{\R^2} |\hat{u}(\xi)|^2 <\xi>^{2r} d\xi\right)^{\frac{1}{2}}\left(\int_{\R^2} \frac{d\xi}{<\xi>^{2r}} \right)^{\frac{1}{2}}\\
        &\leq C_r \|u\|_{H^r} 
        \end{split}
    \end{equation}
    
\end{proof}

\section{Solitons}

\myindent As stated in the introduction, the proof of theorem ~\ref{main} relies heavily on the study of \textit{solitons}, which are positive smooth solutions of the elliptic equation

\begin{equation}
    H Q + Q|Q|^2 = \lambda Q
\end{equation}

for some $\lambda \in \R$. In this section, we will prove important results on solitons.

\subsection{\textit{Existence, Uniqueness}}
\myindent This subsection is devoted to the demonstration of
\begin{proposition}\label{exunic}
    For all $\lambda > 2$ there exists a unique nonnegative and nontrivial solution $Q_{\lambda}$ to \eqref{eqsolit} in $H_x^1$. Moreover, $Q \in H_x^k$ for all $k\geq 1$. Finally, there holds $Q > 0$ on $\R^2$.
\end{proposition}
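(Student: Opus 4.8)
The plan is to prove existence by a variational argument, uniqueness by an ODE shooting/monotonicity analysis, and regularity plus positivity by elliptic bootstrapping and the maximum principle. Concretely, for existence I would fix $\lambda > 2$ and seek a minimizer of the functional
\begin{equation}
    \mathcal{E}(u) := \frac{1}{2}\int_{\R^2} \left(|\nabla u|^2 + |x|^2 |u|^2 - \lambda |u|^2\right) + \frac{1}{4}\int_{\R^2} |u|^4
\end{equation}
on $H_x^1$, or equivalently minimize the quartic term under a constraint fixing the quadratic form $\int(|\nabla u|^2 + |x|^2|u|^2 - \lambda|u|^2)$ to a negative value; the condition $\lambda > 2$ guarantees that the bottom of the spectrum of $H$, which is $2$, lies below $\lambda$, so the quadratic form is negative on the ground state $h_0$ and the constrained problem is non-empty. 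Since we work with radial functions, the embedding $H_x^1 \hookrightarrow L^4$ is compact (the potential $|x|^2$ confines mass, ruling out escape to infinity; radial symmetry plus the weight $\langle x\rangle$ from \eqref{equivalence} makes this standard), so a minimizing sequence converges strongly in $L^4$ and weakly in $H_x^1$ to a minimizer, which after replacing $u$ by $|u|$ (which does not increase $\mathcal{E}$) may be taken nonnegative; the Euler--Lagrange equation is \eqref{eqsolit} up to a Lagrange multiplier that one normalizes away by scaling the amplitude.

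For regularity, once $Q \in H_x^1$ solves $HQ = \lambda Q - Q^3$, I would bootstrap: $Q\in H_x^1 \hookrightarrow L^p$ for all finite $p$ in dimension two (via $H^1 \hookrightarrow L^p$), so $Q^3 \in L^2$ and hence the right-hand side is in $L^2$, giving $Q \in H_x^2$ by elliptic regularity for $H$; then $Q\in H_x^2 \hookrightarrow L^\infty$ by Lemma~\ref{Linfty} and the algebra property \eqref{algebra}, so $Q^3 \in H_x^2$, giving $Q\in H_x^4$; iterating, using that $H_x^r$ is an algebra for $r>1$, yields $Q \in H_x^k$ for every $k$, and $Q\in \mathcal{C}^\infty$ follows from Sobolev embedding at every order. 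For strict positivity, $Q \geq 0$ solves $-\Delta Q = (\lambda - |x|^2 - Q^2)Q$, so writing this as $-\Delta Q + c(x) Q = 0$ with $c(x) = |x|^2 + Q^2 - \lambda$ locally bounded, the strong maximum principle forbids an interior zero unless $Q\equiv 0$; since $Q$ is nontrivial, $Q>0$ everywhere.

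The main obstacle is \textbf{uniqueness}. The natural route is to pass to the radial ODE: with $r = |x|$ and $Q = Q(r)$, \eqref{eqsolit} becomes $Q'' + \frac{1}{r}Q' = (r^2 - \lambda)Q + Q^3$ with $Q'(0) = 0$ and $Q$ decaying at infinity, a two-point boundary-value problem with shooting parameter $Q(0) = a > 0$. One shows the solution $Q_a$ depends monotonically and continuously on $a$, that it decays (rather than blowing up or oscillating) only for a discrete set of $a$, and — the crux — that for the nonnegative branch the admissible $a$ is unique. I would try to establish this by a Sturm-type comparison / a sub-supersolution argument, or by the classical device of showing that if $Q_1, Q_2$ are two nonnegative solutions then $w = Q_1 - Q_2$ satisfies a linear equation whose potential, combined with the sign of $Q_1 + Q_2$, forces $w\equiv 0$ via integration against $Q_1$ and $Q_2$ (a Wronskian/ground-state-substitution argument exploiting that a nonnegative nontrivial solution is necessarily the principal eigenfunction of the linear operator $H + Q_i^2$ with eigenvalue $\lambda$, and that the principal eigenfunction is unique up to scaling while the nonlinear normalization pins the scale). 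Making this last step rigorous — showing the nonlinear constraint removes the scaling freedom — is where the care is needed; alternatively one cites the known uniqueness theory for ground states of $-\Delta u + Vu + u^3 = \lambda u$ with confining $V$.
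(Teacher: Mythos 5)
Your existence, regularity and positivity arguments are essentially the paper's: the paper minimizes the same functional $J(u)=\tfrac12\|u\|_{H_x^1}^2-\tfrac\lambda2\|u\|_{L^2}^2+\tfrac14\|u\|_{L^4}^4$ over $H_x^1$, uses the compact embedding of $H_x^1$ into $L^p$ and the $|u|$ replacement, shows nontriviality by testing with $h_0$ (this is exactly where $\lambda>2$ enters), and bootstraps regularity through the algebra property of $H_x^r$. For positivity the paper argues via ODE uniqueness in the radial variable plus a maximum-principle argument at $r=0$, while you invoke the strong maximum principle globally; both are fine.

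The genuine gap is uniqueness, which you yourself flag as the crux but do not close. Your ``integrate against $Q_1$ and $Q_2$'' device, if you run it on $w=Q_1-Q_2$ directly, produces the identity
\begin{equation}
\int_{\R^2} Q_1 Q_2\,(Q_1^2-Q_2^2)=0,
\end{equation}
whose integrand has no sign, so you cannot conclude. Your ``ground-state substitution'' idea also stalls: each $Q_i$ is indeed the principal eigenfunction of $H+Q_i^2$ with eigenvalue $\lambda$, but these are \emph{different} linear operators, so uniqueness of the principal eigenfunction for a single operator does not compare $Q_1$ with $Q_2$. The paper's missing ingredient is a \emph{monotone iteration} that manufactures a single object to integrate against. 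Starting from $\overline u=u_1+u_2$, one checks $H\overline u-\lambda\overline u+\overline u^3\geq 0$, then solves recursively $Hw_{n+1}+w_{n+1}^3=\lambda w_n$ with $w_0=\overline u$ (solvability of $Hu+u^3=f$ for $f\geq 0$ is a separate variational lemma, \eqref{eqsolv}), and shows $u_i\leq w_{n+1}\leq w_n\leq \overline u$ by testing the difference inequalities against $(\cdot)^+$. The weak limit $w$ solves the same equation as the $u_i$ and satisfies $w\geq u_i$. Now the pairing trick yields
\begin{equation}
\int_{\R^2} u_i\, w\,(w^2-u_i^2)=0,
\end{equation}
and here the integrand \emph{is} nonnegative because $w\geq u_i>0$, forcing $w=u_i$ for each $i$, hence $u_1=u_2$. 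Without constructing such a common majorant solving the same equation, your sketch does not yield a signed quantity, and the shooting/Sturm route you mention would require a substantial separate development that the paper avoids entirely.
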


\myindent This proposition ensures in particular that $Q$ has $\mathcal{C}^{\infty}$ regularity over $\R^2$ (\cite{brezis2011functional}), and $Q(x)$ and all its derivatives decay to zero as $|x|\to \infty$.

\myindent Let us start the proof with a functional analysis lemma 

\begin{lemma}\label{injcomp}
    The function space $H_x^1$ is compactly embedded into $L^p(\R^2)$ for all $2 \leq p < + \infty$
\end{lemma}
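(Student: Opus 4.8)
The plan is to exploit the key feature of $H_x^1$ that distinguishes it from the ordinary Sobolev space $H^1(\R^2)$: the weight $\langle x\rangle$ in the norm equivalence \eqref{equivalence}, which provides compactness at spatial infinity. Concretely, by \eqref{equivalence} a bounded sequence $(f_n)$ in $H_x^1$ is bounded in $H^1(\R^2)$ \emph{and} satisfies $\|\langle x\rangle f_n\|_{L^2}\leq C$ uniformly. First I would use the Rellich–Kondrachov theorem on balls: on each ball $B_R$, the embedding $H^1(B_R)\hookrightarrow L^p(B_R)$ is compact for every $2\leq p<\infty$ (recall we are in dimension $2$, so $H^1$ embeds into every $L^p$, $p<\infty$), so by a diagonal extraction over $R=1,2,3,\dots$ one gets a subsequence converging in $L^p_{\mathrm{loc}}(\R^2)$, say to some $f\in L^p_{\mathrm{loc}}$.

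Next I would upgrade this local convergence to global convergence in $L^p(\R^2)$ using the weight to control the tails. For any $R>0$ and any index $q\geq 2$ one has, on $\{|x|>R\}$, the pointwise bound $|f_n(x)|^q = |f_n(x)|^q \langle x\rangle^{0}$; splitting off the weight via Hölder, $\int_{|x|>R}|f_n|^p \leq R^{-2}\int_{|x|>R}\langle x\rangle^2|f_n|^{p}$, and then interpolating $|f_n|^p$ between $|f_n|^2$ (controlled by $\|\langle x\rangle f_n\|_{L^2}^2$) and a higher power controlled by $\|f_n\|_{H^1}$ via the $L^\infty$-type/Gagliardo–Nirenberg bounds available in $2$D, one shows $\sup_n \int_{|x|>R}|f_n|^p \to 0$ as $R\to\infty$. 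The cleanest version: write $\int_{|x|>R}|f_n|^p = \int_{|x|>R}\langle x\rangle^{-2}\big(\langle x\rangle^{2/p}|f_n|\big)^p \leq R^{-2}\int \langle x\rangle^{2}|f_n|^{p}$, and bound $\int\langle x\rangle^2|f_n|^p$ by $\||f_n|\|_{L^\infty}^{p-2}\|\langle x\rangle f_n\|_{L^2}^2$ when $p>2$ — but $L^\infty$ is not controlled by $H^1$ in $2$D, so instead I would interpolate purely in $L^q$ spaces: for $p>2$ pick $\theta\in(0,1)$ with $\tfrac1p = \tfrac\theta2 + \tfrac{1-\theta}{q}$ for a large finite $q$ with $H^1\hookrightarrow L^q$, giving $\|\langle x\rangle^{2/p} f_n\|_{L^p}\leq \|\langle x\rangle f_n\|_{L^2}^{\theta}\|f_n\|_{L^q}^{1-\theta}$ provided $2/p \le \theta$, which holds for $q$ large; this makes the tail uniformly small.

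Combining the two: given $\varepsilon>0$, choose $R$ so the tails $\int_{|x|>R}|f_n|^p$ and $\int_{|x|>R}|f|^p$ are all below $\varepsilon$, then use the $L^p(B_R)$ convergence from the diagonal extraction to make $\int_{B_R}|f_n-f|^p<\varepsilon$ for large $n$; hence $f_n\to f$ in $L^p(\R^2)$, and $f\in L^p(\R^2)$. This proves that the unit ball of $H_x^1$ is sequentially precompact in $L^p$, i.e. the embedding is compact. I expect the main obstacle to be the tail estimate: one must be careful that in dimension $2$ there is no $H^1\hookrightarrow L^\infty$ embedding, so the uniform smallness of the tails has to be extracted by an honest $L^q$ interpolation between the weighted $L^2$ bound and a high-exponent Lebesgue bound coming from $H^1$, rather than by a crude $L^\infty\cdot L^2$ split. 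Everything else — the Rellich step on balls and the diagonal argument — is standard, and the $2\leq p<\infty$ range is exactly what $H^1(\R^2)$ affords.
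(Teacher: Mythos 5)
Your overall strategy — use the $\langle x\rangle$ weight from \eqref{equivalence} to make the tails uniformly small, use Rellich--Kondrachov plus a diagonal argument for the local part, and interpolate to cover $2<p<\infty$ — is precisely the paper's strategy, just with the interpolation step woven directly into the tail estimate rather than performed afterwards. However, the specific weighted interpolation you write down has a genuine error: with $\frac1p=\frac\theta2+\frac{1-\theta}{q}$ one computes
\begin{equation*}
\theta=\frac{1/p-1/q}{1/2-1/q},\qquad\text{and}\qquad \theta\geq \frac{2}{p}\iff p\leq 2,
\end{equation*}
so for $p>2$ and any finite $q$ you in fact have $\theta<2/p$ (with $\theta\nearrow 2/p$ only as $q\to\infty$). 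Thus the pointwise factorisation $\langle x\rangle^{2/p}|f|\leq(\langle x\rangle|f|)^\theta|f|^{1-\theta}$, which needs $2/p\leq\theta$, never holds in the regime you invoke, and the claimed bound $\|\langle x\rangle^{2/p}f_n\|_{L^p}\leq\|\langle x\rangle f_n\|_{L^2}^\theta\|f_n\|_{L^q}^{1-\theta}$ does not follow. This is exactly the shadow of the $L^\infty$ obstruction you flagged: pushing the full weight $\langle x\rangle^{2}$ into the $L^p$ integral is too greedy.

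The fix is to drop the weight from the $L^p$ interpolation entirely. Estimate the tail by
\begin{equation*}
\|\mathbf 1_{\{|x|>R\}}f_n\|_{L^p}\leq\|\mathbf 1_{\{|x|>R\}}f_n\|_{L^2}^{\theta}\,\|f_n\|_{L^q}^{1-\theta}\leq\left(R^{-1}\|\langle x\rangle f_n\|_{L^2}\right)^{\theta}\|f_n\|_{L^q}^{1-\theta},
\end{equation*}
which tends to zero uniformly in $n$ as $R\to\infty$ with no constraint beyond $2<p<q$. This is in essence what the paper does, except it organises it the other way around: it first proves compactness of $H_x^1\hookrightarrow L^2$ (tail decay plus local Rellich), then upgrades to $L^p$ in one line by the interpolation $\|u_n\|_{L^p}\leq\|u_n\|_{L^2}^{t}\|u_n\|_{L^q}^{1-t}$. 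That ordering avoids the trap you fell into, since the only weighted estimate ever needed is in $L^2$, where the full weight $\langle x\rangle$ is exactly what the $H_x^1$ norm provides. Apart from this slip, your argument is sound, and the diagonal-extraction framing is a legitimate (slightly more verbose) variant of the paper's sequential-weak-convergence framing.
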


\begin{proof}
    Following for example \cite[~p.281]{brezis2011functional}, we see that $H_x^1$ is continuously embedded into $L^p$ thanks to the Sobolev inequalities (indeed, the $H_x^1$ norm in finer than the $H^1$ Sobolev norm), and the Rellich-Kondrakov theorem \cite[~P.285]{brezis2011functional} yields compact embedding into $L^2_{loc}$. We remind that
    
    \begin{equation}
        \|u\|_{H_x^1}^2 = \|\nabla u\|_{L^2}^2 + \|xu\|_{L^2}^2
    \end{equation}
    
    \myindent Assume $u_n \rightharpoonup 0$ in $H_x^1$ (weak convergence). Then we have that $(u_n)$ is bounded in $H_x^1$. Find $M > 0$ such that $\forall n, \|u_n\|_{H_x^1}\leq M$. Choose $\delta > 0$. Then for a given $R > 0$ there holds 
    
    \begin{equation}
    \int_{|x| \geq R} |u_n|^2 \leq R^{-2}\|xu_n\|_{L^2}^2 \leq M^2 R^{-2} \leq \delta
    \end{equation}
    
    should we choose $R$ large enough. Now,  as $H_x^1$ is compactly embedded into $L^2(|x|\leq R)$, we know that 
    
    \begin{equation}
        \int_{|x|\leq R}|u_n|^2 \to 0
    \end{equation} 
    
    \myindent Thus, $\limsup \|u_n\|_{L^2}^2 \leq \delta$ for arbitrary $\delta > 0$ and therefore $u_n \to 0$ in $L^2$. We may conclude that $H_x^1$ is compactly embedded into $L^2$. 
    
    \myindent Now, given $2 \leq p < + \infty$, find $p < q < \infty$ and write $p = 2t + q(1-t)$ for some $t \in (0,1)$. Then we have thanks to Hölder's inequality 
    
    \begin{equation}
        \|u_n\|_{L^p} \leq \|u_n\|_{L^2}^t\|u_n\|_{L^q}^{1-t} \to 0
    \end{equation}
    
    \myindent Indeed, $(u_n)$ is bounded in $L^q$ as it is bounded in $H^1_x$. This yields the compactness of the embedding into $L^p$
\end{proof}

\myindent Let us now state a technical lemma

\begin{lemma}\label{contnorm1}
    Given $p > 2$ and $\eps > 0$ there exists a $C_{\eps,p} > 0$ such that for all $u\in H_x^1$ the holds 
    
    \begin{equation}
        \|u\|_{L^2}^2 \leq C_{\eps,p}\|u\|_{L^p}^2 + \eps \|u\|_{H_x^1}^2
    \end{equation}
    
\end{lemma}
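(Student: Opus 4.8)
\myindent The plan is to bound $\|u\|_{L^2}^2$ by cutting $\R^2$ into a large centered ball and its complement, and to exploit the confining weight $|x|^2$ contained in the $H_x^1$ norm to absorb the exterior part. Recall that $\|u\|_{H_x^1}^2 = \|\nabla u\|_{L^2}^2 + \||x|u\|_{L^2}^2$, so that for any $R > 0$
\begin{equation}
    \int_{|x| > R} |u|^2 \leq \frac{1}{R^2}\int_{|x| > R} |x|^2 |u|^2 \leq \frac{1}{R^2}\|u\|_{H_x^1}^2 .
\end{equation}
Hence, choosing $R := \eps^{-\frac{1}{2}}$ (a quantity depending only on $\eps$) makes this exterior contribution at most $\eps \|u\|_{H_x^1}^2$, which is precisely the allowed error term.

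\myindent It then remains to handle the ball $\{|x| \leq R\}$. Since $p > 2$, the exponents $\frac{p}{2}$ and $\frac{p}{p-2}$ are conjugate, so Hölder's inequality gives
\begin{equation}
    \int_{|x| \leq R} |u|^2 \leq \left(\int_{|x|\leq R} |u|^p \right)^{\frac{2}{p}}\left(\int_{|x| \leq R} 1 \right)^{\frac{p-2}{p}} \leq \left(\pi R^2\right)^{\frac{p-2}{p}}\|u\|_{L^p}^2 .
\end{equation}
Adding the two estimates yields the claim with $C_{\eps,p} := \left(\pi / \eps\right)^{\frac{p-2}{p}}$, which depends only on $\eps$ and $p$. (By Lemma~\ref{injcomp} one has $u \in L^p$ whenever $u \in H_x^1$, so the right-hand side is always finite.)

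\myindent I do not expect a genuine obstacle here; the only point worth noticing is that it is the weighted term $\||x|u\|_{L^2}$ of the $H_x^1$ norm, and not the gradient term, that prevents mass from escaping to infinity, so no Sobolev embedding is needed. Alternatively one could argue by contradiction: were the inequality to fail for some fixed $\eps_0 > 0$, rescaling would produce a sequence $u_n$ with $\|u_n\|_{L^2} = 1$, $\|u_n\|_{H_x^1}^2 \leq \eps_0^{-1}$, and $\|u_n\|_{L^p} \to 0$; Lemma~\ref{injcomp} would then furnish a subsequence converging strongly in both $L^2$ and $L^p$ to a common limit $u$, which would have to satisfy $\|u\|_{L^2} = 1$ and $\|u\|_{L^p} = 0$, an absurdity. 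The direct argument is preferable since it produces an explicit constant.
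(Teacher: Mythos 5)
Your proof is correct and follows exactly the same route as the paper: split $\R^2$ at radius $R$, absorb the exterior with the weight $|x|^2$ hidden in the $H_x^1$ norm, and apply Hölder on the ball. The extra remarks (explicit constant, alternative compactness argument) are fine but not part of the paper's argument, which stops after the two-line decomposition.
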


\begin{proof}
    Write first
    
    \begin{equation}
        \int_{|x|\geq R} |u|^2 \leq R^{-2}\|u\|_{H_x^1}^2 \leq \eps \|u\|_{H_x^1}^2
    \end{equation}
    
    if $R > 0$ is big enough. Now, thanks to Hölder inequality 
    
    \begin{equation}
        \int_{|x|\leq R} |u|^2 \leq \left|\{|x|\leq R\}\right|^{1 - \frac{2}{p}} \|u\|_{L^p}^2 = C_{\eps}\|u\|_{L^p}^2
    \end{equation}
    
\end{proof}

\myindent Now, we are able to begin the proof of proposition \eqref{exunic}. We introduce the following functional over $H_x^1$ : for $u\in H_x^1$, set

\begin{equation}
    J(u) := \frac{1}{2}\|u\|_{H_x^1}^2 - \frac{\lambda}{2}\|u\|_{L^2}^2 + \frac{1}{4}\|u\|_{L^4}^4 = \int \frac{1}{2}\left(|\nabla u|^2 + |xu|^2\right) - \frac{\lambda}{2} |u|^2 + \frac{1}{4}|u|^4
\end{equation}

which is differentiable over $H_x^1$, its differential being the continuous linear form over $H_x^1$

\begin{equation}
    J'(u) = -\Delta u + |x|^2 u - \lambda u + u|u|^2
\end{equation}

where we can define, given $u\in H_x^1$, its distributional Laplacian by duality

\begin{equation}
    \forall g \in H_x^1, \qquad \scal{-\Delta u}{g}_{(H_x^1)' \times H_x^1} := (\nabla u,\nabla g)
\end{equation}

\myindent Our aim is to prove that $J$ realizes its minimum over $H_x^1$. Indeed if $J$ realizes its minimum at $u_0 \in H_x^1$, writing $J'(u_0) = 0$ yields that $u_0$ solves \eqref{eqsolit} in $(H_x^1)'$. 

\myindent Find a minimizing sequence $(u_n)\subset H_x^1$ for $J$. There holds

\begin{equation}
    J(u_n) \to \inf_{u\in H_x^1} J(u) \in [-\infty,+\infty)
\end{equation}

\myindent We see by choosing $\eps = \frac{1}{2\lambda}$ in lemma \eqref{contnorm1} that there exists a $C_{\eps} > 0$ such that 

\begin{equation}
    J(u)\geq \frac{1}{4}\|u\|_{H_x^1}^2 - \frac{\lambda C_{\eps}}{2}\|u\|_{L^4}^2 + \frac{1}{4}\|u\|_{L^4}^4 \geq \frac{1}{4}\|u\|_{H_x^1}^2 - C'
\end{equation}

where $C'$ is a constant. Indeed, the last inequality comes from the polynom $\frac{1}{4}x^4 - \frac{\lambda C_{\eps}}{2}x^2$ being bounded from below over $\R$.

\myindent Therefore, as $J(u_n)$ is bounded from above, $(u_n)$ is bounded in $H_x^1$. We can assume that $u_n$ converges weakly to a $Q \in  H_x^1$. Lemma \eqref{injcomp} ensures that $u_n \to Q$ strongly both in $L^2$ and in $L^4$. Moreover, 

\begin{equation}
    \|Q\|_{H_x^1} \leq \liminf \|u_n\|_{H_x^1}
\end{equation}

\myindent We therefore infer that 

\begin{equation}
    J(Q) \leq \liminf J(u_n) = \inf_{u\in H_x^1} J(u)
\end{equation}

which ensures $J$ realizes its minimum at $Q$. 

\myindent Now, if $u\in H_x^1$ then $|u| \in H_x^1$ and $J(|u|) \leq J(u)$. Indeed, this follows instantly from

\begin{lemma}[Convexity of the Dirichlet functional]\label{dirich}
    Let $u \in H^1$ (usual Sobolev space). Then $|u|\in H^1$ and there holds
    
    \begin{equation}
        \int_{\R^2} |\nabla |u| |^2 \leq \int_{\R^2} |\nabla u|^2
    \end{equation}
    
\end{lemma}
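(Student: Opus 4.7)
The plan is to handle the non-smoothness of $z\mapsto |z|$ at the origin via a standard smoothing argument. Write $u = u_1 + i u_2$ with $u_1,u_2 \in H^1(\R^2;\R)$, and for $\eps > 0$ introduce the regularization
\begin{equation}
F_{\eps}(u)(x) := \sqrt{u_1(x)^2 + u_2(x)^2 + \eps^2} - \eps.
\end{equation}
Since $z\mapsto \sqrt{|z|^2 + \eps^2} - \eps$ is smooth with bounded derivatives and vanishes at $z = 0$, the chain rule for Sobolev functions applies, giving $F_{\eps}(u) \in H^1(\R^2)$ with
\begin{equation}
\nabla F_{\eps}(u) = \frac{u_1 \nabla u_1 + u_2 \nabla u_2}{\sqrt{|u|^2 + \eps^2}}.
\end{equation}

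The key pointwise estimate follows from Cauchy-Schwarz applied to the vectors $(u_1, u_2)$ and $(\nabla u_1, \nabla u_2)$:
\begin{equation}
|\nabla F_{\eps}(u)|^2 \leq \frac{(u_1^2 + u_2^2)(|\nabla u_1|^2 + |\nabla u_2|^2)}{|u|^2 + \eps^2} \leq |\nabla u_1|^2 + |\nabla u_2|^2 = |\nabla u|^2.
\end{equation}
Integrating over $\R^2$ yields the uniform bound $\|\nabla F_{\eps}(u)\|_{L^2} \leq \|\nabla u\|_{L^2}$, and a direct dominated convergence argument shows $F_{\eps}(u) \to |u|$ in $L^2(\R^2)$ as $\eps \to 0$ (using $|F_{\eps}(u)| \leq |u|$ pointwise).

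It remains to pass to the limit. Since $\nabla F_{\eps}(u)$ is bounded in $L^2(\R^2)$, I would extract a subsequence converging weakly in $L^2$ to some vector field $v$. Testing against $\phi \in \mathcal{C}_c^{\infty}(\R^2)$ and passing to the limit in
\begin{equation}
\int_{\R^2} F_{\eps}(u)\,\partial_i \phi = -\int_{\R^2} \partial_i F_{\eps}(u)\,\phi,
\end{equation}
the left-hand side tends to $\int |u|\,\partial_i \phi$ by strong $L^2$-convergence while the right-hand side tends to $-\int v_i \phi$ by weak convergence. Hence $v$ is precisely the distributional gradient of $|u|$, so $|u| \in H^1(\R^2)$, and weak lower semicontinuity of the $L^2$ norm gives
\begin{equation}
\|\nabla |u|\|_{L^2}^2 \leq \liminf_{\eps \to 0} \|\nabla F_{\eps}(u)\|_{L^2}^2 \leq \|\nabla u\|_{L^2}^2,
\end{equation}
which is the desired inequality.

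The only delicate step is the identification of the weak $L^2$-limit of $\nabla F_{\eps}(u)$ with the distributional gradient of $|u|$; this is essentially automatic given the strong $L^2$-convergence of $F_{\eps}(u)$, but I would write it carefully since it is what upgrades the pointwise algebraic bound into an honest Sobolev statement. Everything else reduces to the pointwise Cauchy-Schwarz estimate above.
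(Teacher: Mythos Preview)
Your proof is correct and rests on the same pointwise inequality as the paper's: writing $u=u_1+iu_2$, one has $\nabla|u|=\dfrac{u_1\nabla u_1+u_2\nabla u_2}{|u|}$ a.e., and then either Cauchy--Schwarz (your choice) or the Lagrange identity
\[
|u_1\nabla u_1+u_2\nabla u_2|^2 = (u_1^2+u_2^2)(|\nabla u_1|^2+|\nabla u_2|^2) - |u_2\nabla u_1-u_1\nabla u_2|^2
\]
(the paper's choice) gives $|\nabla|u||^2\le|\nabla u|^2$. The only genuine difference is how the non-smoothness of $z\mapsto|z|$ at the origin is handled: the paper regularizes $u$ itself (``by approximation, we can assume that $u$ is regular'') and then computes directly, whereas you keep $u\in H^1$ general and instead regularize the outer function via $F_\eps(u)=\sqrt{|u|^2+\eps^2}-\eps$, passing to the limit by weak compactness and lower semicontinuity. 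Your route is slightly more self-contained, since the density-plus-limit step the paper suppresses would in any case require an argument of exactly the type you wrote out; the paper's route has the minor bonus of producing the explicit nonnegative defect term $\int\frac{|u_2\nabla u_1-u_1\nabla u_2|^2}{|u|^2}$.
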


\begin{proof}
    By approximation, we can assume that $u$ is regular. 
    
    \myindent Decompose $u$ in real and imaginary parts $u = f + ig$. Then a.e.
    
    \begin{equation}
        \nabla|u| = \nabla \sqrt{f^2 + g^2} = \frac{f\nabla f + g\nabla g}{\sqrt{f^2 + g^2}}
    \end{equation}
    \myindent Hence
    
    \begin{equation}
        \begin{split}
        \int |\nabla |u||^2 &= \int \frac{|f\nabla f + g\nabla g|^2}{f^2 + g^2} \\
        &= \int \frac{1}{f^2 + g^2} \left[f^2 |\nabla f|^2 + g^2 |\nabla g|^2 + 2 fg \nabla f \cdot \nabla g\right]\\
        &= \int |\nabla f|^2 + \int |\nabla g|^2 - \int \frac{|g\nabla f - f \nabla g|^2}{f^2 + g^2}
        \end{split}
    \end{equation}
    
    which yields the result.
\end{proof}

\myindent This proves that $|Q|\in H_x^1$, and moreover that $J(|Q|) \leq J(Q)$, and the equality holds by the minimization property. Therefore we can assume $Q$ to be real and nonnegative. Moreover, we can prove that $Q$ is non zero by showing that $J(Q) < 0$. Because $Q$ is a minimizer of $J$ it suffices to show that $J$ isn't nonnegative. Try 

\begin{equation}
    h_0(x) := \exp(-\frac{1}{2}|x|^2)
\end{equation}

 which satisfies $Hh_0 = 2h_0$. There holds

\begin{equation}
    J(th_0) = \frac{1}{2}t^2 (2-\lambda)\|h_0\|_{L^2}^2 + \frac{1}{4} t^4 \|h_0\|_{L^4}^4 
\end{equation}

\myindent Should we choose $t > 0$ small enough, the right-hand side is negative. This shows that $\inf_{H_x^1} J < 0$ which in turn ensures that $J(Q) < 0$. Let us stress here that we use the hypothesis $\lambda > 2$. Indeed, one can easily prove that \eqref{eqsolit} doesn't have a nontrivial solution in $H_x^1$ when $\lambda \leq 2$.

\myindent We then write that $J'(Q) = 0$ in $(H_x^1)'$ (because of minimization), which reads

\begin{equation}
    -\Delta Q + |x|^2 Q + Q^3 - \lambda Q = 0 \qquad \text{in} \ (H_x^1)'
\end{equation}

\myindent Now, this ensures that $H Q \in L^2$, which means $Q \in H_x^2$. Lemma \eqref{algebra} yields that $H_x^k$ is an algebra for $k\geq 2$ so $H Q$ is also in $H_x^2$, meaning $Q\in H_x^4$. We can iterate and see that 

\begin{equation}
    Q \in \bigcap_{k\geq 1} H_x^k
\end{equation}

\myindent Thus, Sobolev injections yield that $Q$ has $\mathcal{C}^{\infty}$ regularity over $\R^2$ and that $Q(x)$ along with all its derivative decay to zero as $|x|\to \infty$. Thus we have proved the existence part of \eqref{exunic}.

\quad

\myindent We now prove that $Q > 0$ on $\R^2$. Indeed, recall that $Q$ is radial so we may set $Q(x) = f(|x|)$ for some smooth $f$ on $[0,+\infty)$. $f$ is a nonnegative solution to a second order ordinary differential equation on $(0,+\infty)$. Assume there exists $r_0 > 0$ such that $f(r_0) = 0$. There holds $f'(r_0) = 0$ (as $f \geq 0$), thus Cauchy theorem yields that $f = 0$ on a neighborhood of $r_0$. Therefore, the set $\{f = 0\}$ is both open and closed in $(0,+\infty)$ thus it is either empty, in which case $Q = 0$ which is a contradiction, either it equals $(0,+\infty)$. It remains to be proven that $Q(0) \neq 0$ for which we cannot apply directly Cauchy theorem as we loose local Lipschitzness at $r = 0$. However, assume $Q(0) = 0$. Then, in a neighborhood $U$ of $0$, there holds $\lambda - |x|^2 - Q^2 > 0$ and thus

\begin{equation}
    \Delta (-Q) = (\lambda - |x|^2 - Q^2) Q \geq 0
\end{equation}

\myindent The maximum principle ensures that $-Q$ cannot have a strict local maximum at $0$ which is a contradiction. Therefore we have proven that $Q > 0$ on $\R^2$.

\quad

\myindent In order to conclude the proof of \eqref{exunic} we need to show the uniqueness of $Q$. The proof relies on the following lemma

\begin{lemma}\label{eqsolv}
    Given $f \in H_x^1$ nonnegative, there exists a nonnegative solution $u \in H_x^1$ to the equation
    
    \begin{equation}
        Hu + u|u|^2 = f
    \end{equation}
\end{lemma}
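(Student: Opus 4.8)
The plan is to obtain $u$ as a minimizer over $H_x^1$ of the functional
\[
 I(u) := \frac12\|u\|_{H_x^1}^2 + \frac14\|u\|_{L^4}^4 - \scal{f}{u},
\]
exactly paralleling the variational construction of the soliton in Proposition~\ref{exunic}, whose Euler--Lagrange equation is precisely $Hu + u|u|^2 = f$. The functional is well defined on $H_x^1$ thanks to the continuous embedding $H_x^1 \hookrightarrow L^4$ already used in Lemma~\ref{injcomp}, and it is differentiable with $I'(u) = Hu + u|u|^2 - f \in (H_x^1)'$, the term $-\Delta u$ being understood by duality exactly as in the proof of Proposition~\ref{exunic}. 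So it suffices to produce a nonnegative minimizer of $I$.

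For coercivity, I would discard the nonnegative quartic term, bound $\scal{f}{u} \le \|f\|_{L^2}\|u\|_{L^2}$ and use $H \geq 2$, which gives $\|u\|_{L^2}^2 \leq \tfrac12\|u\|_{H_x^1}^2$; together with the elementary inequality $\tfrac12 a^2 - ca \geq \tfrac14 a^2 - c^2$ this yields
\[
 I(u) \geq \frac12\|u\|_{H_x^1}^2 - \tfrac{1}{\sqrt2}\|f\|_{L^2}\|u\|_{H_x^1} \geq \frac14\|u\|_{H_x^1}^2 - \frac12\|f\|_{L^2}^2 .
\]
Hence $I$ is bounded below and a minimizing sequence $(u_n)$ is bounded in $H_x^1$; extract $u_n \rightharpoonup u$ weakly in $H_x^1$. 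By Lemma~\ref{injcomp}, $u_n \to u$ strongly in $L^2$ and in $L^4$. Weak lower semicontinuity of $\|\cdot\|_{H_x^1}$, convergence $\|u_n\|_{L^4}^4 \to \|u\|_{L^4}^4$, and continuity of the linear form $v\mapsto\scal{f}{v}$ then give $I(u) \leq \liminf_n I(u_n) = \inf_{H_x^1} I$, so $u$ is a minimizer and $I'(u) = 0$, i.e. $u$ solves $Hu + u|u|^2 = f$ in $(H_x^1)'$.

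It remains to arrange $u \geq 0$. Since $f$ is real and nonnegative, I claim $I(|u|) \leq I(u)$. By Lemma~\ref{dirich}, $|u| \in H^1$ with $\|\nabla|u|\|_{L^2} \leq \|\nabla u\|_{L^2}$, while $\||x|\,|u|\|_{L^2} = \||x|u\|_{L^2}$ and $\||u|\|_{L^4} = \|u\|_{L^4}$, so $|u| \in H_x^1$ with $\||u|\|_{H_x^1} \leq \|u\|_{H_x^1}$; moreover $\scal{f}{|u|} = \int_{\R^2} f|u| \geq \int_{\R^2} f\,\mathrm{Re}(u) = \scal{f}{u}$ because $|u| \geq \mathrm{Re}(u)$ pointwise and $f \geq 0$. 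Hence $I(|u|) \leq I(u)$, so $|u|$ is also a minimizer; replacing $u$ by $|u|$ produces the desired nonnegative solution. The computations above are routine; the one genuinely delicate point is the passage to the limit in the quartic term, which requires the \emph{strong} $L^4$ convergence of the minimizing sequence — this is exactly where the compactness of the embedding $H_x^1 \hookrightarrow L^4$ (Lemma~\ref{injcomp}) is indispensable, and I would regard it as the crux of the argument.
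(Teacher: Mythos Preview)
Your proof is correct and follows essentially the same variational route as the paper: minimize the functional $\frac12\|u\|_{H_x^1}^2 + \frac14\|u\|_{L^4}^4 - \scal{f}{u}$, use the compact embedding $H_x^1\hookrightarrow L^2,L^4$ (Lemma~\ref{injcomp}) to pass to the limit, and replace $u$ by $|u|$ via Lemma~\ref{dirich} together with $f\geq 0$. The only cosmetic difference is in the coercivity estimate: the paper pairs $f$ with $u$ in $L^{4/3}$--$L^4$ and absorbs into the quartic term, whereas you pair in $L^2$ and use the spectral gap $H\geq 2$ to absorb into the $H_x^1$ term; both are equally valid.
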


\begin{proof}
    Given $u \in H_x^1$, set
    
    \begin{equation}
        J_f(u) := \frac{1}{2}\int |\nabla u|^2 + |\cdot|^2 |u|^2  + \frac{1}{4}\int |u|^4 - \int fu = \frac{1}{2}\|u\|_x^2 + \frac{1}{4}\|u\|_4^4 - \scal{f}{u}
    \end{equation}
    
    \myindent We see that $J_f$ is differentiable over $H_x^1$, with differential 
    \begin{equation}
        J_f'(u) = Hu + u|u|^3 - f \in (H_x^1)'
    \end{equation}
    
    \myindent Moreover, $J_f$ is bounded from below as there holds
    
    \begin{equation}
        J_f(u) \geq \frac{1}{2}\|u\|_{H_x^1}^2 + \frac{1}{4}\|u\|_{L^4}^2 - \|f\|_{L^{\frac{4}{3}}}\|u\|_{L^4} \geq \inf_{x\in \R} \left(\frac{1}{4}x^2 - \|f\|_{L^{\frac{4}{3}}} x\right) > -\infty
    \end{equation}
    
    \myindent Find $(u_n)\subset H_x^1$ a minimizing sequence for $J_f$. $(u_n)$ is bounded in $H_x^1$ thus, without loss of generality, converges weakly towards $u \in H_x^1$. Lemma \eqref{injcomp} ensures that $u_n \to u$ in $L^2$ and in $L^4$. We get using $\|u\|_{H_x^1}\leq \liminf \|u_n\|_{H_x^1}$ that
    
    \begin{equation}
        J_f(u) \leq \liminf J_f(u_n) = \inf_{H_x^1} J_f
    \end{equation}
    
    which proves that $u$ minimizes $J_f$ over $H_x^1$. Using lemma \eqref{dirich} and $f\geq0$ there holds $J_f(|u|) \leq J_f(u)$ so we can always assume that $u \geq 0$. Write finally $J_f'(u) = 0$
    \end{proof}

\myindent Suppose now that there are two nontrivial solutions $u_1\geq 0$ and $u_2\geq 0$ to equation \eqref{eqsolit}. Set $\overline{u} = u_1 + u_2$. We then have

\begin{equation}
    H\bar{u} -\lambda \bar{u} + \bar{u}^3 \geq H\bar{u} -\lambda \bar{u} + u_1^3 + u_2^3 = 0
\end{equation}

\myindent We set $w_0 = \overline{u}$ then recursively find, using \eqref{eqsolv}, $w_{n+1}\in H_x^1$ a nonnegative solution to

\begin{equation}
    Hw_{n+1} + w_{n+1}^3 = \lambda w_n
\end{equation}

\myindent We show recursively that for all $n$ there holds (in the distributional sense therefore almost everywhere)

\begin{equation}
    u_i \leq w_{n+1} \leq w_n \leq \overline{u}\qquad i = 1,2
\end{equation}

\myindent Indeed, we can write

\begin{equation}
    Hw_1 + w_1^3 = \lambda w_0 \leq Hw_0 + w_0^3
\end{equation}

\myindent Test this inequality against the nonnegative $H_x^1$ function $(w_1 - w_0)^+$ (where $x^+ := \max(0,x)$)

\begin{equation}
    \int |\nabla(w_1 - w_0)^+|^2 + |\cdot|^2 |(w_1 - w_0)^+|^2 + |w_1^3 - w_0^3|(w_1 - w_0)^+ \leq 0
\end{equation}

\myindent Necessarily there holds $(w_1 - w_0)^+ = 0$ i.e. $w_1 \leq w_0$. We can also write 

\begin{equation}
    Hw_1 + w_1^3 \geq \lambda u_i = Hu_i + u_i^3
\end{equation}

\myindent Testing this inequality against the nonnegative $H_x^1$ function $(u_i - w_1)^+$ yields the other inequality. We can iterate this reasoning because there holds 

\begin{equation}
    Hw_1 + w_1^3 -\lambda w_1 = \lambda(w_0 - w_1) \geq 0
\end{equation}

\myindent Let a subsequence of $(w_n)$ weakly converge to a $w \in H_x^1$ : there holds $Hw - \lambda w + w^3 = 0$ in $(H_x^1)'$, and $u_i\leq w$ for $i = 1,2$. Therefore,

\begin{equation}
    \begin{split}
    \scal{Hu_i}{w} -\lambda \scal{u_i}{w} + \int u_i^3 w &= 0\\
    \scal{Hw}{u_i} -\lambda \scal{w}{u_i} + \int w^3 u_i &= 0
    \end{split}
\end{equation}

\myindent Subtracting yields 

\begin{equation}
    \int u_i w (w^2 - u_i^2) = 0
\end{equation}

the integrand being nonnegative. Moreover, as we have proven that there holds $u_i > 0$, we can conclude that $u_i = w$ for $i = 1,2$ and thus that $u_1 = u_2$.
 
\subsection{\textit{Differentiability in $\lambda$}}

\myindent Let us study the regularity of the function 

\begin{equation}
    \lambda \in (2, +\infty) \to Q_{\lambda} \in H_x^1
\end{equation}

where $Q_{\lambda}$ is the unique positive solution to $HQ_{\lambda} + Q_{\lambda}^3 = \lambda Q_{\lambda}$ given by \eqref{exunic}.

\myindent We will prove the following result

\begin{proposition}\label{deriv}
    The function $\lambda \to Q_{\lambda}$ is differentiable as a function from $(2, +\infty)$ to $H_x^1$
\end{proposition}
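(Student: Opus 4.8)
The plan is to obtain the local curve $\lambda\mapsto Q_\lambda$ from the implicit function theorem and then to identify it with the minimizer of Proposition \ref{exunic}. Since all functions involved are real and radial, I work with real-valued functions, so that $u|u|^2=u^3$, and I set
\[
	F\colon (2,+\infty)\times H_x^3 \longrightarrow H_x^1, \qquad F(\lambda,u) := Hu + u^3 - \lambda u .
\]
First I would check that $F$ is well defined and smooth: $H$ is an isometric isomorphism $H_x^{3}\to H_x^{1}$ (since $\|Hu\|_{H_x^1}=\|u\|_{H_x^3}$ and $H$ is invertible on $L^2$), multiplication by $\lambda$ is bounded, and $u\mapsto u^3$ is a bounded cubic polynomial map on $H_x^3$ (recall $H_x^r$ is an algebra for $r>1$), hence with values in $H_x^1$. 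Thus $F$ is the sum of a bounded linear map and a bounded polynomial map, in particular real-analytic, with $D_uF(\lambda,u)h = Hh + 3u^2 h - \lambda h$. By Proposition \ref{exunic} we have $Q_\lambda\in H_x^3$ and $F(\lambda,Q_\lambda)=0$, so, fixing $\lambda_0>2$, it suffices to show that
\[
	L_{\lambda_0} := D_uF(\lambda_0,Q_{\lambda_0}) = H - \lambda_0 + 3Q_{\lambda_0}^2
\]
is an isomorphism $H_x^3\to H_x^1$; the statement for $\lambda\mapsto Q_\lambda\in H_x^1$ then follows a fortiori from the (stronger) $C^1$ regularity in $H_x^3$, since $H_x^3\hookrightarrow H_x^1$ continuously.

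The heart of the matter is the non-degeneracy of $L_{\lambda_0}$. Since $Q_{\lambda_0}$ and its derivatives decay, $3Q_{\lambda_0}^2$ is a bounded self-adjoint multiplication operator, so $L_{\lambda_0}$ is self-adjoint on $L^2$ with domain $H_x^2$, has compact resolvent and hence discrete spectrum. The key point is the factorization $L_{\lambda_0} = L_{\lambda_0}^- + 2Q_{\lambda_0}^2$ with $L_{\lambda_0}^- := H - \lambda_0 + Q_{\lambda_0}^2$: the soliton equation gives $L_{\lambda_0}^- Q_{\lambda_0} = HQ_{\lambda_0} - \lambda_0 Q_{\lambda_0} + Q_{\lambda_0}^3 = 0$, so $Q_{\lambda_0}$ is an eigenfunction of $L_{\lambda_0}^-$ for the eigenvalue $0$; being strictly positive (Proposition \ref{exunic}), it is the ground state of the confining Schrödinger operator $L_{\lambda_0}^-$, whence $L_{\lambda_0}^-\geq 0$. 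Consequently, for every $h\neq 0$,
\[
	\scal{L_{\lambda_0}h}{h} = \scal{L_{\lambda_0}^- h}{h} + 2\int_{\R^2} Q_{\lambda_0}^2\, h^2 \;\geq\; 2\int_{\R^2} Q_{\lambda_0}^2\, h^2 \;>\;0 ,
\]
since $Q_{\lambda_0}>0$ everywhere. Hence $0$ is not an eigenvalue, and since the spectrum is discrete and the quadratic form is nonnegative, $\inf\operatorname{spec}(L_{\lambda_0})>0$, so $L_{\lambda_0}\colon H_x^2\to L^2$ is an isomorphism. A one-step elliptic bootstrap on the identity $Hh = f + \lambda_0 h - 3Q_{\lambda_0}^2 h$ (using that $h\mapsto Q_{\lambda_0}^2 h$ maps $H_x^2$ into $H_x^1$) upgrades this to an isomorphism $L_{\lambda_0}\colon H_x^3\to H_x^1$.

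The implicit function theorem then provides an interval $U\ni\lambda_0$, a neighbourhood $V\ni Q_{\lambda_0}$ in $H_x^3$ and a real-analytic map $\lambda\in U\mapsto\widetilde Q_\lambda\in V$ with $\widetilde Q_{\lambda_0}=Q_{\lambda_0}$, such that $\widetilde Q_\lambda$ is the unique zero of $F(\lambda,\cdot)$ in $V$. It remains to identify $\widetilde Q_\lambda$ with $Q_\lambda$, i.e. to show $\widetilde Q_\lambda\geq 0$ for $\lambda$ near $\lambda_0$: nontriviality is automatic because $\widetilde Q_\lambda\to Q_{\lambda_0}\neq 0$ in $H_x^3\hookrightarrow L^2$, and the uniqueness in Proposition \ref{exunic} then forces $\widetilde Q_\lambda=Q_\lambda$. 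Here one uses that $H_x^3\hookrightarrow C^0(\R^2)$ with vanishing at infinity and that $\widetilde Q_\lambda$ is smooth by elliptic regularity. If $\widetilde Q_\lambda$ were negative somewhere it would attain a negative minimum at some point $x_\lambda$; evaluating $-\Delta\widetilde Q_\lambda + (|x|^2-\lambda)\widetilde Q_\lambda + \widetilde Q_\lambda^3 = 0$ at $x_\lambda$ gives, as soon as $|x_\lambda|^2\geq\lambda$, a sum of three terms each $\leq 0$ with the last strictly negative, which is impossible; hence $|x_\lambda|^2<\lambda$, so $x_\lambda$ remains in the fixed ball $\{|x|^2\leq\lambda_0+1\}$, on which $\widetilde Q_\lambda\to Q_{\lambda_0}>0$ uniformly, contradicting $\widetilde Q_\lambda(x_\lambda)<0$ once $\lambda$ is close enough to $\lambda_0$. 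Therefore $\widetilde Q_\lambda\geq 0$, hence $\widetilde Q_\lambda=Q_\lambda$, and $\lambda\mapsto Q_\lambda$ is real-analytic — in particular differentiable — near $\lambda_0$; since $\lambda_0$ was arbitrary, this proves the proposition. I expect the main obstacle to be the non-degeneracy of $L_{\lambda_0}$: the crucial (and classical, in soliton theory) insight is the splitting $L_{\lambda_0}=L_{\lambda_0}^-+2Q_{\lambda_0}^2$ together with the ground-state property of $Q_{\lambda_0}$ yielding $L_{\lambda_0}^-\geq 0$; the positivity-preservation step is a comparatively minor technical point.
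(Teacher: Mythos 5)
Your proof is correct, but it takes a genuinely different route from the paper's. The paper does not invoke the implicit function theorem: it first establishes continuity of $\lambda\mapsto Q_\lambda$ by weak compactness and uniqueness of the minimizer, then proves injectivity of $L_+=H+3Q_\lambda^2-\lambda$ on $H_x^1$ (its Lemma~\ref{uniqueness}) via Kato's inequality $\Delta u_+\geq 1_{u>0}\Delta u$ and a pairing with $Q_\lambda$, and finally shows directly that the difference quotients $\tau(h)=(Q_{\lambda+h}-Q_\lambda)/h$ are bounded in $H_x^1$ and converge (using compact embedding and the injectivity lemma to rule out blow-up). You instead establish not just injectivity but positive-definiteness of $L_+$ through the classical soliton-theory splitting $L_+=L_-+2Q_\lambda^2$, with $L_-\geq 0$ because $Q_\lambda>0$ is its ground state, and then run the implicit function theorem on $F(\lambda,u)=Hu+u^3-\lambda u:(2,\infty)\times H_x^3\to H_x^1$, followed by a maximum-principle identification of the IFT branch with $Q_\lambda$. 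The trade-offs: the paper's Kato-inequality argument is more hands-on and is explicitly noted by the author to have the virtue of working for all $\lambda>2$ (the author contrasts it with the perturbative spectral argument of their Lemma~\ref{spec}, which only works near $\lambda=2$ because it needs $\|3Q_\lambda^2-\eps\|<2$); your ground-state argument for $L_-\geq 0$ also works for all $\lambda>2$ and in addition delivers coercivity, not just injectivity, hence real-analyticity of $\lambda\mapsto Q_\lambda$ in $H_x^3$ for free, which is strictly stronger than the differentiability in $H_x^1$ that the paper claims. The price you pay is the extra identification step ($\widetilde Q_\lambda=Q_\lambda$), which you handle correctly via a pointwise minimum-principle argument on the fixed compact ball $\{|x|^2\leq\lambda_0+1\}$ and the uniqueness in Proposition~\ref{exunic}.
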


\myindent It is noteworthy that the proof can easily be iterated, showing the $\mathcal{C}^{\infty}$ regularity of this function. However, we will not need this result.

\myindent Let us begin with

\begin{lemma}\label{normincr}
    The function $\lambda \to \|Q_{\lambda}\|_{L^2}$ is increasing over $(2,+\infty)$
\end{lemma}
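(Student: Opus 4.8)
The plan is to avoid comparing $Q_{\lambda_1}$ and $Q_{\lambda_2}$ pointwise (the map $u\mapsto Hu+u^3-\lambda u$ is not monotone, so a sub/supersolution argument would force us to redo the iteration scheme from the uniqueness proof) and instead argue variationally through the ground state energy. Recall from the proof of Proposition \ref{exunic} that, for $\lambda>2$, the function $Q_\lambda$ is a global minimizer over $H_x^1$ of
\[
J_\lambda(u)=\tfrac12\|u\|_{H_x^1}^2+\tfrac14\|u\|_{L^4}^4-\tfrac\lambda2\|u\|_{L^2}^2 ,
\]
and that this infimum is finite and negative. Set $e(\lambda):=J_\lambda(Q_\lambda)=\inf_{u\in H_x^1}J_\lambda(u)\in(-\infty,0)$. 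The key structural observation is that for fixed $u$ the map $\lambda\mapsto J_\lambda(u)$ is affine, and more precisely $J_{\lambda_2}(u)=J_{\lambda_1}(u)-\tfrac{\lambda_2-\lambda_1}{2}\|u\|_{L^2}^2$.

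Fix $2<\lambda_1<\lambda_2$. Testing $J_{\lambda_2}$ against the competitor $Q_{\lambda_1}$ and $J_{\lambda_1}$ against the competitor $Q_{\lambda_2}$ gives the two inequalities
\[
e(\lambda_2)\leq e(\lambda_1)-\tfrac{\lambda_2-\lambda_1}{2}\|Q_{\lambda_1}\|_{L^2}^2,
\qquad
e(\lambda_1)\leq e(\lambda_2)+\tfrac{\lambda_2-\lambda_1}{2}\|Q_{\lambda_2}\|_{L^2}^2 .
\]
Adding these and dividing by $\tfrac{\lambda_2-\lambda_1}{2}>0$ immediately yields $\|Q_{\lambda_1}\|_{L^2}^2\leq\|Q_{\lambda_2}\|_{L^2}^2$, i.e. monotonicity in the weak sense. (This is just the statement that the concave function $e$ has non-increasing slope, with $-\tfrac12\|Q_\lambda\|_{L^2}^2$ playing the role of a subgradient.)

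The only real work is upgrading $\leq$ to $<$. Suppose $\|Q_{\lambda_1}\|_{L^2}=\|Q_{\lambda_2}\|_{L^2}$; then both displayed inequalities must be equalities, and in particular $e(\lambda_2)=J_{\lambda_2}(Q_{\lambda_1})$, so $Q_{\lambda_1}$ is itself a minimizer of $J_{\lambda_2}$. Since $J_{\lambda_2}(Q_{\lambda_1})=e(\lambda_2)<0=J_{\lambda_2}(0)$ it is nontrivial, and since $Q_{\lambda_1}>0$ it is nonnegative; writing $J_{\lambda_2}'(Q_{\lambda_1})=0$ shows it solves $HQ+Q^3=\lambda_2 Q$. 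The uniqueness clause of Proposition \ref{exunic} then forces $Q_{\lambda_1}=Q_{\lambda_2}$, and subtracting the two elliptic equations $HQ_{\lambda_i}+Q_{\lambda_i}^3=\lambda_i Q_{\lambda_i}$ gives $(\lambda_1-\lambda_2)Q_{\lambda_1}=0$, hence $Q_{\lambda_1}\equiv0$, contradicting nontriviality. Therefore $\|Q_{\lambda_1}\|_{L^2}<\|Q_{\lambda_2}\|_{L^2}$, which is the claim.

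I expect the strictness step to be the main (mild) obstacle: the non-strict bound is a one-line consequence of the affine-in-$\lambda$ structure, but ruling out a flat stretch of $\lambda\mapsto\|Q_\lambda\|_{L^2}$ is exactly where the uniqueness of the positive ground state is needed, via the remark that equality in the competitor inequalities says $Q_{\lambda_1}$ is also a ground state at level $\lambda_2$.
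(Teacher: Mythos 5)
Your proof is correct and follows essentially the same variational route as the paper: both use that $Q_\lambda$ is the unique positive minimizer of $J_\lambda$ and that $\lambda\mapsto J_\lambda(u)$ is affine in $\lambda$, then cross-test $Q_{\lambda_1}$ and $Q_{\lambda_2}$ against the two functionals and combine the resulting inequalities. The only presentational difference is that the paper asserts the two cross-test inequalities as strict from the outset (implicitly using $Q_{\lambda_1}\neq Q_{\lambda_2}$), whereas you first derive the non-strict bound and then rule out equality by observing it would make $Q_{\lambda_1}$ a ground state at level $\lambda_2$; your version makes explicit a small step the paper leaves implicit.
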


\begin{proof}
    Given its construction, $Q_{\lambda}$ is the unique positive minimizer of 
    
    \begin{equation}
        J(\lambda) : u \to \|u\|_{H_x^1}^2 + \frac{2}{4} \|u\|_{L^4}^4 - \lambda \|u\|_{L^2}^2
    \end{equation}
    
    \myindent Therefore we can write, given $\mu < \lambda$ 
    
    \begin{equation}
    \begin{split}
    \|Q_{\mu}\|_{H_x^1}^2 + \frac{2}{4}\|Q_{\mu}\|_{L^4}^4 - \mu \|Q_{\mu}\|_{L^2}^2 &< \|Q_{\lambda}\|_{H_x^1}^2 + \frac{2}{4}\|Q_{\lambda}\|_{L^4}^4 - \mu\|Q_{\lambda}\|_{L^2}^2\\
    &=  \|Q_{\lambda}\|_{H_x^1}^2 + \frac{2}{4}\|Q_{\lambda}\|_{L^4}^4 - \lambda\|Q_{\lambda}\|_{L^2}^2 + (\lambda - \mu)\|Q_{\lambda}\|_{L^2}^2 \\
    &< \|Q_{\mu}\|_{H_x^1}^2 + \frac{2}{4}\|Q_{\mu}\|_{L^4}^4- \lambda \|Q_{\mu}\|_{L^2}^2 + (\lambda - \mu)\|Q_{\lambda}\|_{L^2}^2
    \end{split}
\end{equation}

which reads 

\begin{equation}
    \forall \mu < \lambda\qquad (\lambda - \mu)(\|Q_{\lambda}\|_{L^2}^2 - \|Q_{\mu}\|_{L^2}^2) > 0
\end{equation}
\end{proof}

\myindent Thus, as $\|Q_{\lambda}\|_{H_x^1}^2 + \|Q_{\lambda}\|_{L^4}^4 = \lambda \|Q_{\lambda}\|_{L^2}^2$, we see that $\lambda \to Q_{\lambda}$ is locally bounded in $H_x^1$ norm.

\quad

\myindent Now, we prove that $\lambda \to Q_{\lambda}$ is continuous. Given $\lambda > 2$, find $\lambda_n \to \lambda$. We know that $(Q_{\lambda_n})$ is a bounded sequence in $H_x^1$, therefore it is weakly compact. If $f \in H_x^1$ is a weak limit point, there holds in $(H_x^1)'$ 

\begin{equation}
    -\Delta f + |x|^2 f + f^3 = \lambda f
\end{equation}

and $f$ is nonnegative. The uniqueness proved above ensures that $f$ is either equal to $Q_{\lambda}$ or $0$. However the latter cannot happen because lemma \eqref{injcomp}  yields that $\|f\|_{L^2}^2 = \lim \|Q_{\lambda_n}\|_{L^2}^2 \geq \|Q_{\lambda - \eps}\|_{L^2}^2 > 0$ (we use lemma \eqref{normincr}). This ensures that the only limit point of $Q_{\lambda_n}$ is $Q_{\lambda}$, thus 

\begin{equation}
    Q_{\lambda_n} \rightharpoonup Q_{\lambda} \qquad \text{weakly in}\ H_x^1
\end{equation}

\myindent Thanks to lemma \eqref{injcomp}, there holds moreover

\begin{equation}
    Q_{\lambda_n}^3 - \lambda_n Q_{\lambda_n} \to Q_{\lambda}^3 - \lambda Q_{\lambda}\qquad \text{strongly in} \ L^2
\end{equation}

\myindent However we can write 

\begin{equation}
    Q_{\lambda_n} = H^{-1}(Q_{\lambda_n}^3 -  \lambda_nQ_{\lambda_n})
\end{equation}

and using the continuity of $H^{-1}$ from $L^2$ into $H_x^1$ there holds

\begin{equation}
    Q_{\lambda_n} \to Q_{\lambda}\qquad \text{strongly in} \ H_x^1
\end{equation}

thus concluding that $\lambda \to Q_{\lambda}$ is continuous from $(2,+\infty)$ to $H_x^1$

\quad

\myindent Let us now establish that $\lambda \to Q_{\lambda}$ is differentiable.

\myindent We will use the following lemma

\begin{lemma}\label{uniqueness}
    Suppose that $u\in H_x^1$ is a solution to 
    
    \begin{equation}
        -\Delta u + |x|^2 u + 3Q_{\lambda}^2 u = \lambda u \qquad \text{in}\ (H_x^1)'
    \end{equation}
    
    \myindent Then necessarily $u = 0$
\end{lemma}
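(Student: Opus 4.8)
The plan is to use that, by construction, the soliton $Q := Q_{\lambda}$ is the ground state of the \emph{linear} operator $\mathcal{L}_{-} := H + Q^{2} - \lambda$, and to compare it with the operator in the statement, $\mathcal{L}_{+} := H + 3Q^{2} - \lambda$, via the elementary identity $\mathcal{L}_{+} = \mathcal{L}_{-} + 2Q^{2}$. Since $Q \in \bigcap_{k} H_x^{k}$, Lemma \ref{Linfty} gives $Q \in L^{\infty}$, so $Q^{2}-\lambda$ and $3Q^{2}$ are bounded multiplication operators and $\mathcal{L}_{\pm}$ are self-adjoint on $H_x^{2}$, bounded below, with compact resolvent, their quadratic forms being finite on $H_x^{1}$. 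The soliton equation \eqref{eqsolit} reads exactly $\mathcal{L}_{-}Q = 0$ (recall $Q\geq 0$, so $Q|Q|^{2}=Q^{3}$). The heart of the matter is to upgrade this to the form inequality $\mathcal{L}_{-}\geq 0$ on $H_x^{1}$. Once that is in hand, the lemma follows at once: if $u \in H_x^{1}$ solves $\mathcal{L}_{+}u=0$ in $(H_x^{1})'$, then $Hu = \lambda u - 3Q^{2}u \in L^{2}$, so $u\in H_x^{2}$ and pairing the equation against $u$ is legitimate and yields
\[
0 = \scal{\mathcal{L}_{+}u}{u} = \scal{\mathcal{L}_{-}u}{u} + 2\int_{\R^{2}} Q^{2}|u|^{2} \geq 2\int_{\R^{2}} Q^{2}|u|^{2}\geq 0,
\]
whence $\int_{\R^{2}}Q^{2}|u|^{2}=0$ and, since $Q>0$ on $\R^{2}$ by Proposition \ref{exunic}, $u\equiv 0$.

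It therefore remains to establish $\mathcal{L}_{-}\geq 0$. I would let $\mu_{0}:=\inf_{\|v\|_{L^{2}}=1}\scal{\mathcal{L}_{-}v}{v}$ be the bottom of the spectrum; the compact resolvent makes the infimum an attained eigenvalue, with eigenfunction $\phi\in H_x^{2}$, which — the potential term $\int(|x|^{2}+Q^{2}-\lambda)|\phi|^{2}$ being unchanged under $\phi\mapsto|\phi|$ — may be taken nonnegative and nontrivial by Lemma \ref{dirich}. Elliptic regularity makes $\phi$ smooth, and a strong maximum principle argument, run on each ball $B_{R}$ (rewriting the equation as $-\Delta\phi + (|x|^{2}+Q^{2}-\mu_{0}+k_{R})\phi = k_{R}\phi\geq 0$ with $k_{R}\geq 0$ large enough that the zeroth order coefficient is nonnegative on $B_{R}$), forces $\phi>0$ everywhere. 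Since $\mathcal{L}_{-}Q=0$ we get $\mu_{0}\leq \scal{\mathcal{L}_{-}Q}{Q}/\|Q\|_{L^{2}}^{2}=0$; and if $\mu_{0}<0$ then $Q$ and $\phi$ would be eigenfunctions of the self-adjoint operator $\mathcal{L}_{-}$ for the \emph{distinct} eigenvalues $0$ and $\mu_{0}$, hence orthogonal, contradicting $\scal{Q}{\phi}=\int_{\R^{2}}Q\phi>0$. So $\mu_{0}=0$, i.e. $\scal{\mathcal{L}_{-}v}{v}\geq 0$ for every $v\in H_x^{1}$.

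The genuinely delicate step is the strict positivity of the ground state $\phi$: the maximum principle must be applied locally because $|x|^{2}+Q^{2}-\lambda$ changes sign, so one works ball by ball with a shifted potential as above. Everything else is bookkeeping — checking that the $(H_x^{1})'$–$H_x^{1}$ pairings are finite and unfold to the expected integrals, which follows from $Q\in L^{\infty}$, the definition of the distributional Laplacian recalled above, and the equivalence \eqref{equivalence}. An alternative route to $\mathcal{L}_{-}\geq 0$ is the ground state substitution $v=Q\psi$, which formally gives $\scal{\mathcal{L}_{-}v}{v}=\int_{\R^{2}}Q^{2}|\nabla\psi|^{2}\geq 0$; but justifying it for all $v\in H_x^{1}$ requires handling the decay of $Q$, so the eigenvalue comparison is cleaner.
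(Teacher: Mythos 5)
Your proof is correct and takes a genuinely different route from the paper's. The paper argues directly via Kato's inequality: it takes $u$ real WLOG, establishes $\Delta(u_+) \geq 1_{u>0}\Delta u$, pairs the resulting distributional inequality against the positive soliton $Q_{\lambda}$, and using $HQ_{\lambda}+Q_{\lambda}^3=\lambda Q_{\lambda}$ obtains $2\int u_+Q_{\lambda}^3 \leq 0$, whence $u\leq 0$; a second pairing of the equation itself against $Q_{\lambda}$ then yields $\int uQ_{\lambda}^3 = 0$ and $u\equiv 0$. You instead establish the structural fact $\mathcal{L}_- = H + Q^2 - \lambda \geq 0$ (i.e.\ $Q$ is the nonnegative ground state of $\mathcal{L}_-$ at eigenvalue zero) via the standard Perron--Frobenius argument — a nonnegative ground state $\phi$, strict positivity by a local maximum principle, and orthogonality against $Q>0$ ruling out $\mu_0<0$ — and then conclude from $\mathcal{L}_+ = \mathcal{L}_- + 2Q^2$ and $Q>0$.

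The trade-offs are worth noting. Your route is more conceptual: it isolates and proves, for \emph{every} $\lambda>2$, the positivity of $H_-$ that the paper only asserts later (in Section 5.2, via Lemma \ref{spec}) for $\lambda = 2+\eps$ small. That is a nice byproduct and matches what the paper's remark hints at (``for $\lambda = 2+\eps$ this lemma would follow from spectral theory''), except that your argument requires no smallness. The paper's Kato-inequality argument, on the other hand, is shorter, needs no maximum-principle interior analysis and no simplicity/orthogonality bookkeeping, and requires nothing about $\mathcal{L}_-$ beyond the soliton equation and $Q>0$; it is the cheaper proof if one only wants this one lemma. A couple of small points in your write-up: the shifted potential should read $|x|^2 + Q^2 - \lambda - \mu_0 + k_R$ (you dropped the $-\lambda$); and you should say a word about the complex-valued case (decompose $u=u_1+iu_2$, the equation is $\R$-linear, apply the real form inequality to each component), as the paper does explicitly via ``we can assume $u$ is real-valued.'' Neither affects the substance.
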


\myindent It is rather interesting that with the notations we will define below, for $\lambda = 2 + \eps$ with $\eps > 0$ small enough, this lemma will be a straightforward consequence of the inversibility of the operator $H_+ := H + 3Q_{\lambda}^2 - \lambda$, which we will prove using spectral theory. The proof here is of a very different nature and holds when $\lambda$ isn't near $2$.

\begin{proof}
    Take $u\in H_x^1$ a solution. As the equation is the same on the real and imaginary parts of $u$ we can assume $u$ is real-valued. Then, $u_+ := \max(0,u) \in H_x^1$ (indeed it is enough to know that $|u|\in H_x^1$, see \eqref{dirich}). Moreover, as distributions
    
    \begin{equation}\label{kato}
        \Delta(u_+) \geq 1_{u > 0} \Delta u
    \end{equation}
    
    where $1_{u> 0} = 0$ if $u \leq 0$ and $1$ ortherwise. Let us postpone the proof of this inequality. There holds
    
    \begin{equation}
    \begin{split}
    -\Delta\left(u_+\right) &\leq 1_{u > 0} \left(-\Delta u\right) \\
    &= 1_{u > 0}\left(-|\cdot|^2 u-3Q_{\lambda}^2 u+ \lambda u\right) \\
    &= -|\cdot|^2 u_+ -3Q_{\lambda}^2 u_+ + \lambda u_+
    \end{split}
\end{equation}

\myindent We can apply this distributional inequality to the positive function $Q_{\lambda}$ :

\begin{equation}
    \begin{split}
    \lambda\scal{u_+}{Q_{\lambda}} - 3 \scal{Q_{\lambda}^2 u_+}{Q_{\lambda}} &\geq \scal{H(u_+)}{Q_{\lambda}} \\
    &= \scal{u_+}{HQ_{\lambda}}\\
    &= \lambda \scal{u_+}{Q_{\lambda}} - \scal{u_+}{Q_{\lambda}^3}
    \end{split}
\end{equation}

which yields

\begin{equation}
    2\int_{\R^2} u_+ Q_{\lambda}^3 \leq 0
\end{equation}

\myindent As $Q_{\lambda}$ is positive and $u_+$ nonnegative, this ensures $u_+ = 0$ i.e.

\begin{equation}
    u \leq 0
\end{equation}

\myindent We then write 

\begin{equation}
    \begin{split}
    \lambda\scal{u}{Q_{\lambda}} - 3 \scal{Q_{\lambda}^2 u}{Q_{\lambda}} &= \scal{H(u)}{Q_{\lambda}} \\
    &= \scal{u}{HQ_{\lambda}}\\
    &= \lambda \scal{u}{Q_{\lambda}} - \scal{u}{Q_{\lambda}^3}
    \end{split}
\end{equation}

which reads

\begin{equation}
    2\int_{\R^2} uQ_{\lambda}^3 = 0
\end{equation}

\myindent Thus we finally have $u = 0$
\end{proof}

\myindent We now prove inequality \eqref{kato}. Following Kato's inequality \cite{kato1972schrodinger} we know that 

\begin{equation}
    \Delta |u| \geq sgn(u)\Delta(u)
\end{equation}

\myindent Where $sgn(u)$ is the sign of $u$, i.e. $-1$ when $u< 0$, $0$ when $u = 0$, and $1$ when $u > 0$. We can write $u_+ = \frac{1}{2}\left(u + |u|\right)$ and conclude using $\frac{1}{2}(1 + sgn(u)) = 1_{u > 0}$.

\myindent We are now in position to prove that $\lambda \to Q_{\lambda}$ is differentiable. Given $\lambda > 2$, let us consider for $h$ small (for example $h \in ]-\eps, \eps[$ with $\lambda - \eps > 2$)

\begin{equation}
    \tau(h) := \frac{Q_{\lambda + h} - Q_{\lambda}}{h}
\end{equation}

which solves

\begin{equation}
    -\Delta \tau(h) + |x|^2 \tau(h) + (Q_{\lambda + h}^2 + Q_{\lambda + h}Q_{\lambda} + Q_{\lambda}^2) \tau(h) = \lambda \tau(h) + Q_{\lambda + h}
\end{equation}

\myindent Suppose first that $\tau(h)$ is not bounded in $H_x^1$ when $h \to 0$. Then one can find $h_k\to 0$ such that

\begin{equation}
    \|\tau(h_k)\|_{H_x^1}\to \infty
\end{equation}

\myindent There holds

\begin{equation}
    \begin{split}
    \|\tau(h_k)\|_{H_x^1}^2 &\leq  \|\tau(h_k)\|_{H_x^1}^2 + \int_{\R^2} (Q_{\lambda + h_k}^2 + Q_{\lambda + h_k} Q_{\lambda} + Q_{\lambda}^2)\tau(h_k)^2 \\
    &= \lambda \|\tau(h_k)\|_{L^2}^2 + \int_{\R^2} Q_{\lambda + h_k} \tau(h_k)\\
    &\leq \|\tau(h_k)\|_{L^2}(\lambda\|\tau(h_k)\|_{L^2} + C)
    \end{split}
\end{equation}

where $C = \sup_k \|Q_{\lambda + h_k}\|_{L^2}$. This ensures that 

\begin{equation}
\|\tau(h_k)\|_{L^2} \to \infty
\end{equation}

and thus there exists a $C>0$ such that 

\begin{equation}
    \|\tau(h_k)\|_{H_x^1} \leq C \|\tau(h)\|_{L^2}
\end{equation}

\myindent Now, this ensures that $\frac{\tau(h_k)}{\|\tau(h_k)\|_{L^2}}$ is bounded in $H_x^1$ when $k\to \infty$, therefore one can find a weak limit point $u \in H_x^1$. Using $\frac{Q_{\lambda + h_k}}{\|\tau(h_k)\|_{L^2}} \to 0$ in $H_x^1$ one finds

\begin{equation}
    -\Delta u + |x|^2 u + 3 Q_{\lambda}^2 u = \lambda u \qquad \text{in} \ (H_x^1)'
\end{equation}

\myindent Thus using lemma \eqref{uniqueness} we know that $u = 0$. However using lemma \eqref{injcomp} we should have $\|u\|_{L^2} = 1$ which is a contradiction. We therefore infer that $\tau(h)$ is bounded in $H_x^1$ as $h \to 0$, thus is weakly compact in $H_x^1$.

\myindent Now, suppose that $h_n \to 0$ is such that

\begin{equation}
    \tau(h_n) \rightharpoonup g \qquad \text{weakly in} \ H_x^1
\end{equation}

\myindent We have that $g$ is a distributional solution (thus a solution in $(H_x^1)'$) of 

\begin{equation}\label{eqderiv0}
    H g + 3 Q_{\lambda}^2 g = \lambda g + Q_{\lambda}
\end{equation}

\myindent Lemma \eqref{uniqueness} ensures that this equation has at most one solution in $H_x^1$, thus $\tau(h)$ is weakly compact and have a unique weak limit point in $H_x^1$ when $h \to 0$. This ensures that 

\begin{equation}
    \tau(h) \rightharpoonup g \qquad \text{weakly in} \ H_x^1 \ h\to 0
\end{equation}

where $g$ is the unique solution to \eqref{eqderiv0}. Using now the compactness lemma \eqref{injcomp}, the continuity of $H$ from $L^2$ to $H_x^1$, and the equation 

\begin{equation}
    \tau(h) = H^{-1}\left(-(Q_{\lambda + h}^2 + Q_{\lambda + h}Q_{\lambda} + Q_{\lambda}) \tau(h) + \lambda \tau(h) + Q_{\lambda + h}\right)
\end{equation}

\myindent We infer that the convergence $\tau(h) \to g$ happens strongly in $H_x^1$, thus concluding the proof of proposition \eqref{deriv}. We also have the explicit formula 

\begin{lemma}[characterization of $\partial_{\lambda}Q_{\lambda}$]\label{eqderiv}
    The derivative $\partial_{\lambda}Q_{\lambda}$ is the unique solution $u$ in $H_x^1$ to the equation 
    
    \begin{equation}
        (H + 3Q_{\lambda}^2 - \lambda) u = Q_{\lambda}
    \end{equation}
    
\end{lemma}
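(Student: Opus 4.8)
The plan is to show that the equation $(H + 3Q_\lambda^2 - \lambda)u = Q_\lambda$ is uniquely solvable in $H_x^1$ and that its solution coincides with $\partial_\lambda Q_\lambda$. The second part is essentially already in hand: the preceding argument established that $\tau(h) \to g$ strongly in $H_x^1$ as $h\to 0$, where $g$ is \emph{the} solution in $H_x^1$ of \eqref{eqderiv0}, which is precisely $(H + 3Q_\lambda^2 - \lambda)g = Q_\lambda$. By definition of the difference quotient, $g = \partial_\lambda Q_\lambda$. So the content of the lemma beyond what was proved is the assertion of \emph{uniqueness} of the solution in $H_x^1$, i.e. that the only $H_x^1$ solution to the homogeneous equation $(H + 3Q_\lambda^2 - \lambda)u = 0$ is $u = 0$.

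That uniqueness is exactly the statement of Lemma~\ref{uniqueness}, which is already available: if $(H + 3Q_\lambda^2 - \lambda)u = 0$ in $(H_x^1)'$, then $u = 0$. Hence the proof is a two-line assembly: first invoke Lemma~\ref{uniqueness} to conclude that $(H + 3Q_\lambda^2 - \lambda)u = Q_\lambda$ has at most one solution in $H_x^1$ (the difference of two solutions solving the homogeneous equation); then recall from the proof of Proposition~\ref{deriv} that $\tau(h) = (Q_{\lambda+h} - Q_\lambda)/h$ converges strongly in $H_x^1$ to a solution $g$ of \eqref{eqderiv0}, so $\partial_\lambda Q_\lambda$ exists in $H_x^1$ and equals $g$, the unique solution. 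One should perhaps also remark, for completeness, that $Q_\lambda \in H_x^1$ (indeed $Q_\lambda \in \bigcap_k H_x^k$ by Proposition~\ref{exunic}) so that the right-hand side lives in the correct space, and that the operator $H + 3Q_\lambda^2 - \lambda$ maps $H_x^1$ into $(H_x^1)'$ by the algebra and multiplication estimates \eqref{algebra}, \eqref{xcarre}.

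There is essentially no genuine obstacle here, since all the analytic work has been done upstream — the a priori bound on $\tau(h)$, the identification of weak limit points via the equation, the upgrade from weak to strong convergence via $H^{-1}$ and the compact embedding Lemma~\ref{injcomp}, and the crucial injectivity Lemma~\ref{uniqueness}. If anything, the only point deserving a word of care is to make explicit why $g$ — characterized in the previous paragraph only as ``the unique weak/strong limit point'' — is the \emph{same} object as ``the unique solution in $H_x^1$ of $(H + 3Q_\lambda^2 - \lambda)u = Q_\lambda$''; this is immediate because a strong $H_x^1$ limit of $\tau(h_n)$ is in particular a distributional solution of \eqref{eqderiv0}, and Lemma~\ref{uniqueness} forbids a second one. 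I would therefore write the proof simply as: the difference of two $H_x^1$ solutions to $(H+3Q_\lambda^2-\lambda)u = Q_\lambda$ is killed by Lemma~\ref{uniqueness}, giving uniqueness; existence and the identification with $\partial_\lambda Q_\lambda$ were established in the course of proving Proposition~\ref{deriv}.
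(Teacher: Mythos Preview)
Your proposal is correct and matches the paper's approach exactly: the paper presents this lemma without a separate proof, as an immediate corollary of the argument proving Proposition~\ref{deriv} (where $g$ is identified as the unique $H_x^1$ solution of \eqref{eqderiv0} via Lemma~\ref{uniqueness}, and $\tau(h)\to g$ strongly). Your two-line assembly --- uniqueness from Lemma~\ref{uniqueness}, existence and identification from the difference-quotient limit --- is precisely the content the paper leaves implicit.
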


\subsection{\textit{Bifurcation branch near $\lambda = 2$}}

\myindent We give an equivalent in $H_x^1$ of the soliton $Q_{2+\eps}$ as $\eps \to 0$

\begin{proposition}[Linear approximation near $\lambda = 2$]\label{linapprox}
    Define $h_0(x) = C\exp\left(-\frac{|x|^2}{2}\right)$ with $C$ such that $\|h_0\|_{L^2} = 1$. Then as $\eps \to 0$ there holds in $H_x^1$
    
    \begin{equation}
        Q_{2 + \eps} = \eps^{\frac{1}{2}} \|h_0\|_{L^4}^{-2} h_0 + o(\eps^{\frac{1}{2}})
    \end{equation}
\end{proposition}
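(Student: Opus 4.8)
The plan is to view $Q_{2+\eps}$ as a bifurcation branch emanating from the kernel of $H-2$, which is spanned by $h_0$, and to extract the leading-order behavior by a Lyapunov–Schmidt-type rescaling. First I would rescale: write $Q_{2+\eps} = \eps^{1/2} R_\eps$, so that the soliton equation $HQ_{2+\eps} + Q_{2+\eps}^3 = (2+\eps)Q_{2+\eps}$ becomes
\begin{equation}
    (H-2) R_\eps + \eps R_\eps^3 = \eps R_\eps.
\end{equation}
The goal is then to show $R_\eps \to \|h_0\|_{L^4}^{-2} h_0$ in $H_x^1$ as $\eps\to 0$. The scaling exponent $\tfrac12$ is the right one precisely because $\|Q_{2+\eps}\|_{L^2}^2 \sim c\eps$: this can be read off from Lemma \ref{normincr} together with Proposition \ref{deriv} (the map $\lambda\mapsto Q_\lambda$ is $\mathcal C^1$, and $Q_2 = 0$ since there is no nontrivial solution at $\lambda=2$), or more robustly by testing the soliton equation against $h_0$, which gives $\eps(h_0, Q_{2+\eps}) = (h_0, Q_{2+\eps}^3) \geq 0$, and against $Q_{2+\eps}$ itself, which gives $\|Q_{2+\eps}\|_{H_x^1}^2 + \|Q_{2+\eps}\|_{L^4}^4 = (2+\eps)\|Q_{2+\eps}\|_{L^2}^2$; combined with $\|Q_{2+\eps}\|_{H_x^1}^2 \geq 2\|Q_{2+\eps}\|_{L^2}^2$ these force $\|Q_{2+\eps}\|_{L^4}^4 \leq \eps \|Q_{2+\eps}\|_{L^2}^2$ and hence, with the compact embedding of Lemma \ref{injcomp} and a Poincaré-type inequality for $H-2$ restricted to $h_0^\perp$, an a priori bound $\|R_\eps\|_{H_x^1} \leq C$.

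Next I would extract the limit. Since $(R_\eps)$ is bounded in $H_x^1$, pass to a weakly convergent subsequence $R_\eps \rightharpoonup R$ in $H_x^1$; by Lemma \ref{injcomp} the convergence is strong in $L^2$ and $L^4$, so $R_\eps^3 \to R^3$ in $L^{4/3} \subset (H_x^1)'$ and $\eps R_\eps^3 \to 0$ there. Passing to the limit in the rescaled equation gives $(H-2)R = 0$ in $(H_x^1)'$, so $R = \mu h_0$ for some $\mu \geq 0$ (nonnegativity is inherited since each $Q_{2+\eps} \geq 0$). To pin down $\mu$, decompose $R_\eps = a_\eps h_0 + r_\eps$ with $r_\eps \perp h_0$ in $L^2$. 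Testing the rescaled equation against $h_0$ kills the $(H-2)R_\eps$ term (self-adjointness, $(H-2)h_0=0$) and yields $\eps(R_\eps^3, h_0) = \eps (R_\eps, h_0)$, i.e. $(R_\eps^3, h_0) = a_\eps$; testing against $r_\eps$ and using the spectral gap $(H-2)\big|_{h_0^\perp} \geq 2$ gives $2\|r_\eps\|_{L^2}^2 \leq \|r_\eps\|_{H_x^1}^2 \leq \eps\big( \|R_\eps\|_{L^4}^3\|r_\eps\|_{L^4} + \|R_\eps\|_{L^2}\|r_\eps\|_{L^2}\big) \to 0$, so $r_\eps \to 0$ strongly in $H_x^1$ and $a_\eps \to \mu$. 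Taking $\eps\to 0$ in $(R_\eps^3, h_0) = a_\eps$, using strong $L^4$ convergence $R_\eps^3 \to \mu^3 h_0^3$, gives $\mu^3 \|h_0\|_{L^4}^4 = \mu$, hence (since $\mu \neq 0$, which follows from $\|R_\eps\|_{L^2} \to \|R\|_{L^2} = \mu\|h_0\|_{L^2} = \mu$ together with the lower bound $\|R_\eps\|_{L^2} = \eps^{-1/2}\|Q_{2+\eps}\|_{L^2} \geq \eps^{-1/2}\|Q_{2}\|_{L^2}$... actually rather from the nondegeneracy established below) $\mu = \|h_0\|_{L^4}^{-2}$. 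Since the limit is the same along every subsequence, the full limit $R_\eps \to \|h_0\|_{L^4}^{-2} h_0$ holds in $H_x^1$, which is the claim.

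The main obstacle is ruling out $\mu = 0$, i.e. showing the branch does not degenerate to a faster decay. The cleanest route is to get a lower bound $\|Q_{2+\eps}\|_{L^2}^2 \geq c\eps$ for small $\eps$: this follows from Lemma \ref{eqderiv}, since $\partial_\lambda Q_\lambda\big|_{\lambda=2^+}$ is characterized as the solution of $(H-2)u = Q_2 = 0$ on the positive branch — hmm, at $\lambda = 2$ exactly this is degenerate, so instead one should argue directly: if $\mu = 0$ then $R_\eps \to 0$ in $L^2$, but testing against $h_0$ gave $a_\eps = (R_\eps^3, h_0) \leq \|R_\eps\|_{L^4}^3\|h_0\|_{L^4}$, and one needs a reverse estimate. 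The robust fix is a quantitative lower bound obtained by plugging the near-optimizer $t h_0$ into the functional $J$ at $\lambda = 2+\eps$: minimizing $J(th_0) = \tfrac12 t^2(2 - (2+\eps))\|h_0\|_{L^2}^2 + \tfrac14 t^4\|h_0\|_{L^4}^4 = -\tfrac{\eps}{2}t^2 + \tfrac14 t^4\|h_0\|_{L^4}^4$ over $t$ gives minimum value $-\tfrac{\eps^2}{4\|h_0\|_{L^4}^4}$ at $t^2 = \eps\|h_0\|_{L^4}^{-4}$, so $J(Q_{2+\eps}) \leq -\tfrac{\eps^2}{4\|h_0\|_{L^4}^4}$, which (combined with $J(Q_{2+\eps}) = -\tfrac14\|Q_{2+\eps}\|_{L^4}^4$ from the Pohozaev/Nehari identity and the upper bound on $R_\eps$) forces $\|R_\eps\|_{L^4}^4 \geq \tfrac{1}{\|h_0\|_{L^4}^4} + o(1)$, hence $\mu > 0$ and in fact pins $\mu$ down directly. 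This energy comparison is the one genuinely new estimate needed; everything else is weak compactness plus the spectral gap of $H$ at its ground state.
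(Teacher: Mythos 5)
Your proposal takes a genuinely different route from the paper. The paper never introduces the rescaled variable $R_\eps = \eps^{-1/2}Q_{2+\eps}$: it first shows $Q_{2+\eps}\to 0$ in $H_x^1$, then normalizes by $\|Q_{2+\eps}\|_{H_x^1}$ and identifies the limit direction as $\tfrac{1}{\sqrt 2}h_0$ via the ratio $\|Q\|_{H_x^1}/\|Q\|_{L^2}\to\sqrt 2$, and only at the end extracts the scalar $\eps^{1/2}\|h_0\|_{L^4}^{-2}$ by plugging the asymptotic $Q = \|Q\|_{L^2}h_0 + o(\|Q\|_{L^2})$ into the Nehari identity. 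You instead rescale up front, identify the direction from the limiting equation $(H-2)R=0$, and fix the constant from the bifurcation identity $\mu^3\|h_0\|_{L^4}^4=\mu$ combined with an energy comparison $J(Q_{2+\eps})\le J(t_*h_0)=-\tfrac{\eps^2}{4\|h_0\|_{L^4}^4}$, which you couple with $J(Q)=-\tfrac14\|Q\|_{L^4}^4$ to rule out $\mu=0$. That energy comparison is clean and self-contained; it buys you a quantitative lower bound $\|R_\eps\|_{L^4}^4\ge\|h_0\|_{L^4}^{-4}$ that the paper doesn't state. The decomposition $R_\eps = a_\eps h_0 + r_\eps$ with a spectral-gap estimate on $r_\eps$ is also a more structural way of upgrading weak to strong $H_x^1$ convergence than the paper's $H^{-1}\colon L^2\to H_x^1$ bootstrapping.

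The one genuine gap is the a priori bound $\|R_\eps\|_{H_x^1}\le C$. You derive $\|R_\eps\|_{L^4}^4\le\|R_\eps\|_{L^2}^2$ and $\|R_\eps\|_{H_x^1}^2\le(2+\eps)\|R_\eps\|_{L^2}^2$, and then assert the bound "with the compact embedding \dots and a Poincaré-type inequality," but no estimate actually closes the loop on $\|R_\eps\|_{L^2}$ itself: the two inequalities above are compatible with $\|R_\eps\|_{L^2}\to\infty$. (The $\mathcal C^1$ shortcut you first float does not help either, since, as you note, $\lambda\mapsto Q_\lambda$ is only differentiable for $\lambda>2$ and $\partial_\lambda Q_\lambda$ blows up as $\lambda\to 2^+$, precisely reflecting the $\eps^{1/2}$ scaling.) Without this bound you cannot extract a weak limit $R$, so the rest of the argument is not yet launched. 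A fix in the spirit of your own ingredients: if along a subsequence $\|R_\eps\|_{L^2}\to\infty$, set $\tilde R_\eps := R_\eps/\|R_\eps\|_{L^2}$; then $\|\tilde R_\eps\|_{H_x^1}\le\sqrt{2+\eps}$ from Nehari, $\|\tilde R_\eps\|_{L^2}=1$, while $\|\tilde R_\eps\|_{L^4}^4\le\|R_\eps\|_{L^2}^{-2}\to 0$, so by the compact embedding of Lemma \ref{injcomp} any weak $H_x^1$-limit $\tilde R$ has $\|\tilde R\|_{L^2}=1$ and $\|\tilde R\|_{L^4}=0$, a contradiction. Also note that the displayed inequality $\|r_\eps\|_{H_x^1}^2\le\eps(\cdots)$ is not what testing against $r_\eps$ gives (the correct identity is $\|r_\eps\|_{H_x^1}^2=(2+\eps)\|r_\eps\|_{L^2}^2-\eps\scal{R_\eps^3}{r_\eps}$, which combined with the spectral gap $\|r_\eps\|_{H_x^1}^2\ge 6\|r_\eps\|_{L^2}^2$ yields $\|r_\eps\|_{L^2}^2\le C\eps\|r_\eps\|_{L^4}$ and hence $r_\eps\to 0$), and the spectral gap constant on $h_0^\perp$ is $4$, not $2$; neither affects the conclusion.
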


\begin{proof}
We first notice that 

\begin{equation}
    Q_{2 + \eps} \to 0 \qquad \text{in}\ H_x^1
\end{equation}

\myindent Indeed, we know that 

\begin{equation}
    \|Q_{2 + \eps}\|_{H_x^1}^2 \leq (2 + \eps) \|Q_{2 + \eps}\|_{L^2}^2
\end{equation}

and the right-hand side is bounded as $\eps \to 0$ thank to \eqref{normincr}. Given $u$ a weak limit point of $Q_{2 + \eps}$ in $H_x^1$ when $\eps \to 0$, we see that

\begin{equation}
    Hu + u^3 = 2 u
\end{equation}

\myindent This in turns implies that

\begin{equation}
    \scal{Hu}{u} - 2\|u\|_{L^2}^2 = - \int |u|^4
\end{equation}

\myindent As \eqref{diagH} yields $\scal{Hu}{u} \geq 2\|u\|_{L^2}^2$ this implies $u = 0$. Therefore $Q_{2 + \eps}$ converges weakly to $0$ in $H_x^1$ and using the same trick as before (compactness of injection and continuity of $H^{-1} : L^2 \to H_x^1$) this convergence happens in $H_x^1$ norm.

\myindent Now we study the convergence of $\frac{Q_{2 + \eps}}{\|Q_{2 + \eps}\|_{H_x^1}}$. There holds

\begin{equation}
    H\left(\frac{Q_{2 + \eps}}{\|Q_{2 + \eps}\|_{H_x^1}}\right)  + \frac{Q_{2 + \eps}^3}{\|Q_{2 + \eps}\|_{H_x^1}} = (2 + \eps) \frac{Q_{2 + \eps}}{\|Q_{2 + \eps}\|_{H_x^1}}
\end{equation}

\myindent Therefore if $u\in H_x^1$ is a weak limit point to $\frac{Q_{2 + \eps}}{\|Q_{2 + \eps}\|_{H_x^1}}$ when $\eps \to 0$ there holds

\begin{equation}
    H u \leq 2u
\end{equation}

which implies that $u$ is proportional to $h_0$. Now,

\begin{equation}
    \left(\frac{\|Q_{2 + \eps}\|_{H_x^1}}{\|Q_{2 + \eps}\|_{L^2}}\right)^2 + \frac{\|Q_{2 + \eps}\|_{L^4}^4}{\|Q_{2 + \eps}\|_{L^2}^2} = 2 + \eps
\end{equation}

\myindent Using moreover that 

\begin{equation}
    \frac{\|Q_{2 + \eps}\|_{L^4}^4}{\|Q_{2 + \eps}\|_{L^2}^2} \leq C \|Q_{2 + \eps}\|_{H_x^1}^2 \to 0
\end{equation}

we now find that 

\begin{equation}
    \frac{\|Q_{2 + \eps}\|_{H_x^1}}{\|Q_{2 + \eps}\|_{L^2}} \to \sqrt{2} \qquad \eps \to 0
\end{equation}

and the compact injection of $H_x^1$ into $L^2$ \eqref{injcomp} yields 

\begin{equation}
    \|u\|_{L^2} = \frac{1}{\sqrt{2}}
\end{equation}

\myindent Therefore, $\frac{Q_{2 + \eps}}{\|Q_{2 + \eps}\|_{H_x^1}}$ converges weakly to its unique weak limit point $\frac{1}{\sqrt{2}} h_0$ in $H_x^1$, and this convergence happens in $H_x^1$ norm with again the same trick.

\myindent We already know

\begin{equation}
    \frac{Q_{2 + \eps}}{\|Q_{2 + \eps}\|_{H_x^1}} \to \frac{1}{\sqrt{2}} h_0 \qquad \text{in} \ H_x^1
\end{equation}

\myindent Using that $HQ_{2+\eps} + Q_{2 + \eps}^3 = (2 + \eps)Q_{2 + \eps}$ and that $Q_{2+ \eps} = \|Q_{2 + \eps}\|_{L^2}h_0 + o(\|Q_{2+\eps}\|_{L^2})$ in $H_x^1$ we can write

\begin{equation}
    \begin{split}
    &\left\| \|Q_{2 + \eps}\|_{L^2} h_0 + o(\cdot)\right\|_{H_x^1}^2 +\left\| \|Q_{2 + \eps}\|_{L^2} h_0 + o(\cdot)\right\|_{L^4}^4 = (2+\eps)\left\|\|Q_{2 + \eps}\|_{L^2} h_0 + o(\cdot)\right\|_{L^2}^2\\
    &\text{thus}\qquad 2\|Q_{2 + \eps}\|_{L^2}^2 + \|Q_{2 + \eps}\|_{L^2}^4 \|h_0\|_{L^4}^4 = (2 + \eps)\|Q_{2 + \eps}\|_{L^2}^2 + o(\cdot)\\
    &\text{which yields} \qquad \|Q_{2 + \eps}\|_{L^2} = \eps^{\frac{1}{2}} \|h_0\|_{L^4}^{-2} + o(\cdot)
    \end{split}
\end{equation}

\end{proof}

\subsection{\textit{$L^{\infty}$ bound}}

\myindent We will need to use, at some point, that the multiplication by $Q_{2+\eps}^2$ is continuous on the function spaces $H_x^k$, and with operator norm arbitrarily small as $\eps \to 0$. To this effect, we prove the following statement

\begin{proposition}\label{infnorm}
    Let $k \geq 0$. There exists a constant $C_k > 0$ such that when $\eps \to 0$ there holds the bound 
    
    \begin{equation}
        \|D^k Q_{2+\eps}\|_{L^{\infty}} \leq C_k \eps^{\frac{1}{2}}
    \end{equation}
\end{proposition}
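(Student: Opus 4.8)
The plan is to bootstrap from the $H_x^1$ linear approximation of Proposition~\ref{linapprox} up to uniform control of all derivatives, exploiting that $Q_{2+\eps}$ satisfies the elliptic equation $HQ_{2+\eps} = (2+\eps)Q_{2+\eps} - Q_{2+\eps}^3$, together with the Sobolev embedding of Lemma~\ref{Linfty} and the algebra property \eqref{algebra}. First I would upgrade Proposition~\ref{linapprox} to \emph{all} the $H_x^m$ norms: I claim that for every $m\geq 1$ there is a constant $c_m$ with $\|Q_{2+\eps}\|_{H_x^m}\leq c_m \eps^{1/2}$ as $\eps\to 0$. The case $m=1$ is exactly Proposition~\ref{linapprox}. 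Assuming it for $m$, write $Q=Q_{2+\eps}$ and note
\begin{equation}
    \|Q\|_{H_x^{m+1}} = \|H^{\frac{m+1}{2}}Q\|_{L^2} \lesssim \|H^{\frac{m-1}{2}}(HQ)\|_{L^2} = \|H^{\frac{m-1}{2}}\big((2+\eps)Q - Q^3\big)\|_{L^2} \lesssim \|Q\|_{H_x^{m-1}} + \|Q^3\|_{H_x^{m-1}}.
\end{equation}
For the cubic term, once $m-1 \geq 2$ the algebra estimate \eqref{algebra} gives $\|Q^3\|_{H_x^{m-1}} \lesssim \|Q\|_{H_x^{m-1}}^3 \lesssim \eps^{3/2}$, which is lower order; for the low values $m-1 \in \{0,1\}$ one instead uses $\|Q^3\|_{L^2} \leq \|Q\|_{L^\infty}^2\|Q\|_{L^2}$ and the crude bound $\|Q\|_{L^\infty} \lesssim \|Q\|_{H^2_x}^{1/2}\|Q\|_{L^2}^{1/2}$ (or simply Lemma~\ref{Linfty} with the $H^2_x$ bound already in hand), so again the cubic contribution is $O(\eps^{3/2})$, hence negligible. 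In all cases the right-hand side is $\lesssim \eps^{1/2}$, closing the induction. One small technical point: to make the very first step of the induction legitimate we need $Q\in H_x^{m+1}$ a priori, which is granted by Proposition~\ref{exunic} ($Q\in\bigcap_k H_x^k$), so the manipulations above are justified.

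With the uniform bound $\|Q_{2+\eps}\|_{H_x^m}\lesssim \eps^{1/2}$ for all $m$, the conclusion follows from the Sobolev embedding. Fix $k\geq 0$, pick any integer $m > k+2$ (so that $m-k>2$), and apply Lemma~\ref{Linfty} with $r=m-k$ (or rather its pointwise consequence applied to each partial derivative $D^\alpha Q_{2+\eps}$ with $|\alpha|=k$): since differentiating $k$ times costs at most $k$ orders of regularity and the $H^r_x$ norm dominates the $H^r$ Sobolev norm (as noted around \eqref{equivalence}), we get
\begin{equation}
    \|D^k Q_{2+\eps}\|_{L^\infty} \leq C \sum_{|\alpha|=k}\|D^\alpha Q_{2+\eps}\|_{H^{m-k}} \leq C'\|Q_{2+\eps}\|_{H^{m}_x} \leq C_k\, \eps^{1/2},
\end{equation}
which is the claimed estimate.

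The main obstacle is really just the bookkeeping for the low-regularity base cases of the induction, where the algebra lemma \eqref{algebra} is not available (it requires $r>1$): there one must handle $\|Q^3\|_{L^2}$ and $\|Q^3\|_{H^1_x}$ by hand using an $L^\infty$ bound on $Q$, and one has to be careful that this $L^\infty$ bound is itself only of size $\eps^{1/2}$ (which is exactly what Proposition~\ref{linapprox} plus the $m=2$ step of the induction provides — so there is a mild circularity to untangle by ordering the steps correctly: establish $\|Q\|_{H^2_x}\lesssim\eps^{1/2}$ first, deduce $\|Q\|_{L^\infty}\lesssim\eps^{1/2}$, then proceed with the general induction where the cubic term is harmless). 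Beyond that, everything is a routine consequence of the elliptic equation and standard embeddings.
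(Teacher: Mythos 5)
Your proposal follows essentially the same bootstrap strategy as the paper: iterate the elliptic equation $HQ = \lambda Q - Q^3$ to lift the $H_x^1$ control of Proposition \ref{linapprox} to all $H_x^m$ norms (stepping by two orders at a time), handle the cubic term via the algebra property \eqref{algebra} once the order exceeds $1$, and conclude with the Sobolev embedding of Lemma \ref{Linfty}. The one substantive point of divergence is how the low-order cubic term $\|Q^3\|_{L^2}$ is handled when passing from $H_x^1$ to $H_x^2$, and there your argument has a real, if repairable, circularity. You bound $\|Q^3\|_{L^2} \leq \|Q\|_{L^\infty}^2\|Q\|_{L^2}$ and then estimate $\|Q\|_{L^\infty}$ by interpolation against $\|Q\|_{H_x^2}$ (or by Lemma \ref{Linfty}, which also requires $H_x^2$ control). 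But $\|Q\|_{H_x^2}$ is precisely the quantity you are trying to bound at this step, and the remedy you suggest --- ``establish $\|Q\|_{H_x^2}\lesssim\eps^{1/2}$ first, then deduce the $L^\infty$ bound'' --- is itself circular as stated, since you give no independent route to the $H_x^2$ bound. This can be repaired by an absorption argument: the resulting inequality $\|Q\|_{H_x^2} \lesssim \|Q\|_{L^2} + \|Q\|_{L^2}^2\,\|Q\|_{H_x^2}$ has a small prefactor $\|Q\|_{L^2}^2\lesssim\eps$ on the dangerous term, so for $\eps$ small it can be moved to the left (using the a priori finiteness of $\|Q\|_{H_x^2}$ from Proposition \ref{exunic}). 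The paper's route is cleaner and avoids the issue entirely: it writes $\|Q^3\|_{L^2} = \|Q\|_{L^6}^3 \lesssim \|Q\|_{H_x^1}^3 \lesssim \eps^{3/2}$ using only the embedding $H_x^1 \hookrightarrow L^6$ from Lemma \ref{injcomp} together with Proposition \ref{linapprox}, so the needed input is exactly the $H_x^1$ bound already in hand. With that replacement your argument coincides with the paper's proof.
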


\myindent Let us stress that this bound will typically happen over an open interval $\eps \in (0, \alpha_k)$ where the upper bound $\alpha_k$ depends of the derivation order $k$ that we consider. However we will always consider only a finite number of derivatives together so we will always be able to have a uniform interval where the bound holds. That is why we will write "when $\eps \to 0$" without specifying the meaning of this assertion : it will mean "there exists a $\alpha > 0$ such that on $(0,\alpha)$ there holds".

\begin{proof}
    Using lemma \eqref{Linfty}, there holds 
    
    \begin{equation}
        \|D^k Q\|_{L^{\infty}} \leq C_k\|D^k Q\|_{H^2} \leq C_k\|Q\|_{H^{2 + k}}
    \end{equation}
    
    \myindent Thus, using lemma \eqref{equivalence}, we need only prove that $\|Q\|_{H_x^{k+2}} \leq C_k \eps^{\frac{1}{2}}$. As however the $H_x^r$ norms are increasing with respect to $r$, we need only prove that given an even integer $2m$ there holds 
    
    \begin{equation}
        \|Q\|_{H_x^{2m}}\leq C_m \eps^{\frac{1}{2}}
    \end{equation}
    
    \myindent Thanks to \eqref{linapprox}, we know the statement for $m = 0$. We also see that using \eqref{injcomp} and \eqref{linapprox}
    
    \begin{equation}
        \begin{split}
        \|Q^3\|_{L^2} &= \|Q\|_{L^6}^3\\
        &\leq C\|Q\|_{H_x^1}^3 \\
        &\leq C \eps^{\frac{3}{2}} \\
        &\leq C\eps^{\frac{1}{2}}
        \end{split}
    \end{equation}
    
    provided we suppose $\eps$ to be small enough.
    \myindent Thus, we can write
    
    \begin{equation}
        \begin{split}
        \|Q\|_{H_x^2} &= \|HQ\|_{L^2} \\
        &= \left\|\lambda Q - Q^3\right\|_{L^2}\\
        &\leq C_2 \eps^{\frac{1}{2}}
        \end{split}
    \end{equation}
    
    \myindent Now, assume $\|Q\|_{H_x^{2m}} \leq C_m \eps^{\frac{1}{2}}$ for a given $m \geq 1$. Then we have provided for example $1 > \eps$
    
    \begin{equation}
        \begin{split}
        \|Q\|_{H_x^{2m + 2}} &= \|HQ\|_{H_x^{2m}} \\
        &= \left\|\lambda Q - Q^3\right\|_{H_x^{2m}}\\
        &\leq 3C_m\eps^{\frac{1}{2}} + C\|Q\|_{H_x^{2m}}^3 \\
        &\leq C_{m + 1} \eps^{\frac{1}{2}}
        \end{split}
    \end{equation}
    
    where we use lemma \eqref{algebra} to say that $H_x^{2m}$ is an algebra.
\end{proof}

\section{Modulation}

\myindent In this section we introduce the \textit{modulation operators} that we will use in order to transform equation \eqref{eqintro} into a perturbation of equation \eqref{eqsolit}. Moreover, we detail how those modulation act upon the $H_x^1$ norm of solutions and how we obtain norm growth using the theory of quasi-Hamiltonian dynamical systems.

\subsection{\textit{Modulation operators and energy estimates}}

\myindent Given $u \in H_x^1$ and $N > 0$ we set 

\begin{equation}
    (S_N u)(x) := \frac{1}{N} u\left(\frac{x}{N}\right)
\end{equation}

\myindent As we are in dimension 2, 

\begin{equation}
    \|S_N u\|_{L^2} = \|u\|_{L^2}
\end{equation}

\myindent Moreover for $u\in H_x^1$

\begin{equation}
    \begin{split}
    \|xS_Nu\|_{L^2}^2 &= \int_{\R^2} \frac{|x|^2}{N^2} \left|u\left(\frac{x}{N}\right)\right|^2 dx \\
    &= N^2 \|xu\|_{L^2}^2\\
    \|\nabla S_N u\|_{L^2}^2 &= \|\frac{1}{N} S_N\left(\nabla u\right)\|_{L^2}^2\\
    &= \frac{1}{N^2} \|\nabla u\|_{L^2}^2
    \end{split}
\end{equation}

\myindent We also modulate by multiplication by $e^{im|x|^2}$ for a given $m\in \R$ : this changes neither the $L^2$ norm neither $\|xu\|_{L^2}$. However for all $u \in H_x^1$ there holds

\begin{equation}
    \|\nabla\left(e^{i m |x|^2} u(x)\right)\|_{L^2}^2 = \left\|2im x u(x) + \nabla u(x)\right\|_{L^2}^2
\end{equation}

which yields 

\begin{equation}
    \begin{split}
    \text{When $u$ is real-valued}\  \|\nabla\left(e^{i m |x|^2} u(x)\right)\|_{L^2}^2 &= 4m^2 \|xu\|_{L^2}^2 + \|\nabla u\|_{L^2}^2\\
    \text{Otherwise}\  \|\nabla\left(e^{i m |x|^2} u(x)\right)\|_{L^2}^2 &\leq 2\left(4m^2 \|xu\|_{L^2} + \|\nabla u\|_{L^2}\right)
    \end{split}
\end{equation}

\myindent Combining these estimates yields the lemma

\begin{lemma}[energy estimates through modulation]\label{enest}
    When $u$ is real-valued there holds
    
    \begin{equation}
    \begin{split}
    \|S_N e^{im|y|^2} u(y)\|_{H_x^1}^2 &= \|xS_N e^{im|y|^2} u(y)\|_{L^2}^2 + \|\nabla S_N e^{im|y|^2} u(y)\|_{L^2}^2
    \\
    &= \left(N^2 + \frac{4m^2}{N^2}\right) \|xu\|_{L^2}^2 + \frac{1}{N^2} \|\nabla u\|_{L^2}^2
    \end{split}
    \end{equation}
    
    \myindent Otherwise 
    
    \begin{equation}
        \|S_N e^{im|y|^2} u(y)\|_{H_x^1}^2 \leq 2\left(N^2 + \frac{4m^2 + 1}{N^2}\right) \|u\|_{H_x^1}^2
    \end{equation}
\end{lemma}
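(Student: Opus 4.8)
The plan is to assemble the three elementary facts collected just above the statement: the exact action of the dilation $S_N$ on the quantities $\|\cdot\|_{L^2}$, $\|x\,\cdot\|_{L^2}$ and $\|\nabla\,\cdot\|_{L^2}$ (in dimension $2$); the invariance of $\|\cdot\|_{L^2}$ and $\|x\,\cdot\|_{L^2}$ under multiplication by the unimodular factor $e^{im|\cdot|^2}$; and the identity $\|\nabla(e^{im|\cdot|^2}u)\|_{L^2}^2 = \|2im\,x\,u + \nabla u\|_{L^2}^2$. The one extra ingredient is the decomposition $\|v\|_{H_x^1}^2 = \|\nabla v\|_{L^2}^2 + \|xv\|_{L^2}^2$ already recorded in the proof of Lemma~\ref{injcomp}, which I would apply to $v := S_N\big(e^{im|y|^2}u(y)\big)$.

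Concretely, I would first set $w(y) := e^{im|y|^2}u(y)$, so that $v = S_N w$, and note $\|yw\|_{L^2} = \|yu\|_{L^2}$ (since $|w|=|u|$ pointwise) and $\|\nabla w\|_{L^2}^2 = \|2im\,y\,u + \nabla u\|_{L^2}^2$. Feeding $w$ into the $S_N$-scaling formulas gives $\|xS_N w\|_{L^2}^2 = N^2\|xu\|_{L^2}^2$ and $\|\nabla S_N w\|_{L^2}^2 = N^{-2}\|2im\,x\,u + \nabla u\|_{L^2}^2$, whence $\|v\|_{H_x^1}^2 = N^2\|xu\|_{L^2}^2 + N^{-2}\|2im\,x\,u + \nabla u\|_{L^2}^2$. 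It then remains only to expand $\|2im\,x\,u + \nabla u\|_{L^2}^2 = 4m^2\|xu\|_{L^2}^2 + \|\nabla u\|_{L^2}^2 - 4m\,\mathrm{Im}\!\int_{\R^2}(xu)\cdot\overline{\nabla u}$. For real $u$ the cross term vanishes because $\int (xu)\cdot\nabla u = \tfrac12\int x\cdot\nabla(u^2) = -\|u\|_{L^2}^2$ is real, which yields the claimed equality; in the general case I would instead bound $\|2im\,x\,u + \nabla u\|_{L^2}^2 \le 2\|2im\,x\,u\|_{L^2}^2 + 2\|\nabla u\|_{L^2}^2 \le (8m^2+2)\|u\|_{H_x^1}^2$, and then crudely majorize $N^2 + (8m^2+2)N^{-2} \le 2\big(N^2 + (4m^2+1)N^{-2}\big)$ to obtain the stated inequality.

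I do not expect any genuine obstacle: the lemma is pure bookkeeping of identities proved line-by-line just above. The only points that need a little care are (i) the vanishing of the cross term $\mathrm{Im}\int (xu)\cdot\overline{\nabla u}$, which is exactly where the hypothesis ``$u$ real-valued'' enters and explains why only an inequality survives in general; and (ii) keeping track of the fact that $S_N$ turns the phase $e^{im|y|^2}$ into $e^{i(m/N^2)|x|^2}$ — this is harmless provided, as above, the scaling formulas are applied to the already-modulated function $w$ (equivalently, the modulation estimate is applied to $S_N u$ with parameter $m/N^2$); either route gives the same result.
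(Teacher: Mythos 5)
Your proposal is correct and takes essentially the same route the paper intends: the paper derives exactly the same elementary scaling and phase-modulation identities immediately before the lemma and then closes with the remark ``combining these estimates yields the lemma,'' which is precisely the bookkeeping you carry out. Your only addition is to make explicit why the cross term $\mathrm{Im}\int(xu)\cdot\overline{\nabla u}$ vanishes for real $u$ (via $\int(xu)\cdot\nabla u=-\|u\|_{L^2}^2$, which is real), a detail the paper leaves implicit in its assertion of the real-valued case.
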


\subsection{\textit{Modulated Equation}}
\myindent We remind equation \eqref{maineq}

\begin{equation}
    i\partial_t u(t,x) = -\Delta u(t,x) + |x|^2 u(t,x) + u(t,x)|u(t,x)|^2 + V(t,x) u(t,x)
\end{equation}

Modulation operators allow for the following change of function 

\begin{proposition}[modulated equation]\label{modeq}
    Let $L(t) > 0$, $\gamma(t)$, $b(t)$ be $\mathcal{C}^1$ functions over $\R_+$. Set 
    
    \begin{equation}
        u := e^{i\gamma(t)} S_{L(t)} w \qquad w(t,y) := e^{-i\frac{b(t)|y|^2}{4}} v(t,y) \qquad \frac{ds}{dt} = \frac{1}{L^2}
    \end{equation}
    
    \myindent Assume $t \to s(t)$ is invertible from $(t_0, + \infty)$ onto itself. Then $u(t,x)$ solves \eqref{maineq} if and only if $v(s,y)$ solves
    
    \begin{equation}\label{eqmod}
        i\partial_s v + \Delta v - \gamma_s v + \left(- L^4 + \frac{b_s}{4} - \frac{b^2}{4} - \frac{L_s}{L} \frac{b}{2}\right)|y|^2 v - i\left(\frac{L_s}{L} + b\right)(1 + \Lambda) v - v|v|^2 - W(s,y) v = 0
    \end{equation}
    
    where $L_s := \frac{d}{ds} (L(t(s)))$ and similar definitions for $b_s,\gamma_s$ ; where $\Lambda := y\cdot \nabla$ and finally where
    
    \begin{equation}
        V(t,x) = \frac{1}{L^2} W\left(s, \frac{x}{L}\right)
    \end{equation}
    
\end{proposition}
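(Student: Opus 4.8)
The plan is to derive \eqref{eqmod} from \eqref{maineq} by performing the three changes of unknown $u=e^{i\gamma(t)}\tilde u$, then $\tilde u=S_{L(t)}w$, then $w(t,y)=e^{-ib(t)|y|^2/4}v(t,y)$, one after another, and to note that each of them is an invertible substitution, so that the resulting statement is automatically an ``if and only if''. First I would peel off the gauge factor: since $i\partial_t(e^{i\gamma}\tilde u)=e^{i\gamma}\big(i\partial_t\tilde u-\gamma_t\tilde u\big)$, while $-\Delta$, multiplication by $|x|^2$, the cubic nonlinearity and multiplication by the real function $V$ all commute with multiplication by $e^{i\gamma(t)}$, dividing \eqref{maineq} by $e^{i\gamma}$ turns it into the same equation for $\tilde u$ with an extra term $-\gamma_t\tilde u$ on the left-hand side.

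Next I would unfold the dilation $\tilde u(t,x)=L^{-1}w(t,x/L)$ and set $y=x/L$. Differentiating in $t$ produces the generator of dilations through $\partial_t(x/L)=-(L_t/L)\,y$, so that $\partial_t\tilde u=L^{-1}\big(\partial_t w-(L_t/L)(1+\Lambda)w\big)$ with $\Lambda=y\cdot\nabla_y$, the ``$1$'' coming from differentiating the $L^{-1}$ normalisation (which in dimension two is exactly $L^{-d/2}$); moreover $\Delta_x=L^{-2}\Delta_y$, $|x|^2=L^2|y|^2$, and in dimension two $|\tilde u|^2=L^{-2}|w|^2$, hence $\tilde u|\tilde u|^2=L^{-3}w|w|^2$, while $V\tilde u=L^{-3}W(s,y)w$ once $W$ is defined by $V(t,x)=L^{-2}W(s,x/L)$. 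Multiplying the equation through by $L^{3}$ and using $ds/dt=1/L^2$ — so that $\partial_t=L^{-2}\partial_s$, and therefore $L^2\partial_t w=\partial_s w$, $L^2(L_t/L)=L_s/L$ and $L^2\gamma_t=\gamma_s$ — yields the intermediate equation
\begin{equation*}
    i\partial_s w+\Delta_y w-\gamma_s w-L^4|y|^2 w-i\tfrac{L_s}{L}(1+\Lambda)w-w|w|^2-Ww=0 .
\end{equation*}
It is precisely the hypothesis that $t\mapsto s(t)$ is a bijection of $(t_0,+\infty)$, together with the $\mathcal C^1$ regularity of $L,b,\gamma$, that makes this chain-rule bookkeeping legitimate and allows $L,b,\gamma,v$ to be regarded as $\mathcal C^1$ functions of $s$.

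Finally I would conjugate by the quadratic phase $w=e^{i\phi}v$ with $\phi=-b|y|^2/4$, for which $\nabla\phi=-(b/2)y$, $\Delta\phi=-b$ (the dimension entering via $\Delta|y|^2=4$) and $|\nabla\phi|^2=(b^2/4)|y|^2$. One computes $e^{-i\phi}\,i\partial_s(e^{i\phi}v)=i\partial_s v+(b_s/4)|y|^2v$, then $e^{-i\phi}\Delta_y(e^{i\phi}v)=\Delta v-ib\Lambda v-ibv-(b^2/4)|y|^2v$, and $e^{-i\phi}(1+\Lambda)(e^{i\phi}v)=(1+\Lambda)v-i(b/2)|y|^2v$. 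Substituting these into the intermediate equation, the first-order terms combine as $-ib(1+\Lambda)v-i\tfrac{L_s}{L}(1+\Lambda)v=-i\big(\tfrac{L_s}{L}+b\big)(1+\Lambda)v$, whereas the scalar multiples of $|y|^2v$ — namely $b_s/4$ from $i\partial_s w$, $-b^2/4$ from $\Delta_y w$, $-L^4$ from the potential term, and the cross term $-i\tfrac{L_s}{L}\cdot(-i(b/2))=-\tfrac{L_s}{L}\tfrac{b}{2}$ coming from pushing $\Lambda$ through $e^{i\phi}$ — add up to the announced coefficient $-L^4+\tfrac{b_s}{4}-\tfrac{b^2}{4}-\tfrac{L_s}{L}\tfrac{b}{2}$; the cubic and potential terms are left unchanged since $|e^{i\phi}|=1$. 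This is exactly \eqref{eqmod}, and reading every step backwards gives the converse implication.

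I expect no genuine analytic obstacle here: the statement is a change of variables and the only real difficulty is bookkeeping. The two points I would be most careful about are (i) tracking the powers of $L$ that appear when the modulation parameters are differentiated in $t$ and then rewritten through $ds/dt=1/L^2$, and (ii) correctly merging the dilation operator $\Lambda$ that comes from the scaling with the extra $\Lambda$ and $|y|^2$ terms produced when $\Delta_y$ and $\Lambda$ are commuted through the quadratic phase. The emergence of the mixed term $\tfrac{L_s}{L}\tfrac{b}{2}|y|^2v$ from this last step is the single place where a sign slip is easiest to make, so I would verify that coefficient separately.
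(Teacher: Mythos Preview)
Your proof is correct and follows essentially the same route as the paper: both derive the intermediate equation for $w$ by tracking $i\partial_t$, $\Delta$, $|x|^2$ and the cubic term through the scaling $S_{L(t)}$ (using $\partial_t=L^{-2}\partial_s$), and then conjugate by the quadratic phase $e^{-ib|y|^2/4}$ to obtain \eqref{eqmod}. The only cosmetic difference is that you peel off the gauge $e^{i\gamma}$ before the dilation whereas the paper treats $e^{i\gamma}S_{L}$ in one go, and you package the Laplacian-through-phase computation via $\Delta(e^{i\phi}v)=e^{i\phi}(\Delta v+2i\nabla\phi\cdot\nabla v+(i\Delta\phi-|\nabla\phi|^2)v)$ rather than expanding term by term; the bookkeeping and the final coefficients agree.
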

\begin{proof}

\myindent We compute

\begin{equation}
    \begin{split}
    i\partial_t u &= i\partial_t\left(e^{i\gamma(t)} \frac{1}{L(t)} w(t, \frac{x}{L(t)})\right) \\
    &= e^{i\gamma(t)}\left(-\gamma_t \frac{1}{L(t)} w(t,\frac{x}{L(t)}) - i\frac{L_t}{L^2} w + i\frac{1}{L} \partial_t w (t,\frac{x}{L(t)}) - i \frac{L_t}{L^3} x\cdot\nabla w (t,\frac{x}{L(t)})\right) \\
    &= e^{i\gamma} S_L \left(-i \frac{L_t}{L}(1 + \Lambda)w + i\partial_t w - \gamma_t w\right)\\
    &= \frac{e^{i\gamma}}{L^2} S_L\left(-i\frac{L_s}{L}(1 + \Lambda ) w  + i\partial_s w - \gamma_s w\right)
    \end{split}
\end{equation}

where we use only that $\partial_t = \frac{1}{L^2}\partial_s$. This ensures that $u$ solves \eqref{maineq} if and only if

\begin{equation}
    \frac{1}{L^2}\left(-i\frac{L_s}{L}(1 + \Lambda)w + i\partial_s w - \gamma_s w \right) = - S_L^{-1} \Delta S_L w + S_L^{-1} |x|^2 S_L w + S_L^{-1} \left(S_L w |S_Lw|^2\right) + \frac{1}{L^2} Ww
\end{equation}

\myindent We compute 

\begin{equation}
    \begin{split}
    \Delta\left(\frac{1}{L}u\left(\frac{x}{L}\right)\right) &= \frac{1}{L^3} (\Delta u)\left(\frac{x}{L}\right) = S_L\left(\frac{1}{L^2} \Delta u\right)\\
    |x|^2 \frac{1}{L} u\left(\frac{x}{L}\right) &= L^2 \frac{1}{L} \left| \frac{x}{L}\right|^2 u\left(\frac{x}{L}\right) = L^2 S_L\left(|y|^2 u(y)\right)\\
    \frac{1}{L^3} u\left(\frac{x}{L}\right)\left|u\left(\frac{x}{L}\right)\right|^2 &= \frac{1}{L^2} S_L(u|u|^2) 
    \end{split}
\end{equation}

\myindent Thus $u$ is a solution if and only if

\begin{equation}
    i\partial_s w - i\frac{L_s}{L}(1 + \Lambda) w - \gamma_s w = -\Delta w + L^4 |y|^2 w + w|w|^2 + Ww
\end{equation}

\myindent Let us here stress an essential algebraic fact : the cubic nonlinearity $u|u|^2$ is preserved through modulation, whereas should we have a nonlinearity of the form $u|u|^{p-1}$ a $L^{3 - p}$ would appear. Our proof doesn't therefore apply to other exponents. 

\myindent Using now that $w(s,y) = e^{-i\frac{b|y|^2}{4}}v(s,y)$ we compute

\begin{equation}
    \begin{split}
    i\partial_s w &= e^{-i\frac{b|y|^2}{4}}\left(\frac{b_s}{4} |y|^2v + i\partial_s v\right)\\
    \Lambda w &=e^{-i\frac{b|y|^2}{4}} y\cdot \left(-i\frac{b}{2} v y + \nabla v\right) = e^{-i\frac{b|y|^2}{4}}\left(-i \frac{b}{2}|y|^2 v + \Lambda v\right)\\
    \Delta w &=  \nabla \cdot e^{-i\frac{b|y|^2}{4}}\left(-i\frac{b}{2} v y + \nabla v\right) = e^{-i\frac{b|y|^2}{4}} \left(\Delta v -i\frac{b}{2} \Lambda v -ib v +\frac{b^2}{4}|y|^2 v -i \frac{b}{2} \Lambda v\right)
    \end{split}
\end{equation}

which yields the proposition
\end{proof}

\subsection{\textit{Hamiltonian Structure and Resonant Trajectory}}

\myindent The following result in proved in \cite{faou2020weakly} very thoroughly. We will not explain the proof here, which relies on carefully lead considerations on Hamiltonian dynamical systems.

\begin{proposition}[resonant trajectory]\label{restraj}
    Let 
    
    \begin{equation}
        \beta(s) := -\frac{\sin(4s)}{s \log s} \qquad s > 1
    \end{equation}
    
    \myindent There exists a $s_0 > 1$ and a solution with $\mathcal{C}^{\infty}$ regularity $(L,b)$ over $(s_0, + \infty)$ to the system 
    
    \begin{equation}
    \begin{cases} L^4 -\frac{b_s}{4} + \frac{b^2}{4} +  \frac{L_s}{L}\frac{b}{2} &= 1 + \beta(s) \\
    \frac{L_s}{L} + b &= 0 \end{cases}
    \end{equation}
    
    such that the associated Hamiltonian 
    
    \begin{equation}
        E(L,b) := \frac{1}{L^2}\left(\frac{b^2}{4} + 1 \right) + L^2 = \log s + O\left(\frac{\log s}{s}\right) \qquad s\to \infty
    \end{equation}
    
    \myindent Moreover there is a $B_0 > 0$ such that
    
    \begin{equation}
        \forall s\geq s_0 \qquad \frac{1}{B_0 \log s} \leq L^2 \leq B_0 \log s \qquad  |b(s)| \leq B_0(\log s)^3
    \end{equation}
    
    \myindent The time defined by $\frac{dt}{ds} = L^2 > 0$ and $t(s_0) = s_0$ over $(s_0, +\infty)$ satisfies 
    
    \begin{equation}
        |t(s) - s| \leq B_0(\log s)^2
    \end{equation}
    
    \myindent Thus, $t(s)$ is globally invertible. Finally, if we see $L(t),b(t)$ as functions of $t$, there holds
    
    \begin{equation}
        \left|\frac{d^k L}{dt^k} (t)\right| + \left|\frac{d^k b}{dt^k} (t)\right| + \left|\frac{d^k}{dt^k} \left(\frac{1}{L}\right)(t)\right| \leq B_k \left(\log t\right)^{\alpha_k}
    \end{equation}
    
\end{proposition}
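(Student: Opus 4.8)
Up to the change of time $s$, the system for $(L,b)$ is formally the same quasi-Hamiltonian system treated in \cite{faou2020weakly}, so I only sketch the mechanism. The second equation is the algebraic constraint $b=-L_s/L$, so the system reduces to a single non-autonomous scalar ODE for $L$. It is convenient to introduce $p:=\tfrac{1}{4L^{2}}$: then $(p,b)$ is a canonical pair ($\{p,b\}=1$) evolving under the time-dependent Hamiltonian $E+4\beta(s)\,p$, where
\begin{equation}
    E \;=\; \frac{1}{L^{2}}\Big(\frac{b^{2}}{4}+1\Big)+L^{2} \;=\; p(b^{2}+4)+\frac{1}{4p},
\end{equation}
and differentiating $E$ along the flow gives the identity $\tfrac{dE}{ds}=-\tfrac{2b\beta(s)}{L^{2}}=-4\beta(s)\,\tfrac{dp}{ds}$.

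\textbf{The unperturbed flow is isochronous.} When $\beta\equiv 0$, $E$ is conserved and the quadrature $\big(\tfrac{dp}{ds}\big)^{2}=4Ep-16p^{2}-1$ integrates explicitly:
\begin{equation}
    p(s) \;=\; \frac{E}{8}+\frac{\sqrt{E^{2}-4}}{8}\cos(4s+\phi_{0}),
\end{equation}
so every orbit with $E\geq 2$ is periodic of period exactly $\tfrac{\pi}{2}$ in $s$, the angular frequency being $4$. Hence one may pass to action–angle coordinates $(A,\phi)$, with $A$ an affine function of $E$ and $\tfrac{d\phi}{ds}=4$ for the unperturbed flow. As a by-product, the $s$-average of $L^{2}=\tfrac{1}{4p}$ over one unperturbed period equals $1$, independently of $E$.

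\textbf{Resonant capture.} In action–angle variables the perturbed system reads $\tfrac{dA}{ds}=\beta(s)\,F(A,\phi)$, $\tfrac{d\phi}{ds}=4+\beta(s)\,G(A,\phi)$, where by the previous step $F(A,\phi)=c(A)\sin\phi$ with $c(A)$ comparable to $A$ as $A\to\infty$. The point of the choice $\beta(s)=-\sin(4s)/(s\log s)$ is that it is resonant with the angular frequency $4$: introducing the slow phase $\psi:=\phi-4s$ and averaging over one period — treating $A$ and the coefficient $1/(s\log s)$ as frozen, and discarding the non-resonant harmonics (which after one integration by parts contribute only negligibly to the long-time behaviour) — one is left with the reduced slow system
\begin{equation}
    \frac{dA}{ds} \;\approx\; -\,\frac{c(A)}{2\,s\log s}\cos\psi, \qquad \frac{d\psi}{ds} \;\approx\; (\text{small}).
\end{equation}
One then exhibits a trajectory captured at the resonant phase $\psi\equiv\pi$, so that $\cos\psi\equiv-1$: along it $\tfrac{dA}{ds}\approx\tfrac{c(A)}{2s\log s}$, and since $c(A)$ is comparable to $A$ this integrates (using $\int^{s}\tfrac{ds'}{s'\log s'}=\log\log s$) to $\log A\sim\log\log s$, i.e.\ $A$ and $E$ grow like $\log s$; choosing the initial datum appropriately pins the leading coefficient of $E$ to exactly $1$, giving $E(s)=\log s+O(\log s/s)$.

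\textbf{Propagation of bounds, and the hard point.} The remaining assertions are bookkeeping. From $E=\tfrac{1}{L^{2}}\big(\tfrac{b^{2}}{4}+1\big)+L^{2}$, the orbit geometry ($L^{2}$ sweeping between values comparable to $1/E$ and to $E$) and the elementary inequality $b^{2}\leq 4EL^{2}$, one reads off $\tfrac{1}{B_{0}\log s}\leq L^{2}\leq B_{0}\log s$ and $|b(s)|\leq B_{0}(\log s)^{3}$. Since the unperturbed period-average of $L^{2}$ equals $1$, the quantity $t(s)-s=\int_{s_{0}}^{s}(L^{2}-1)\,ds'$ accumulates only the slow drift of this mean caused by $\beta$, which is $O(\log s/s)$ and integrates to $O((\log s)^{2})$; in particular $t$ is strictly increasing, hence globally invertible. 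Finally $|\partial_{t}^{k}L|+|\partial_{t}^{k}b|+|\partial_{t}^{k}(1/L)|\leq B_{k}(\log t)^{\alpha_{k}}$ follows by differentiating the system repeatedly and inducting on $k$, using the bounds already obtained for the lower-order derivatives together with the smoothness of $\beta$ and of $\tfrac{dt}{ds}=L^{2}$. The genuinely delicate step — the reason \cite{faou2020weakly} is invoked rather than reproduced — is the resonant capture above: proving that the resonant phase $\psi\equiv\pi$ is attracting for a suitably chosen initial datum, so that $E$ truly grows instead of the perturbation averaging to zero, and controlling every error term uniformly as $s\to\infty$ while $E\to\infty$. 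This is a KAM / Arnold-diffusion-type analysis; everything else is routine.
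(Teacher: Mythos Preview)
The paper does not prove this proposition at all: it states explicitly that ``the following result is proved in \cite{faou2020weakly} very thoroughly. We will not explain the proof here, which relies on carefully lead considerations on Hamiltonian dynamical systems.'' So there is no ``paper's own proof'' to compare against; your sketch already supplies more than the paper does.

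Your outline is correct and matches the mechanism in \cite{faou2020weakly}. The reduction via $p=\tfrac{1}{4L^{2}}$, the computation $\tfrac{dE}{ds}=-2b\beta(s)/L^{2}=-4\beta(s)\,p_{s}$, the explicit isochronous unperturbed solution $p(s)=\tfrac{E}{8}+\tfrac{\sqrt{E^{2}-4}}{8}\cos(4s+\phi_{0})$ with angular frequency $4$, and the observation that the period-average of $L^{2}=1/(4p)$ equals $1$ (via $\int_{0}^{2\pi}\tfrac{d\theta}{a+b\cos\theta}=\tfrac{2\pi}{\sqrt{a^{2}-b^{2}}}$) are all accurate. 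The resonant-capture sketch is the right picture: in action--angle variables $\tfrac{dE}{ds}=2\beta(s)\sqrt{E^{2}-4}\,\sin\phi$, and with $\beta(s)=-\sin(4s)/(s\log s)$ the product $\sin(4s)\sin\phi$ has the secular piece $\tfrac12\cos\psi$ with $\psi=\phi-4s$; locking $\psi\equiv\pi$ gives $\tfrac{dE}{ds}\approx\sqrt{E^{2}-4}/(s\log s)$, whence $E\sim c\log s$. One small quibble: the leading coefficient $1$ in $E=\log s+O(\log s/s)$ is not pinned by the initial datum alone but by the fact that $\sqrt{E^{2}-4}/E\to 1$, so the averaged equation is asymptotically $\tfrac{d\log E}{ds}=\tfrac{1}{s\log s}$ with unit coefficient; the initial datum only fixes the lower-order correction. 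Your derivation of the $L^{2}$, $b$, $t(s)-s$, and higher-derivative bounds from $E$ is fine (indeed $b^{2}\leq 4EL^{2}$ gives $|b|\lesssim\log s$, stronger than the stated $(\log s)^{3}$). As you note, the substantive analytic work --- showing the resonant phase is genuinely captured and controlling all error terms uniformly as $E\to\infty$ --- is what \cite{faou2020weakly} carries out, and the paper under review simply imports that result.
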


\section{Soliton + perturbation decomposition and change of equation}

\myindent Putting together propositions \eqref{modeq} and \eqref{restraj}, the problem reduces to solving the equation

\begin{equation}\label{eqmod1}
    i\partial_s v = H v + \beta(s) |y|^2 v + \gamma_s v + v|v|^2 + W(s,y) v
\end{equation}

\myindent Let $\lambda = 2 + \eps$ with $\eps > 0$ small enough. Set $Q := Q_{\lambda}$ the corresponding soliton built in section III and set

\begin{equation}
    \gamma_s := -\lambda
\end{equation}

\myindent As $\beta(s)$ and $W(s,y)$ should decay to $0$ when $s\to \infty$, equation \eqref{eqmod1} roughly becomes when $s \to \infty$ 

\begin{equation}
    i\partial_s v = H v + v|v|^2 - \lambda v
\end{equation}

to which $v(s,y) = Q(y)$ is a stationary solution. We will therefore try and find a solution to \eqref{eqmod1} of the form 

\begin{equation}
    v(s,y) = Q(y) + w(s,y)
\end{equation}

where $w(s,y)$ is a perturbation decaying to zero when $s\to \infty$. Now, the interest of such a solution is that asymptotically $u$ will be equivalent to the bubble \eqref{ansatz1} whose $H_x^1$ norm is related to the square root of the energy $E(L(t),b(t))$ introduced in proposition \eqref{restraj} thus is grows as $(\log(t))^{\frac{1}{2}}$ when $t\to \infty$. 

\myindent This section and the following one are devoted to proving 

\begin{proposition}\label{princprop}
    Let $\beta(s) = -\frac{\sin(4s)}{s\log s}$. There exists constants $\alpha \in \R$, $\eps > 0$ and $s_0 > 1$ such that if we set 
    
    \begin{equation}
        W(s,y) := -\alpha \beta(s)Q_{2 + \eps}(y)
    \end{equation}
    
    then there exists a constant $B > 0$ and a solution $v \in \mathcal{C}^1((s_0,+\infty),H_x^3)$ such that
    
    \begin{equation}
        i\partial_sv = H v + \beta(s)|y|^2 v - \lambda v + v|v|^2 + W(s,y) v \qquad s\in[s_0, +\infty)
    \end{equation}
    
    where if
    
    \begin{equation}
        v(s,y) = Q(y) + w(s,y)
    \end{equation}
    
    there holds 
    
    \begin{equation} \label{boundw}
        \forall s\in[s_0, +\infty), \ \|w(s)\|_{H_x^{3}} \leq \frac{B}{s \log s}
    \end{equation}
\end{proposition}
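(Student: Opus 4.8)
The plan is to construct $v = Q + w$ by the backward integration scheme announced in the introduction, so that the heart of the matter is producing a perturbation $w$ solving the equation obtained by substituting $v = Q + w$ into \eqref{eqmod1}. Writing $Q = Q_{2+\eps}$ and using $HQ - \lambda Q + Q^3 = 0$, one finds that $w$ must satisfy a forced equation of the schematic form
\begin{equation}
i\partial_s w = \mathcal{H}_\eps w + F(s,y) + R(s,w),
\end{equation}
where $\mathcal{H}_\eps$ is the linearized operator around $Q$ (acting on real and imaginary parts, built from $H - \lambda$ plus multiplication by $Q^2$ and $2Q^2$ on the two components), where $F(s,y) = \beta(s)\big(|y|^2 Q - \alpha Q^2 \cdot Q\big)$ up to the precise bookkeeping of the $W v$ term, and where $R(s,w)$ collects the remaining terms: the genuinely nonlinear contributions $w|w|^2 + 2Q|w|^2 + \dots$ together with the terms $\beta(s)|y|^2 w$ and $W(s,y)w$ that are linear in $w$ but carry the small factor $\beta(s)$. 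For $\eps$ small, Proposition \eqref{infnorm} makes $\|Q^2\|$ small as a multiplier on every $H_x^k$, so $\mathcal{H}_\eps$ is a small bounded perturbation of $H - \lambda = H - 2 - \eps$; by Lemma \eqref{spec} its spectrum is close to $\{4n - \eps\}_{n\ge 0}$, and in particular the bottom eigenvalue is $-\eps + O(\eps^{3/2})$, strictly negative and well separated from the rest.

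Next I would set up the functional-analytic framework. Fix $M > s_0$ and solve the above evolution equation \emph{backward} from the terminal condition $w^M(M) = 0$; local existence in $H_x^3$ (an algebra) is standard since $\mathcal{H}_\eps$ generates a group on each $H_x^k$ and the nonlinearity is locally Lipschitz there. The crucial step is a \emph{uniform-in-$M$ a priori bound} of the form $\|w^M(s)\|_{H_x^3} \le B/(s\log s)$ on $[s_0,M]$. Here is where the free parameter $\alpha$ is used: the forcing $F$ is essentially $\beta(s)$ times a fixed profile, and the component of that profile along the unstable/near-zero eigendirection of $\mathcal{H}_\eps$ must be killed, because integrating a resonant forcing against a slowly decaying weight like $1/(s\log s)$ would produce growth rather than the claimed decay. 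Choosing $\alpha$ so that $|y|^2 Q - \alpha |Q|^2 Q$ is orthogonal (in the relevant pairing) to the bottom eigenfunction of $\mathcal{H}_\eps$ — which is possible for $\eps$ small because $\langle |Q|^2 Q, h_0\rangle \ne 0$ while $\langle |y|^2 Q, h_0 \rangle$ is a fixed Gaussian moment — removes the resonance. With that choice, a Duhamel/energy estimate in $H_x^3$, using that the spectral projection onto the remaining modes enjoys decay or at worst boundedness and that $\beta \in L^1$ near $+\infty$ with $\int_s^\infty |\beta| \lesssim 1/\log s$, closes the bootstrap: one shows that if $\|w^M\|_{H_x^3} \le B/(s\log s)$ on $[s,M]$ then the Duhamel integral of $F + R$ is bounded by $\tfrac12 B/(s\log s)$, the nonlinear and $\beta$-weighted-linear terms being quadratically small or carrying an extra $\beta$ factor.

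Finally I would pass to the limit $M \to \infty$. Using the same kind of estimate on the difference $w^{M} - w^{M'}$ for $M < M'$ (both solve the equation on $[s_0,M]$, they differ only through the terminal data, and the difference equation is linear with small coefficients), one gets that $(w^M)$ is Cauchy in $\mathcal{C}([s_0,T];H_x^3)$ for every $T$, hence converges locally uniformly to a solution $w$ on $[s_0,+\infty)$ still satisfying \eqref{boundw}; standard parabolic-free bootstrapping in $s$ upgrades $w$ to $\mathcal{C}^1$ in time with values in $H_x^3$, and $v = Q + w$ is then the desired solution. I expect the main obstacle to be the uniform a priori estimate: one must simultaneously handle the non-self-adjoint linearized flow $e^{-is\mathcal{H}_\eps}$ (whose spectrum, after the $\alpha$-correction kills the bottom mode, still sits very close to the imaginary axis so gives no genuine decay, only boundedness of the relevant projections), the borderline integrability of $\beta$, and the fact that the target decay rate $1/(s\log s)$ is exactly the size of $\beta$ itself, leaving no room to spare — so the constants in Lemma \eqref{contnorm2} and Proposition \eqref{infnorm}, and the precise gain from the resonance removal, have to be tracked carefully rather than absorbed.
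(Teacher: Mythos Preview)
Your overall architecture --- linearize around $Q$, solve backward from $w^M(M)=0$, close a uniform bootstrap, pass to the limit as a Cauchy sequence --- matches the paper's Sections~5--6. Two of the ingredients you rely on, however, are wrong, and together they are exactly what makes the estimate delicate.

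First, $\beta$ is \emph{not} in $L^1$ at infinity: $\int_s^\infty \frac{d\sigma}{\sigma\log\sigma}=+\infty$, so your claim $\int_s^\infty |\beta|\lesssim 1/\log s$ is false and cannot drive the bootstrap. The decay of the linear Duhamel piece $r^M(s)=\int_s^M e^{(s-\sigma)\mathcal{L}}IR(\sigma)\,d\sigma$ comes instead from the \emph{oscillation} of $\sin(4\sigma)$ against $\cos((s-\sigma)\mu_n)$ and $\sin((s-\sigma)\mu_n)$, via integration by parts in $\sigma$; one gains a factor $1/|\mu_n\pm4|$ and this is what produces the uniform bound $\|r^M(s)\|_{H_x^3}\le B_0/(s\log s)$.

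Second, and for the same reason, the mode that $\alpha$ must kill is not the bottom/near-zero eigendirection but the one with $\mu_1\approx 4$: the oscillatory gain $1/|\mu_n-4|$ collapses exactly when $\mu_n=4$, which threatens only $n=1$. The paper therefore chooses $\alpha$ so that $H_+^{-1/2}(|y|^2Q-\alpha Q^2)\perp\psi_1$, the nondegeneracy check being $\langle h_0^2,h_1\rangle\neq0$ (not $h_0$). The genuine zero mode $\mu_0=0$ is handled by a completely different mechanism that you do not mention: from $L^2$ conservation of $v=Q+w^M$ one gets $\langle w_1^M,Q\rangle=-\tfrac12\|w^M\|_{L^2}^2$, which is \emph{quadratic} rather than linear in $w^M$; this, together with the conserved energies $E$, $\mathcal{E}$ and the scalar $\langle f_2^M,\rho\rangle$ with $\rho=H_+^{-1}Q$, is what makes the $H_x^3$ energy coercive and lets the bootstrap close at the borderline rate $1/(s\log s)$.
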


\myindent We indicate that we cannot control only the $H_x^1$ norm of $w(s)$ as $H_x^1$ fails to be an algebra thus we could not control the cubic nonlinearity in the equation. Moreover the proof could be generalized to prove that the $H_x^{2k +1}$ norms of $w(s)$ all decay faster than $\frac{1}{s\log(s)}$ as $s \to \infty$. As indicated in the introduction, this would result in controlling the $H_x^k$ norms of the solution $u$ in theorem \eqref{main}. However, the proof would greatly increase in technical complexity without any real new idea, thus we have chosen to limit ourselves to the study of the $H_x^1$ norm.
 
\subsection{\textit{Change of function}}
\myindent As $HQ + Q^3 = \lambda Q$ (where we will denote $\lambda = 2+\eps$) we need to find $w$ a solution to 

\begin{equation}\label{eqw}
    i\partial_s w = (H-\lambda)w + 2Q^2 w + Q^2 \overline{w} + \beta(s)|y|^2 w + W(s,y) w + R(s) + N(s)
\end{equation}

where 

\begin{equation}
    \begin{split}
    R(s) &= \beta(s)|y|^2 Q(y) + W(s,y) Q(y)\\
    N(s) &= 2Q|w|^2 + Qw^2 + w|w|^2
    \end{split}
\end{equation}

\myindent The equation is no longer $\C$-linear : we thus set $w = \begin{pmatrix} w_1 \\ w_2\end{pmatrix}$ and write

\begin{equation}\label{eqw1}
\partial_s w = \mathcal{L} w + I\left(\beta(s)|y|^2 + W(s,y)\right) w + IR(s) + IN(s)
\end{equation}

where 

\begin{equation}
    \begin{split}
    \mathcal{L} &:= \begin{pmatrix} 0 & H_- \\ - H_+ & 0 \end{pmatrix} = \begin{pmatrix} 0 & H + Q^2 - \lambda \\ -(H + 3Q^2 - \lambda) & 0 \end{pmatrix}\\
    I &:= \begin{pmatrix} 0 & 1 \\ -1 & 0\end{pmatrix}\\
    R(s,y) &= \begin{pmatrix} \beta(s)|y|^2 Q(y) + W(s,y)Q(y) \\
    0\end{pmatrix}\\
    N(s,y) &= \begin{pmatrix} 2Q(y)|w(s,y)|^2 + Q(y)\left(w_1(s,y)^2 - w_2(s,y)^2\right) + w_1(s,y)|w(s,y)|^2\\
    2Q(y)w_1(s,y) w_2(s,y) + w_2(s,y)|w(s,y)|^2 \end{pmatrix}
    \end{split}
\end{equation}

\subsection{\textit{Spectral theory}}

\myindent We can write $H_{+/-} = H - 2 + A_{+/-}$ with $A_+ := 3Q^2 - \eps$ and $A_- := Q^2 - \eps$. Using lemma \eqref{infnorm} we find that taking $\eps > 0$ small enough yields that $A_{+/-}$ is a bounded operator over $L^2$ with operator norm arbitrarily small as $\eps \to 0$. Lemma \eqref{spec} yields that $H_{+/-}$ has a discreet spectrum with simple eigenvalues $\lambda_0^{+/-} < \lambda_1^{+/-} < ....$ and

\begin{equation}
    |\lambda_n^{+/-} - 4n| < \alpha
\end{equation}

uniformly in $n$, where $\alpha > 0$ is a given constant that we can choose arbitrarily small as $\eps \to 0$. We then observe that

\begin{equation}
    H_- Q = 0
\end{equation}

\myindent This follows from the definition of $Q$. Thus, $\lambda_0^- = 0$. Therefore, $H_-$ is a nonnegative self-adjoint operator with compact resolvent over $L^2(\R^2;\R)$ with kernel $\R Q$. As $H_+ = H_- + 2Q^2$ we infer that $\lambda_0^+ > 0$ which yields that $H_+$ is positive. Moreover we see straightforwardly that

\begin{lemma}\label{eqnorm+}
    Let $k \geq 0$. The norm defined by 
    
    \begin{equation}
        \|u\|_{H_+^k}^2 := \scal{H_+^k u}{u}
    \end{equation}
    
    is equivalent to the $H_x^k$ norms
\end{lemma}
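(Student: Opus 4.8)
The plan is to establish the two‑sided bound $c_k\|u\|_{H_x^k}^2\leq\|u\|_{H_+^k}^2\leq C_k\|u\|_{H_x^k}^2$ by induction on $k$, the constants being allowed to depend on the fixed small $\eps$ (they will enter through $\|A_+\|$ and through the positivity constant $\lambda_0^+>0$ of $H_+$ obtained above). For $k=0$ the identity is the definition of $\|\cdot\|_{L^2}$. For $k=1$, write $H_+=H-2+A_+$, so that $\scal{H_+u}{u}=\scal{Hu}{u}-2\|u\|_{L^2}^2+\scal{A_+u}{u}$. The upper bound follows from $|\scal{A_+u}{u}|\leq\|A_+\|\,\|u\|_{L^2}^2$ together with $\|u\|_{L^2}^2\leq\tfrac12\scal{Hu}{u}$, a consequence of $H\geq2$. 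For the lower bound I would combine the two inequalities $\scal{H_+u}{u}\geq\scal{Hu}{u}-(2+\|A_+\|)\|u\|_{L^2}^2$ and $\scal{H_+u}{u}\geq\lambda_0^+\|u\|_{L^2}^2$ (the latter because Lemma~\ref{spec} makes $H_+$ self‑adjoint with discrete spectrum and $\lambda_0^+>0$) in a convex combination whose weights are chosen so that the $\|u\|_{L^2}^2$ terms cancel; this gives $\scal{H_+u}{u}\geq c\,\scal{Hu}{u}=c\,\|u\|_{H_x^1}^2$ and settles $k=1$.

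The heart of the general case is an auxiliary claim: for every fixed $r\geq0$ (I only need $r\in\{0,1\}$) and every $m\geq0$, the quantity $\|H_+^m u\|_{H_x^r}$ is equivalent to $\|u\|_{H_x^{2m+r}}$. To prove it, expand $H_+^m=(H+(A_+-2))^m$ and remove the term $H^m$: each remaining summand is a word in $H$ and the bounded multiplication operator $A_+-2$ containing at least one factor $A_+-2$, hence at most $m-1$ factors $H$. Using that multiplication by $Q^2$ maps every $H_x^s$ boundedly to itself (by the algebra Lemma~\ref{algebra} and $Q\in\bigcap_s H_x^s$ from Proposition~\ref{exunic}, supplemented for small $s$ by the $L^\infty$ bounds of Proposition~\ref{infnorm}), each such word lowers regularity by at most $2(m-1)$, hence maps $H_x^{2m+r-1}$ boundedly into a space contained in $H_x^r$; therefore $\|(H_+^m-H^m)u\|_{H_x^r}\leq C\|u\|_{H_x^{2m+r-1}}$. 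Writing $H_+^m u=H^m u+(H_+^m-H^m)u$ and using $\|H^m u\|_{H_x^r}=\|u\|_{H_x^{2m+r}}$ gives the upper half immediately. For the lower half I would bound $\|u\|_{H_x^{2m+r}}\leq\|H_+^m u\|_{H_x^r}+C\|u\|_{H_x^{2m+r-1}}$, insert the norm‑estimate Lemma~\ref{contnorm2}, namely $\|u\|_{H_x^{2m+r-1}}^2\leq\delta\|u\|_{H_x^{2m+r}}^2+C_\delta\|u\|_{L^2}^2$ with $\delta$ small, absorb the leading term on the left, and finally control the surviving $\|u\|_{L^2}^2$ by $(\lambda_0^+)^{-2m}\|H_+^m u\|_{L^2}^2\leq(\lambda_0^+)^{-2m}\|H_+^m u\|_{H_x^r}^2$, using $H_+\geq\lambda_0^+$.

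The lemma then follows by distinguishing parity. If $k=2m$, then $\|u\|_{H_+^{2m}}^2=\|H_+^m u\|_{L^2}^2=\|H_+^m u\|_{H_x^0}^2$, equivalent to $\|u\|_{H_x^{2m}}^2$ by the auxiliary claim with $r=0$. If $k=2m+1$, then since the powers of $H_+$ commute, $\scal{H_+^{2m+1}u}{u}=\scal{H_+v}{v}$ with $v:=H_+^m u\in H_x^1$, which by the already‑proven case $k=1$ applied to $v$ is equivalent to $\scal{Hv}{v}=\|H_+^m u\|_{H_x^1}^2$, equivalent in turn to $\|u\|_{H_x^{2m+1}}^2$ by the auxiliary claim with $r=1$.

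I expect the main obstacle to be the general‑$k$ step, and inside it two points: first, making precise that $H_+^m-H^m$ is genuinely of lower order, which rests on the boundedness of multiplication by $Q^2$ on all the spaces $H_x^s$; and second, the absorption argument, which crucially uses both Lemma~\ref{contnorm2} and the strict positivity $\lambda_0^+>0$ to kill the $\|u\|_{L^2}^2$ remainder. The odd exponents are the most delicate spot, since there $\|\cdot\|_{H_+^k}$ is built from a genuine fractional power of $H_+$; this is circumvented by peeling off one integer power of $H_+$ and invoking the $k=1$ form equivalence rather than manipulating $H_+^{1/2}$ directly. As usual, all formal computations are to be performed first on a dense core — for instance finite linear combinations of the eigenfunctions of $H$, or the Schwartz class — and then extended by density.
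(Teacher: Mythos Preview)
Your proof is correct. The paper does not actually prove this lemma: it is introduced with the phrase ``Moreover we see straightforwardly that'' and left without argument. Your write-up therefore supplies what the paper omits.

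The approach you take---reducing to an auxiliary equivalence $\|H_+^m u\|_{H_x^r}\sim\|u\|_{H_x^{2m+r}}$ via the expansion $H_+^m=H^m+(\text{lower order})$, then splitting into even and odd $k$---is the natural one, and every step checks out. The key ingredients (positivity $H_+\geq\lambda_0^+>0$, boundedness of multiplication by $Q^2$ on every $H_x^s$, and the interpolation-type Lemma~\ref{contnorm2} for the absorption) are exactly the right tools, and your handling of the odd case by peeling off one power of $H_+$ and invoking the $k=1$ form equivalence is clean and avoids any genuine fractional-power manipulation. One minor remark: in the lower-order estimate you could in fact write $\|(H_+^m-H^m)u\|_{H_x^r}\leq C\|u\|_{H_x^{2m+r-2}}$ (each remaining word contains at most $m-1$ copies of $H$), which is slightly sharper than the $2m+r-1$ you state, but your weaker bound is of course sufficient for the absorption argument.
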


\subsection{\textit{Preserved Energies}}

\myindent In order to estimate the $H_x^k$ norm of $w$, we wish to exhibit quantities preserved by the linearized flow $\exp(s\mathcal{L})$ of the equation.

\begin{lemma}[First preserved energy]
    Let $\mathcal{H} := \begin{pmatrix} H_+ & 0 \\ 0 & H_-\end{pmatrix}$. For all $u \in H_x^1$, set
    
    \begin{equation}
        E(u) := \frac{1}{2}\scal{\mathcal{H}u}{u} = \frac{1}{2}\scal{H_+ u_1}{u_1} + \frac{1}{2}\scal{H_- u_2}{u_2}
    \end{equation}
    
    \myindent Then the energy $E$ is preserved by the flow $\exp(s\mathcal{L})$
\end{lemma}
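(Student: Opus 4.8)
The plan is to differentiate $E$ along a trajectory of the linear flow and check that the derivative vanishes identically, the cancellation being a purely algebraic consequence of the self-adjointness of $H_{\pm}$ together with the skew structure of $\mathcal{L}$ relative to $\mathcal{H}$. Fix first a datum $u_0 \in H_x^2$, so that $\mathcal{L}u_0 \in H_x^1$ and all the pairings below are finite, and set $u(s) := \exp(s\mathcal{L})u_0$, so that $\partial_s u = \mathcal{L}u$. Since $H_+$ and $H_-$ are self-adjoint for the real pairing $\scal{\cdot}{\cdot}$, the block operator $\mathcal{H}$ is self-adjoint, whence
\begin{equation}
\frac{d}{ds}E(u(s)) = \frac{1}{2}\scal{\mathcal{H}\partial_s u}{u} + \frac{1}{2}\scal{\mathcal{H}u}{\partial_s u} = \scal{\mathcal{H}\mathcal{L}u}{u} .
\end{equation}

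Next I would compute the block product
\begin{equation}
\mathcal{H}\mathcal{L} = \begin{pmatrix} H_+ & 0 \\ 0 & H_- \end{pmatrix}\begin{pmatrix} 0 & H_- \\ -H_+ & 0 \end{pmatrix} = \begin{pmatrix} 0 & H_+ H_- \\ -H_- H_+ & 0 \end{pmatrix},
\end{equation}
so that, writing $u = \begin{pmatrix} u_1 \\ u_2 \end{pmatrix}$,
\begin{equation}
\scal{\mathcal{H}\mathcal{L}u}{u} = \scal{H_+ H_- u_2}{u_1} - \scal{H_- H_+ u_1}{u_2}.
\end{equation}
Using the self-adjointness of $H_+$ and then of $H_-$, together with the symmetry of the real pairing, $\scal{H_+ H_- u_2}{u_1} = \scal{H_- u_2}{H_+ u_1} = \scal{u_2}{H_- H_+ u_1} = \scal{H_- H_+ u_1}{u_2}$, so the two terms cancel and $\frac{d}{ds}E(u(s)) = 0$. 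Hence $E(\exp(s\mathcal{L})u_0) = E(u_0)$ for every $s$ and every $u_0 \in H_x^2$.

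To extend the identity to all $u_0 \in H_x^1$, I would invoke that $E$ is a continuous quadratic form on $H_x^1$ — by lemma \eqref{eqnorm+} it is comparable to $\|\cdot\|_{H_x^1}^2$ — that $\exp(s\mathcal{L})$ is a bounded operator on $H_x^1$, and that $H_x^2$ is dense in $H_x^1$; passing to the limit then yields preservation on all of $H_x^1$.

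I do not expect a genuine obstacle here: the only point requiring care is the functional-analytic setup, namely making precise $\exp(s\mathcal{L})$ as a (strongly continuous) linear flow on $H_x^1$ and justifying that for $H_x^2$ data the map $s \mapsto u(s)$ is differentiable into the space in which the above chain-rule computation takes place. Once that is granted, everything reduces to the algebraic cancellation above.
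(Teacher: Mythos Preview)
Your proof is correct and follows exactly the paper's approach: the paper's entire proof is the single observation that $\scal{\mathcal{H}\mathcal{L}u}{u}=0$, and you have spelled out the block-matrix computation and the self-adjointness argument that justify this identity, together with the density extension from $H_x^2$ to $H_x^1$. One small quibble: your appeal to lemma \eqref{eqnorm+} to say $E$ is \emph{comparable} to $\|\cdot\|_{H_x^1}^2$ is not quite right, since $H_-$ has a kernel and only the upper bound $E(u)\leq C\|u\|_{H_x^1}^2$ holds in general; but the upper bound is all you need for continuity, so the density argument goes through unchanged.
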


\begin{proof}
    It is enough to see that
    
    \begin{equation}
        \scal{\mathcal{H}\mathcal{L}u}{u} = 0 \qquad \forall u
    \end{equation}
\end{proof}

\myindent Now, as $H_{+/-} = H + A_{+/-}$ with $A_{+/-}$ a bounded operator over $L^2(\R^2;\R)$, we see that in order to bound the $H_x^1$ norm of $u$, it is enough to bound $E(u)$ and $\|u\|_{L^2}$. However, as $H_+$ is positive and as the kernel of $H_-$ is $\R Q$ we find that

\begin{equation}
    \exists c,C> 0, \forall u \in H^1_x \qquad c\|u\|_{H_x^1}^2 \leq E(u) + \scal{u_2}{Q}^2 \leq C\|u\|_{H_x^1}^2
\end{equation}

\myindent Let us now define 

\begin{equation}
    \rho := H_+^{-1} Q \in L^2(\R^2;\R)
\end{equation}

\myindent Up to a smaller choice of $\eps$, there holds 

\begin{equation}
    \scal{\rho}{Q} > 0
\end{equation}

\myindent Indeed, lemma \eqref{eqderiv} ensures that in fact $\rho = \partial_{\lambda} Q$ so that 

\begin{equation}
\scal{\rho}{Q} = \frac{1}{2}\partial_{\lambda} \|Q_{\lambda}\|_{L^2}^2 > 0
\end{equation}

\myindent The inequality follows from $\lambda \to \|Q_{\lambda}\|_{L^2}$ being increasing (lemma \eqref{normincr}) thus its derivative is almost everywhere positive.

\myindent This enables us to change slightly the previous estimate into

\begin{lemma}
    There exists $c,C> 0$, such that 
    
    \begin{equation}
        \forall u \in H^1_x \qquad c\|u\|_{H_x^1}^2 \leq E(u) + \scal{u_2}{\rho}^2 \leq C\|u\|_{H_x^1}^2
    \end{equation}
\end{lemma}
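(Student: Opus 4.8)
The plan is to deduce this refined estimate from the previously established one, namely that there are constants $c_0, C_0 > 0$ with
\begin{equation}
    c_0\|u\|_{H_x^1}^2 \leq E(u) + \scal{u_2}{Q}^2 \leq C_0\|u\|_{H_x^1}^2 \qquad \forall u \in H_x^1,
\end{equation}
by showing that $\scal{u_2}{Q}^2$ and $\scal{u_2}{\rho}^2$ are interchangeable once we also have control of $E(u)$. The key observation is that $\rho = H_+^{-1}Q$ and $Q \in \ker H_-$, so both pairings only see the second component $u_2$, which is precisely the component governed by $H_-$ inside $E(u)$. Since $\ker H_- = \R Q$, the quotient space $L^2(\R^2;\R)/\R Q$ carries the norm $\scal{H_- \cdot}{\cdot}^{1/2}$ equivalent to the $H_x^1$ norm on $Q^{\perp}$, and this is what lets us compare the two linear functionals.

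Concretely, I would write $u_2 = a Q + r$ with $a = \scal{u_2}{Q}/\|Q\|_{L^2}^2$ and $r \perp Q$, so that $\scal{H_- u_2}{u_2} = \scal{H_- r}{r}$, which controls $\|r\|_{H_x^1}^2$ up to constants (since $H_-$ is positive on $Q^{\perp}$ with spectral gap, by Lemma \ref{spec}). Then $\scal{u_2}{\rho} = a\scal{Q}{\rho} + \scal{r}{\rho}$. Using $|\scal{r}{\rho}| \leq \|r\|_{L^2}\|\rho\|_{L^2} \leq C\scal{H_- r}{r}^{1/2} \leq C E(u)^{1/2}$ and $\scal{Q}{\rho} > 0$ (the strict positivity established just above via Lemma \ref{normincr}), one gets
\begin{equation}
    |a| \, \scal{Q}{\rho} \leq |\scal{u_2}{\rho}| + C E(u)^{1/2}, \qquad |\scal{u_2}{\rho}| \leq |a|\,\scal{Q}{\rho} + CE(u)^{1/2},
\end{equation}
and symmetrically $|\scal{u_2}{Q}| = |a|\,\|Q\|_{L^2}^2$. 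Squaring and using $(x+y)^2 \leq 2x^2+2y^2$ shows $\scal{u_2}{Q}^2 \leq C(\scal{u_2}{\rho}^2 + E(u))$ and $\scal{u_2}{\rho}^2 \leq C(\scal{u_2}{Q}^2 + E(u))$. Substituting the first into the upper half of the known estimate and the second into the lower half (and absorbing the extra $E(u)$, which is already $\leq C_0\|u\|_{H_x^1}^2$ and $\geq 0$) yields the claimed two-sided bound with new constants $c, C$.

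The only genuinely delicate point is making sure the constant $\scal{Q}{\rho}$ is bounded away from $0$ — but this is exactly the content of the strict inequality $\scal{\rho}{Q} = \frac12 \partial_\lambda\|Q_\lambda\|_{L^2}^2 > 0$ recorded above, valid after shrinking $\eps$; since $\lambda$ (hence $\eps$) is fixed throughout, $\scal{Q}{\rho}$ is a fixed positive number and all constants are legitimate. Everything else is the routine linear-algebra manipulation of splitting off the one-dimensional kernel direction, so I do not expect any real obstacle; the main thing to be careful about is not to circularly reintroduce an uncontrolled $\|u\|_{H_x^1}$ term, which is why the bounds on $\scal{r}{\rho}$ must be routed through $E(u)^{1/2}$ rather than through the full $H_x^1$ norm.
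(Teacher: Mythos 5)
Your proof is correct and does exactly what the paper intends: the paper merely records $\scal{\rho}{Q}>0$ and then asserts the lemma as a ``slight change'' of the preceding estimate with $Q$ in place of $\rho$, and your decomposition $u_2 = aQ + r$ with $r\perp Q$, together with the spectral gap of $H_-$ on $Q^\perp$ and the boundedness of $\rho$, is the natural way to carry that out. The only minor overstatement is the parenthetical claim that $\scal{H_-r}{r}$ controls $\|r\|_{H_x^1}^2$ (the spectral gap directly gives $\|r\|_{L^2}^2$, which is all you actually use; the $H_x^1$ control needs the extra step of writing $H_- = H + A_-$), but this does not affect the argument.
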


\myindent Now, $E(u)$ reveals itself insufficient to properly estimate $w$. Indeed, $H_x^1$ is not an algebra, thus we cannot control the nonlinearity using only $H_x^1$ norm. We therefore introduce another preserved energy

\begin{lemma}[Higher order preserved energy]
    Define $\mathcal{H}_3 := \begin{pmatrix} H_+ H_- H_+& 0 \\ 0 & H_-H_+H_-\end{pmatrix}$ and for $u\in H_x^3$ set
    
    \begin{equation}
        \mathcal{E}(u) := \frac{1}{2}\scal{\mathcal{H}u}{u} = \frac{1}{2}\scal{H_+ H_-H_+ u_1}{u_1} + \frac{1}{2}\scal{H_- H_+ H_- u_2}{u_2}
    \end{equation}
    
    \myindent The energy $\mathcal{E}$ is preserved by the flow $\exp(s\mathcal{L})$. Moreover, there exists $c,C > 0$ such that
    
    \begin{equation}\label{energy}
    \forall u \in H_x^3 \qquad c\scal{H^{3}u}{u} \leq \mathcal{E}(u) + E(u) + \scal{u_2}{\rho}^2 \leq C\scal{H^{3} u}{u}
\end{equation}
\end{lemma}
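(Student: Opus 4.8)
The claim has two parts: first, that $\mathcal{E}$ is preserved by the linearized flow $\exp(s\mathcal{L})$; second, the norm equivalence \eqref{energy}. For the conservation, I would mimic the proof of the first preserved energy: it suffices to check that $\scal{\mathcal{H}_3 \mathcal{L} u}{u} = 0$ for all $u \in H_x^3$. Writing out $\mathcal{L} = \begin{pmatrix} 0 & H_- \\ -H_+ & 0\end{pmatrix}$, one has $\mathcal{H}_3 \mathcal{L} u = \begin{pmatrix} H_+H_-H_+ & 0 \\ 0 & H_-H_+H_-\end{pmatrix}\begin{pmatrix} H_- u_2 \\ -H_+ u_1 \end{pmatrix} = \begin{pmatrix} H_+H_-H_+H_- u_2 \\ -H_-H_+H_-H_+ u_1\end{pmatrix}$, so that $\scal{\mathcal{H}_3\mathcal{L}u}{u} = \scal{H_+H_-H_+H_- u_2}{u_1} - \scal{H_-H_+H_-H_+u_1}{u_2}$, and since $H_+,H_-$ are self-adjoint these two terms are equal and cancel. (One should note $\mathcal{E}$ is real, so differentiating $\tfrac{d}{ds}\mathcal{E}(\exp(s\mathcal{L})u) = \scal{\mathcal{H}_3\mathcal{L}\exp(s\mathcal{L})u}{\exp(s\mathcal{L})u} = 0$ suffices on $H_x^3$.)

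\textbf{The norm equivalence.} The hard part is \eqref{energy}. Using Lemma \ref{eqnorm+}, the $H_x^3$ norm squared $\scal{H^3 u}{u}$ is equivalent to $\scal{H_+^3 u_1}{u_1} + \scal{H_-^3 u_2}{u_2}$ — wait, more precisely to $\|u_1\|_{H_+^3}^2 + \|u_2\|_{H_+^3}^2$, and since $H_+$ and $H_-$ differ by the bounded operator $2Q^2$, also to $\|u_1\|_{H_-^3}^2 + \|u_2\|_{H_-^3}^2$ up to adjusting for the kernel of $H_-$. The issue is that $H_-$ has a kernel $\R Q$, so $\scal{H_-^3 u_2}{u_2}$ alone does not control $\|u_2\|_{H_x^3}^2$ — one needs the additional term $\scal{u_2}{\rho}^2$ to recover the projection of $u_2$ onto $Q$. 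I would first establish the coercivity on the complement of $\ker H_-$: since $H_-$ has a spectral gap above $0$ (its next eigenvalue $\lambda_1^-$ is near $4$), on $\{Q\}^\perp$ one has $\scal{H_- v}{v} \geq c\|v\|_{L^2}^2$ and more generally $\scal{H_-^k v}{v} \gtrsim \|v\|_{H_x^k}^2$ for $v \perp Q$; combining with the fact that $\scal{u_2}{\rho}$ detects the $Q$-component (because $\scal{\rho}{Q} > 0$) gives the lower bound on $u_2$. For $u_1$, since $H_+$ is positive with a spectral gap, $\scal{H_+^3 u_1}{u_1} \gtrsim \|u_1\|_{H_x^3}^2$ directly. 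The terms $\mathcal{E}(u)$, $E(u)$, $\scal{u_2}{\rho}^2$ together then control $\scal{H^3 u}{u}$ from below, and the upper bound is routine since all operators involved are comparable to powers of $H$.

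\textbf{Executing the coercivity.} Concretely, I would decompose $u_2 = a Q + u_2^\perp$ with $u_2^\perp \perp Q$. Then $\scal{H_-H_+H_- u_2}{u_2} = \scal{H_-H_+H_- u_2^\perp}{u_2^\perp}$ since $H_- Q = 0$; because $H_+$ is positive and bounded below on $\{Q\}^\perp$ — actually it is positive everywhere — and $H_-$ restricted to $\{Q\}^\perp$ is bounded below by a positive constant (and more generally $\scal{H_-^2 v}{v} \geq c\scal{H v}{v}$ etc. on that subspace, using the spectral gap and that $H_\pm \sim H$), we get $\mathcal{E}(u) \gtrsim \|u_1\|_{H_x^3}^2 + \|u_2^\perp\|_{H_x^3}^2$, up to adjusting lower-order terms absorbed by $E(u)$. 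Then $\scal{u_2}{\rho}^2 = (a\scal{Q}{\rho} + \scal{u_2^\perp}{\rho})^2 \gtrsim a^2$ once $\|u_2^\perp\|$ is controlled (using $\scal{Q}{\rho}>0$ and $|\scal{u_2^\perp}{\rho}| \leq \|u_2^\perp\|\|\rho\|$), giving $a^2 = \|aQ\|_{H_x^3}^2/\|Q\|_{H_x^3}^2$. Summing recovers $\|u\|_{H_x^3}^2 \lesssim \mathcal{E}(u) + E(u) + \scal{u_2}{\rho}^2$. The main obstacle is bookkeeping the spectral-gap estimates for the \emph{cubic} products $H_-H_+H_-$ and $H_+H_-H_+$ rather than single powers: one must be careful that, e.g., $\scal{H_-H_+H_-v}{v}$ really is comparable to $\|v\|_{H_x^3}^2$ on $\{Q\}^\perp$, which follows because on that subspace $H_- \geq \delta > 0$ so $H_-$, $H_+ = H_- + 2Q^2$, and $H$ are all two-sided comparable as positive operators with compact resolvent and matching eigenvalue asymptotics, hence so are their products of total degree $3$; the lower-order discrepancies (from the non-commutativity and from $2Q^2$) are handled by absorbing them into $E(u)$ and $\|u\|_{L^2}^2$, the latter in turn bounded via Lemma \ref{contnorm2}.
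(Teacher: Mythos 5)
Your conservation argument matches the paper's: the one-line check $\scal{\mathcal{H}_3 \mathcal{L}u}{u}=0$, which you verify correctly via self-adjointness of $H_\pm$. For the coercivity estimate \eqref{energy}, however, you take a genuinely different route.

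The paper never decomposes $u_2 = aQ + u_2^\perp$ and never invokes spectral gaps for the cubic products. It simply expands $H_+H_-H_+ = H^3 + B$ (and likewise $H_-H_+H_-$), where $B$ is a sum of products $A_1A_2A_3$ with each $A_i \in \{H,\, Q^2-\lambda,\, 3Q^2-\lambda\}$ and at most two factors equal to $H$. Since multiplication by $Q^2-\lambda$ is bounded on every $H_x^k$, each such term obeys $|\scal{A_1A_2A_3 u}{u}|\le C\|u\|_{H_x^2}^2$, which Lemma \eqref{contnorm2} absorbs into $\tfrac12\|u\|_{H_x^3}^2 + C\|u\|_{L^2}^2$. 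The leftover $\|u\|_{L^2}^2$ is then controlled by the $H_x^1$-level equivalence $c\|u\|_{H_x^1}^2 \le E(u)+\scal{u_2}{\rho}^2\le C\|u\|_{H_x^1}^2$ established earlier; that lemma already compensates for $\ker H_- = \R Q$, so no fresh splitting of $u_2$ is needed. Your route is morally parallel, but one step is stated too quickly: ``$H_-,H_+,H$ are two-sided comparable positive operators, hence so are their products of total degree $3$'' is not a valid inference. Operator inequalities $0\le A\le B$ do not propagate to $A^2\le B^2$, let alone to non-symmetric products like $H_-H_+H_-$ versus $H^3$, because $t\mapsto t^2$ is not operator monotone. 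The fix is precisely what you gesture at in your final clause and precisely what the paper does: expand $H_\pm = H + (\text{bounded multiplier})$, multiply out the cubic, and absorb everything but $H^3$ via $\|u\|_{H_x^2}^2$, Lemma \eqref{contnorm2}, and the $H_x^1$-equivalence. Once you are driven to that expansion, the decomposition $u_2=aQ+u_2^\perp$ becomes redundant and can be dropped. Replace the appeal to the (false) general principle about products of comparable operators with the explicit expansion, and your argument closes in essentially the paper's way.
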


\begin{proof}
    That $\mathcal{E}$ is preserved is straightforward from the observation 
    
    \begin{equation}
        \scal{\mathcal{H}_3 \mathcal{L} u}{u} = 0 \qquad\forall u
    \end{equation}
    
    \myindent As for the energy estimates, the right inequality is straightforward, and for the left one we write for example that $H_+ H_- H_+ = H^3 + B$ where $B$ is a sum of terms of the form $A_1 A_2 A_3$ where $A_i$ is one of $H,Q^2 -\lambda,3Q^2 -\lambda$ with at most 2 of the $A_i$ equals to $H$. Thus we see that $|\scal{Bu}{u}|\leq C\|u\|_{H_x^2}^2$. Now, lemma \eqref{contnorm2} helps us bound this quantity by $\frac{1}{2}\|u\|_{H_x^3}^2 + C\|u\|_{L^2}^2$. Therefore

\begin{equation}
    \scal{H_+H_-H_+ u_1}{u_1} \geq \frac{1}{2}\|u_1\|_{H_x^3}^2 - C\|u_1\|_{L^2}^2
\end{equation}

and we know that $\|u_1\|_{L^2}$ can be bounded using $E(u)$ and $\scal{u_2}{\rho}$. We bound $\|u_2\|_{H_x^3}$ in the same way.

\end{proof}

\myindent Now, to bound the $H_x^3$ norm (which is an algebra norm) of a function $u$, one needs only bound $\mathcal{E}(u)$, $E(u)$, and $\scal{u_2}{\rho}^2$.

\subsection{\textit{Computation of $\exp(s\mathcal{L})$ and more spectral theory}}

\myindent We need to have an explicit expression for the linearized flow $\exp(s\mathcal{L})$. We first give a $\textit{formal}$ computation : we can heuristically compute

\begin{equation}
    \begin{split}
    &\mathcal{L}^{2n} =(-1)^n \begin{pmatrix} (H_-H_+)^n & 0 \\0 & (H_+H_-)^n\end{pmatrix}\\
    &\mathcal{L}^{2n+1} = (-1)^n\begin{pmatrix} 0 & H_-(H_+H_-)^n \\ -H_+(H_-H_+)^n & 0 \end{pmatrix}
    \end{split}
\end{equation}

\myindent Next, we see that $H_-$ and $H_+$ are nonnegative, so we can properly define over $L^2(\R^2;\R)$ the unbounded operator 

\begin{equation}
    A := \sqrt{H_+^{\frac{1}{2}} H_- H_+^{\frac{1}{2}}}
\end{equation}

\myindent Moreover, as $H_+$ is invertible, we compute

\begin{equation}
    \begin{split}
    \mathcal{L}^{2n} &=(-1)^n \begin{pmatrix}H_+^{-\frac{1}{2}} A^{2n} H_+^{\frac{1}{2}} & 0 \\ 0 & H_+^{\frac{1}{2}} A^{2n} H_+^{-\frac{1}{2}}\end{pmatrix}\\
    \mathcal{L}^{2n+1} &= (-1)^n\begin{pmatrix} 0 & H_+^{-\frac{1}{2}}A^{2n + 2} H_+^{-\frac{1}{2}}\\
    -H_+^{\frac{1}{2}} A^{2n} H_+^{\frac{1}{2}} &0 \end{pmatrix}
    \end{split}
\end{equation}

which yields

\begin{equation}\label{expformula}
    \exp(s\mathcal{L}) = \begin{pmatrix}H_+^{-\frac{1}{2}} \cos(s A)H_+^{\frac{1}{2}} & H_+^{-\frac{1}{2}} A\sin(sA) H_+^{-\frac{1}{2}} \\ 
-H_+^{\frac{1}{2}} s\text{sinc}(sA) H_+^{\frac{1}{2}} & H_+^{\frac{1}{2}} \cos(sA)H_+^{-\frac{1}{2}}\end{pmatrix}
\end{equation}

where we define $\text{sinc}(x) := \frac{\sin(x)}{x} = \sum_{n\geq 0} (-1)^n \frac{x^{2n}}{(2n+1)!}$ 

\myindent Now, this is only formal. However, if we define $\exp(s\mathcal{L})$ by \eqref{expformula}, we see that as $A$ is defined on $H_x^3$ then for all $u_0 \in H_x^3$ there holds as expected that $\exp(s\mathcal{L})u_0$ is the solution to $\partial_s v = \mathcal{L}v$ with initial condition $v(0) = u_0$.
 
\begin{lemma}[Spectrum of $A$]\label{specA}
    Let $\eta > 0$. There exists $\alpha > 0$ such that whenever $0 < \eps < \alpha$, $A$ is a nonnegative operator over $L^2$, self-adjoint, with compact resolvent. Moreover, its spectrum is formed of simple eigenvalues $0 \leq \mu_0 < \mu_1 < ...$ with 
    
    \begin{equation}
        \forall n\geq 0, \qquad |\mu_n - 4n| \leq \eta
    \end{equation}
    
   \myindent Finally, as $H_-$ has a nontrivial kernel, so has $A$. Therefore, $\mu_0 = 0$
\end{lemma}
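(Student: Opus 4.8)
The plan is to establish the analytic claims (self-adjointness, positivity, compactness of the resolvent, and $\mu_0=0$) directly, and to obtain the localisation $|\mu_n-4n|\le\eta$ and simplicity by a perturbative argument carried out on the first order operator $\mathcal L$ rather than on $A$ itself; we may clearly assume $\eta\in(0,1)$. The good definition of $A$ is not an issue: we have seen that $H_+$ is positive, self-adjoint, with compact resolvent and $\lambda_0^+>0$, so $H_+^{\pm1/2}$ make sense, $H_+^{-1/2}$ is bounded on $L^2$, and lemma \eqref{eqnorm+} shows $H_+^{1/2}$ is an isomorphism between $H_x^{r}$ and $H_x^{r-1}$ in the relevant range. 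Thus $B:=H_+^{1/2}H_-H_+^{1/2}$ is the nonnegative self-adjoint operator attached to the closed form $q(u)=\|H_-^{1/2}H_+^{1/2}u\|_{L^2}^2$, with form domain $\{u:H_+^{1/2}u\in H_x^1\}=H_x^2$. Combining $\scal{Hv}{v}=\|v\|_{H_x^1}^2$, the smallness of $\|A_-\|$ (proposition \eqref{infnorm}), lemma \eqref{eqnorm+} and the interpolation inequality \eqref{contnorm2}, one gets $c\|u\|_{H_x^2}^2-C\|u\|_{L^2}^2\le q(u)\le C\|u\|_{H_x^2}^2$; since $H_x^2$ embeds compactly into $L^2$ (lemma \eqref{injcomp}), $B$, hence $A=B^{1/2}$, is nonnegative self-adjoint with compact resolvent. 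Finally $\ker A=\ker B=H_+^{-1/2}(\ker H_-)=\R\,H_+^{-1/2}Q$ is one dimensional, which gives $\mu_0=0$ (a simple eigenvalue).

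For the localisation I would pass to the first order operator $\mathcal L$, the point being that, whereas $A^2-(H-2)^2$ is an \emph{unbounded} operator (so that naively perturbing $(H-2)^2$, whose $n$-th eigenvalue $16n^2$ has resolvent of norm $\sim n^{-1}$ on a circle of radius $\sim n$ around it, would not yield a bound uniform in $n$), $\mathcal L$ is a \emph{bounded} perturbation of a skew-adjoint model:
\[
\mathcal L=\mathcal L_0+\mathcal P,\qquad \mathcal L_0:=\begin{pmatrix}0 & H-2\\ -(H-2) & 0\end{pmatrix},\qquad \mathcal P:=\begin{pmatrix}0 & A_-\\ -A_+ & 0\end{pmatrix},
\]
with $\|\mathcal P\|\le C\eps$ on $L^2\times L^2$ by proposition \eqref{infnorm}. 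Here $\mathcal L_0$ is skew-adjoint on $H_x^2\times H_x^2$ (its adjoint is $-\mathcal L_0$), hence normal, so $\|(\zeta-\mathcal L_0)^{-1}\|=\mathrm{dist}(\zeta,\sigma(\mathcal L_0))^{-1}$, and diagonalising $H-2$ in the basis $(h_n)$ gives $\sigma(\mathcal L_0)=\{0\}\cup\{\pm4ni:n\ge1\}$, with $0$ of multiplicity $2$ and each $\pm4ni$ simple. Taking $\eps$ small enough that $C\eps<\eta$, one has $\|\mathcal P(\zeta-\mathcal L_0)^{-1}\|<1$ on the circle of radius $\eta$ about any $\pm4ni$, so the Riesz projection of $\mathcal L$ over that circle is close to, hence of the same rank as, that of $\mathcal L_0$, namely $1$; the same bound for $\zeta$ at distance $>C\eps$ from $\sigma(\mathcal L_0)$ shows there is no further spectrum. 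As $\mathcal L=\mathcal L_0+\mathcal P$ has compact resolvent, $\sigma(\mathcal L)$ consists, apart from points within $\eta$ of $0$, of exactly one simple eigenvalue within $\eta$ of each $\pm4ni$ (this parallels lemma \eqref{spec}, now applied to $\mathcal L$).

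To transfer this to $A$ I would use the positivity of $H_\pm$. If $\mathcal L\binom uv=\zeta\binom uv$ with $\zeta\ne0$ then $H_+H_-v=-\zeta^2v$, and pairing with $H_-v$ gives $\|H_+^{1/2}H_-v\|_{L^2}^2=-\zeta^2\scal{H_-v}{v}$; the left side is $\ge0$ while $\scal{H_-v}{v}>0$ (it vanishes only if $v\in\ker H_-=\R Q$, which forces $u=v=0$ when $\zeta\ne0$), so $\zeta^2\le0$, i.e. $\zeta\in i\R$. Together with the correspondences $\sigma(\mathcal L)\setminus\{0\}=\{\pm\sqrt{-\nu}:\nu\in\sigma(H_+H_-)\setminus\{0\}\}$, $\sigma(A^2)=\sigma(B)$ and $\sigma(B)\setminus\{0\}=\sigma(H_+H_-)\setminus\{0\}$, this identifies the eigenvalues of $A$ as $0$ and, for $n\ge1$, a simple $\mu_n\ge0$ with $|\mu_n-4n|\le\eta$; since $\eta<1$ they are strictly increasing, which completes the proof.

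\textbf{The main difficulty} is obtaining the localisation $|\mu_n-4n|\le\eta$ uniformly in $n$: squaring turns the perturbation into an unbounded operator and destroys uniformity as $n\to\infty$, so one must perturb $\mathcal L$, for which $\mathcal L-\mathcal L_0$ is a genuinely bounded operator of size $O(\eps)$; the price is the loss of self-adjointness of $\mathcal L$, which is precisely why the extra positivity computation is needed to confine $\sigma(\mathcal L)$ to the imaginary axis and recover real eigenvalues for $A$.
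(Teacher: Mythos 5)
Your proposal is correct and reaches the same conclusions, but it takes a genuinely different route from the paper. The paper works directly with $A^2 = H_+^2 - 2H_+^{1/2}Q^2H_+^{1/2}$: it encircles the expected eigenvalues by disks $D_n$ of radius $\sim n\delta$ about $16n^2$, establishes the key relative-bound estimate $\scal{H_+(Q^2 v)}{Q^2 v} \le C\eps^2\scal{H_+ v}{v}$, and deduces $\|2H_+^{1/2}Q^2 H_+^{1/2}(H_+^2 - \zeta)^{-1}\| \le C\eps/\sqrt{\delta}$ \emph{uniformly in $n$} on the contours $\Gamma_n$; it then compares Riesz projections of $A^2$ and $H_+^2$. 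You instead pass to the first-order $\mathcal L$, for which $\mathcal L-\mathcal L_0$ is an honestly bounded perturbation of size $O(\eps)$, localise $\sigma(\mathcal L)$ by standard Kato projector comparison around the simple eigenvalues $\pm4ni$ of the skew-adjoint $\mathcal L_0$, and transfer back to $A$ through the positivity pairing that confines $\sigma(\mathcal L)$ to $i\R$. Both are sound: the paper buys a one-step self-adjoint comparison at the price of a relative bound; you buy a bounded perturbation at the price of the extra transfer argument and of the non-self-adjointness of $\mathcal L$. I would caution, though, that your motivating claim — that perturbing $(H-2)^2$ ``would not yield a bound uniform in $n$'' — undersells what the paper actually carries out: once the perturbation is recognised as relatively $H_+$-bounded with $O(\eps)$ relative bound, the uniform-in-$n$ estimate on the growing contours $\Gamma_n$ does go through, precisely because $\|H_+(H_+^2-\zeta)^{-1}\|$ stays $O(1/\delta)$ there.

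There is one gap in your transfer step. The Riesz projection of $\mathcal L$ over the circle of radius $\eta$ about $0$ has rank $2$ (matching $\mathcal L_0$), but $\ker\mathcal L=\{0\}\times\R Q$ is only one-dimensional. You have not ruled out a second, \emph{nonzero} eigenvalue $\zeta_0\in i\R$ with $0<|\zeta_0|\le\eta$ inside that circle; such a $\zeta_0$ would produce, via your correspondence, a positive eigenvalue $|\zeta_0|$ of $A$ far below $4$, breaking both the list $0\le\mu_0<\mu_1<\dots$ with $|\mu_n-4n|\le\eta$ and the simplicity of $\mu_0=0$. Two ways to close this. Either note the Jordan chain $\mathcal L\binom{\rho}{0}=\binom{0}{-Q}$, $\mathcal L\binom{0}{Q}=0$ with $\rho=H_+^{-1}Q$, so that $0$ already has algebraic multiplicity exactly $2$ and the rank-$2$ projection leaves no room for further spectrum; or exploit the real structure: the complexified $\mathcal L$ commutes with conjugation, so $\sigma(\mathcal L)$ is conjugation-symmetric with equal algebraic multiplicities, and a nonzero $\zeta_0\in i\R$ near $0$ would be accompanied by $-\zeta_0=\overline{\zeta_0}$, making the total algebraic multiplicity near $0$ at least $1+2=3$, contradicting rank $2$. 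With either fix the proposal is complete.
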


\begin{proof}
    Take $\eta$ as in the statement, and $\delta > 0$ small, to be fixed later. Without loss of generality, we assume $\delta < 1$.
    
    \myindent We study $A^2 = H_+^{\frac{1}{2}} H_- H_+^{\frac{1}{2}} = H_+^2 - 2H_+^{\frac{1}{2}} Q^2 H_+^{\frac{1}{2}}$. First, let us study the resolvent of $A^2$ : we must ask ourselves, given a $\zeta \in \C$, when is $A^2 - \zeta$ invertible. To this effect, we introduce $0 < \lambda_0 < \lambda_1 < ...$ the spectrum of $H_+$, where $\lambda_n = 4n + \eps_n$ and we have that the $L^{\infty}$ norm of $(\eps_n)_n$ has limit zero when $\eps \to 0$, and we set $\phi_0, \phi_1,...$ an orthonormal basis of $L^2$ such that $H_+ \phi_n = \lambda_n \phi_n$. We set $D_0$ the disk of center $0$ and radius $\delta$, $D_1$ the disk of center $16$ and radius $\delta$, then $D_n$ the disk of center $16n^2$ and radius $n\delta$. We set $\Gamma_n$ the boundary circle of $D_n$ positively oriented. We finally set $\Sigma = \bigcup_{n\geq 0} D_n$. 
    
    \myindent Find $\zeta \in \C \backslash \Sigma$. We will show that, provided we take $\eps > 0$ small enough (depending on $\delta$ only), $A^2 - \zeta$ is invertible. First, the nth eigenvalue of $H_+^2$ is $16n^2 + 4n \eps_n + \eps_n^2$. Provided $\eps$ is small enough, we have for all $n\geq 1$ that
    
    \begin{equation}
        |16n^2 - \lambda_n^2|\leq \frac{1}{2} \delta n
    \end{equation}
    
    and for $n = 0$ that $|\lambda_0^2| \leq \frac{1}{2}\delta$. Thus, $H_+^2 - \zeta$ is invertible and we are able to formally write
    
    \begin{equation}\label{resolv}
        (A^2 - \zeta)^{-1} = (H_+^2 - \zeta)^{-1} \left(I - 2H_+^{\frac{1}{2}}Q^2 H_+^{\frac{1}{2}} (H_+^2 - \zeta)^{-1}\right)^{-1}
    \end{equation}
    
    where $I$ is the identity over $L^2$. Our goal is to prove that $2H_+^{\frac{1}{2}}Q^2 H_+^{\frac{1}{2}} (H_+^2 - \zeta)^{-1}$ is a bounded operator over $L^2$ with operator norm strictly smaller than $1$ which will prove the invertibility of $A^2 - \zeta$. To this effect, we first observe that there exists a universal constant $C > 0$ such that given any $v\in H_x^1$ there holds
    
    \begin{equation}
        \scal{H_+(Q^2 v)}{Q^2 v} \leq C\eps^2 \scal{H_+ v}{v}
    \end{equation}
    
    \myindent Indeed, we can write $H_+ = H + B$ where $B$ is a bounded operator over $L^2$, so 
    
    \begin{equation}
        \begin{split}
        \scal{H_+(Q^2 v)}{Q^2 v} &= \scal{H(Q^2 v)}{Q^2 v} + \scal{B(Q^2 v)}{Q^2 v}\\
        &\leq \|\nabla(Q^2 v)\|_{L^2}^2 + \|xQ^2 v\|_{L^2}^2 + \|B\|\|Q^2 v\|_{L^2}^2
        \end{split}
    \end{equation}
    
    \myindent Now, as $\nabla{Q^2 v} = Q^2 \nabla v + 2Q\nabla Q v$,  proposition \eqref{infnorm} enables us to conclude.
    
    \myindent Find $u = \sum_{n\geq 0}\alpha_n \phi_n \in L^2$. There holds
    
    \begin{equation}
        \begin{split}
        \left\|2H_+^{\frac{1}{2}}Q^2 H_+^{\frac{1}{2}} (H_+^2 - \zeta)^{-1} u\right\|_{L^2}^2 &\leq C \eps^2 \|H_+(H_+^2 - \zeta)^{-1} u\|_{L^2}^2\\
        &\leq C\eps^2 \sum_{n\geq 0} \frac{\lambda_n}{|\lambda_n^2 - \zeta|} |\alpha_n|^2
        \end{split}
    \end{equation}
    
    \myindent Given $n \geq 1$, as $\zeta$ is outside $D_n$, $|\lambda_n^2 - \zeta| \geq \frac{1}{2}n\delta$ and thus $\frac{\lambda_n}{|\lambda_n^2 - \zeta|} \leq \frac{4n + \eps_n}{\frac{1}{2} n\delta} \leq \frac{10}{\delta}$. Similarly, we have $\frac{\lambda_0}{|\lambda_0^2 - \zeta|} \leq \frac{10}{\delta}$. Thus, in terms of operator norm over $L^2$,
    
    \begin{equation}\label{specnormcont}
        \left\|2H_+^{\frac{1}{2}}Q^2 H_+^{\frac{1}{2}} (H_+^2 - \zeta)^{-1}\right\| \leq \frac{C'\eps}{\sqrt{\delta}}
    \end{equation}
    
    where $C'$ is a constant. Provided $\eps$ is small enough, the right-hand side is strictly smaller than $1$ and thus $I - 2H_+^{\frac{1}{2}}Q^2 H_+^{\frac{1}{2}} (H_+^2 - \zeta)^{-1}$ is invertible and formula \eqref{resolv} is correct.
    
    \myindent First, this ensures that $A^2$ has compact resolvent (indeed its resolvent is the product of a bounded operator over $L^2$ and of a compact one, thus is it compact). Therefore we already know that $A^2$ has a discreet spectrum and that there is an orthonormal basis of $L^2$ formed of eigenfunctions for $A^2$. Moreover, as we know that $A^2$ has no spectrum outside $\Sigma$, its eigenvalues are located within the $D_n$'s. We now prove that $A^2$ has exactly one simple eigenvalue in $D_n$. Indeed, following Kato \cite[p.178]{kato2013perturbation} we know that the spectral projector over the eigenfunctions corresponding to the eigenvalues located within $D_n$ can be written as 
    
    \begin{equation}
        \pi_n^{\eps} = \frac{1}{2i\pi} \int_{\Gamma_n} (A^2 - \zeta)^{-1} d\zeta
    \end{equation}
    
    \myindent However, using equation \eqref{specnormcont}, we see that 
    
    \begin{equation}
        \left\| \left(I - 2H_+^{\frac{1}{2}}Q^2 H_+^{\frac{1}{2}} (H_+^2 - \zeta)^{-1}\right)^{-1} - I\right\| \leq \frac{C\eps}{\sqrt{\delta}}
    \end{equation}
    
    for a constant $C$ when $\eps$ is small enough, which is an immediate consequence of the formula 
    
    \begin{equation}
        (I- B)^{-1} - I = \sum_{n\geq 1} B^n
    \end{equation}
    
    whenever $\|B\| < 1$. Thus, if we write 
    
    \begin{equation}
        \pi_n := \frac{1}{2i\pi} \int_{\Gamma_n} (H_+^2 - \zeta)^{-1} d\zeta
    \end{equation}
    
    there holds
    
    \begin{equation}
        \begin{split}
        \left\|\pi_n^{\eps} - \pi_n \right\| &= \frac{1}{2\pi} \left\|\int_{\Gamma_n} (H_+^2 - \zeta)^{-1} \left( \left(I - 2H_+^{\frac{1}{2}}Q^2 H_+^{\frac{1}{2}} (H_+^2 - \zeta)^{-1}\right)^{-1} - I\right)d\zeta\right\| \\
        &\leq \frac{C\eps}{\sqrt{\delta}} \frac{1}{2\pi} \int_{\Gamma_n} \left\|(H_+^2 - \zeta)^{-1}\right\| d\zeta\\
        & \leq \frac{C\eps}{\sqrt{\delta}} \frac{1}{2\pi}\int_{\Gamma_n} \frac{1}{\frac{1}{2} n\delta} d\zeta\\
        &\leq C \frac{\eps}{\sqrt{\delta}}
        \end{split}
    \end{equation}
    
    \myindent For a constant $C$ which is independent of $n$ and of $\eps$. This upper bound can be made arbitrarily small when $\eps \to 0$ : should it be smaller than $1$ strictly, this ensures that $\pi_n^{\eps}$ and $\pi_n$ have the same range (see \cite[~Lemma 4.1 p.34]{kato2013perturbation}) which is equal to $1$ as $H_+$ has simple eigenvalues.
    
    \myindent Therefore, provided $\eps > 0$ is small enough, and this depending only of $\delta$, we have proved that $A^2$ has a discreet spectrum formed of simple eigenvalues $0 \leq \mu_0^2 < \mu_1^2 < ....$ with $\mu_n^2 = 16n^2 + \alpha_n$, where $|\alpha_n|\leq \delta max(1,n)$ (indeed, $\mu_n^2 \in D_n$).
    
    \myindent Thus, $A$ has the discreet spectrum formed of simple eigenvalues $0 \leq \mu_0 <\mu_1 < ...$ and we see that $|\mu_0|\leq \sqrt{\delta}$, and moreover that for $n\geq 1$
    
    \begin{equation}
        \begin{split}
        |\mu_n - 4n| &= \left|\sqrt{16 n^2 + \alpha_n} - 4n\right|\\
        &= 4n \left|\sqrt{1 + \frac{\alpha_n}{16n^2}} - 1\right|\\
        &\leq 4n C \frac{\alpha_n}{16n^2}\\
        &\leq C \delta
        \end{split}
    \end{equation}
    
    where $C$ is a constant. Thus, provided we initially choose 
    
    \begin{equation}
        \max(\sqrt{\delta}, C\delta) < \eta
    \end{equation}
    
    we have proved the statement.
\end{proof}

\myindent We fix until the end $\psi_0, \psi_1,...$ an $L^2$ orthonormal basis of eigenfunctions for $A$.

\section{Backward Integration}

\myindent In order to prove proposition \eqref{princprop}, we will use a $\textit{backward integration}$ scheme. The idea is to use an advanced bootstrap argument as follows. Using the a priori bounds on $V$, we see that the equation \eqref{eqw} is locally well-posed in $H_x^3$, thus we have existence and uniqueness of local solutions for a given initial condition. Thus, given $0 < s_0 < M$, we may set $w^M$ the unique solution to \eqref{eqw} defined close to $M$ such that $w^M(M) = 0$. We will prove that for a choice of $s_0, B > 0$ big enough, $w^M$ is defined over $(s_0,M]$ for all $M > s_0$ and there holds

\begin{equation}
    \|w^M(s)\|_{H_x^3} \leq \frac{B}{s \log s} \qquad s_0 < s < M
\end{equation}

\myindent Moreover, we will prove that $(w^M)$ is a Cauchy sequence in $\mathcal{C}((s_0,T],H_x^3)$ for all $T > s_0$. If $w$ is its limit, it solves the integral form of equation \eqref{eqw}. Thus, it is a strong solution in $\mathcal{C}^1((s_0,+\infty),H_x^3)$ and the bound \eqref{boundw} holds.

\subsection{\textit{Another change of function}}
\myindent Extracting the solution to the linearized equation, set

\begin{equation}
    f^M := w^M + \int_s^M \exp((s-\sigma)\mathcal{L}) IR(\sigma)d\sigma := w^M + r^M
\end{equation}

\myindent $f^M$ is a solution to

\begin{equation}\label{eqf}
    \partial_s f^M = \mathcal{L}f^M + IK(s)f^M - IK(s)r^M(s) + IN(s)
\end{equation}

where we set $K(s) := \beta(s)(|y|^2 - \alpha Q(y))$. In order to estimate the $H_x^3$ norm of $w^M$, we need to estimate $f^M$ and $r^M$.

\subsection{\textit{Estimate of $r^M$}}

\myindent The estimate of $\|r^M(s)\|_{H_x^3}$ is a consequence of the \textit{oscillatory nature} of $r^M$. Indeed, as we will see, one can explicitly write $r^M(s)$ as a sum of integrals of the form

\begin{equation}
    \int_s^M \text{oscillatory function}\times \frac{d\sigma}{\sigma \log(\sigma)}
\end{equation}

Using somehow fastidious integration by parts, such integrals can always be bounded by $\frac{C}{s\log s}$.

\myindent Now, it will appear in the proof that the eigenvalue $\mu_1$ of $A$ may cause cancellation in the oscillations, thus forbidding to conclude the estimates on $r^M$. To prevent that, we here choose the value of $\alpha$ : we set 

\begin{equation}
    \alpha := \frac{\scal{|y|^2Q}{H_+^{-\frac{1}{2}}\psi_1}}{\scal{Q^2}{H_+^{-\frac{1}{2}}\psi_1}}
\end{equation}

such that $R(s) = \beta(s)(|y|^2 Q - \alpha Q^2)$ is orthogonal to $H_+^{-\frac{1}{2}} \psi_1$, or to say it differently that $H_+^{-\frac{1}{2}} R(s)$ is orthogonal to $\psi_1$. We need to justify that $\alpha$ is well defined, that is we need to prove that $\scal{Q^2}{H_+^{-\frac{1}{2}}\psi_1} \neq 0$ provided $\eps > 0$ is small enough. We compute to this effect that $H_+^{-\frac{1}{2}}\psi_1 \to \frac{1}{4} h_1$ in $L^2$ when $\eps \to 0$ (this follows from the computations of the proof of lemma \eqref{specA}). Moreover using \eqref{linapprox} we see that $Q^2 \sim C\eps h_0^2$ when $\eps\to 0$ in $L^2$ with $C > 0$ Thus, it is enough to know that the scalar product $\scal{h_0^2}{h_1}$ is nonzero, which is proved in \cite{faou2020weakly}.

\myindent We prove in this subsection the following lemma

\begin{lemma}\label{normr}
    There is a constant $B_0$ independent of $s$ and $M$ such that 
    
    \begin{equation}
        \|r^M(s)\|_{H_x^3} \leq \frac{B_0}{s\log s}
    \end{equation}
    
\end{lemma}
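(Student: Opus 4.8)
Everything rests on combining the explicit formula \eqref{expformula} for $\exp(s\mathcal{L})$ with the choice of $\alpha$ fixed above. Since $R(\sigma)=\beta(\sigma)R_0$, where $R_0$ is the fixed vector with first component $g:=|y|^2Q-\alpha Q^2$ and second component $0$, the vector $IR(\sigma)$ equals $\beta(\sigma)$ times the vector with first component $0$ and second component $-g$. Feeding this into \eqref{expformula} with time $s-\sigma$ gives
\begin{equation}
r^M(s)=-\int_s^M\beta(\sigma)\begin{pmatrix}H_+^{-\frac12}A\sin\big((s-\sigma)A\big)H_+^{-\frac12}g\\ H_+^{\frac12}\cos\big((s-\sigma)A\big)H_+^{-\frac12}g\end{pmatrix}d\sigma .
\end{equation}
I would then expand $H_+^{-\frac12}g=\sum_{n\geq0}c_n\psi_n$ in the eigenbasis of $A$ from Lemma \eqref{specA}, where $c_n=\scal{g}{H_+^{-\frac12}\psi_n}$; the entire point of the definition of $\alpha$ is that it forces $c_1=0$. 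Using $\beta(\sigma)=\dfrac{e^{-4i\sigma}-e^{4i\sigma}}{2i\,\sigma\log\sigma}$ and writing $\sin\big((s-\sigma)\mu_n\big)$ and $\cos\big((s-\sigma)\mu_n\big)$ in exponential form, each of the two components of $r^M(s)$ becomes a series over $n\neq1$ of terms of the shape $c_n\cdot(\text{fixed operator applied to }\psi_n)$ times a scalar oscillatory integral $\int_s^M\frac{e^{i\omega\sigma}}{\sigma\log\sigma}\,d\sigma$ with $\omega\in\{\pm(4+\mu_n),\ \pm(4-\mu_n)\}$.

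\emph{The scalar integrals.} Because $\mu_0=0$, $|\mu_n-4n|\leq\eta$ and the index $n=1$ has been excluded, each such frequency satisfies $|\omega|\geq4-\eta$; the only frequency that could have degenerated to $0$ is $\pm(4-\mu_1)$, which has been killed by $c_1=0$. For $|\omega|\geq4-\eta$, a single integration by parts gives
\begin{equation}
\int_s^M\frac{e^{i\omega\sigma}}{\sigma\log\sigma}\,d\sigma=\left[\frac{e^{i\omega\sigma}}{i\omega\,\sigma\log\sigma}\right]_s^M-\frac{1}{i\omega}\int_s^M e^{i\omega\sigma}\,\frac{d}{d\sigma}\!\left(\frac{1}{\sigma\log\sigma}\right)d\sigma ,
\end{equation}
and since $\big|\frac{d}{d\sigma}(\sigma\log\sigma)^{-1}\big|\leq 2(\sigma^2\log\sigma)^{-1}$ while $\sigma\mapsto\sigma\log\sigma$ is increasing (enlarge $s_0$ if needed), the boundary terms and the remaining integral are all bounded by $\frac{C}{|\omega|\,s\log s}\leq\frac{C}{s\log s}$, uniformly in $M\geq s$. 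This is precisely the $\int_s^M(\text{oscillatory})\times\frac{d\sigma}{\sigma\log\sigma}$ mechanism described before the statement.

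\emph{Reassembling the series in $H_x^3$.} Two points remain. First, $g=|y|^2Q-\alpha Q^2$ belongs to every $H_x^k$ (by Lemma \eqref{xcarre}, Lemma \eqref{algebra} and Proposition \eqref{exunic}), hence so does $H_+^{-\frac12}g$; since $A^2=H_+^2-2H_+^{\frac12}Q^2H_+^{\frac12}$ dominates $H^{2k}$ up to lower-order terms, this gives $\sum_n\mu_n^{2k}|c_n|^2<\infty$ for every $k$, so $|c_n|$ decays faster than any power of $n$. Second, using again $A^2=H_+^2-2H_+^{\frac12}Q^2H_+^{\frac12}$ together with the estimate $\scal{H_+(Q^2v)}{Q^2v}\leq C\eps^2\scal{H_+v}{v}$ obtained inside the proof of Lemma \eqref{specA}, one shows $\|H_+^{\,j}\psi_n\|_{L^2}\leq C_j(1+\mu_n)^{j}$, so that — through the norm equivalence $\|u\|_{H_x^k}^2\approx\scal{H_+^ku}{u}$ of Lemma \eqref{eqnorm+} — the quantities $\|H_+^{-\frac12}\psi_n\|_{H_x^3}$ and $\|H_+^{\frac12}\psi_n\|_{H_x^3}$ grow only polynomially in $n$. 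The super-polynomial decay of $c_n$ then overwhelms this polynomial growth, so the two series defining $r^M(s)$ converge absolutely in $H_x^3$, and summing the per-term estimates of the previous step yields $\|r^M(s)\|_{H_x^3}\leq\frac{B_0}{s\log s}$ with $B_0$ independent of $s$ and $M$.

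\emph{The main obstacle.} The only genuine difficulty is the exact resonance between the temporal frequency $4$ carried by $\beta$ and the eigenvalue $\mu_1\simeq4$ of $A$: keeping the $\psi_1$-component would leave a non-oscillatory integral $\int_s^M\frac{d\sigma}{\sigma\log\sigma}\sim\log\log s$, with no decay whatsoever. Neutralising this is exactly what the choice of $\alpha$ achieves. Everything else is the (lengthy but mechanical) integration-by-parts and summation bookkeeping sketched above, the two points requiring a bit of care being the uniformity of all constants in $M$ and the convergence of the spectral series in the $H_x^3$ norm rather than merely in $L^2$.
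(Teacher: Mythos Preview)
Your proposal is correct and follows essentially the same approach as the paper's proof: the same explicit formula for $\exp((s-\sigma)\mathcal{L})IR(\sigma)$, the same spectral expansion in the $A$-eigenbasis with the $n=1$ mode killed by the choice of $\alpha$, and the same integration-by-parts bound $\frac{C}{|\omega|\,s\log s}$ on the resulting scalar oscillatory integrals. The only cosmetic difference is in the reassembly step: the paper sums the series directly via the equivalence $\|u\|_{H_+^2}^2\leq C\bigl(\scal{A^2u}{u}+\|u\|_{L^2}^2\bigr)$ and needs only $\phi=H_+^{-1/2}g\in H_x^4$, whereas you invoke the (stronger but equally available) super-polynomial decay of the coefficients $c_n$ against polynomial growth of $\|H_+^{\pm1/2}\psi_n\|_{H_x^3}$ --- both routes close the estimate.
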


\myindent We start by writing

\begin{equation}
    \begin{split}
    r^M(s) &= -\int_s^M \frac{\sin(4\sigma)}{\sigma\log \sigma} \exp((s-\sigma)\mathcal{L}) \begin{pmatrix}0\\ -(|y|^2 Q - \alpha Q^2)\end{pmatrix} d\sigma\\
    &= \begin{pmatrix} H_+^{-\frac{1}{2}} A \int_s^M \frac{\sin(4\sigma)}{\sigma \log \sigma } \sin((s-\sigma) A) H_+^{-\frac{1}{2}}(|y|^2Q - \alpha Q^2) d\sigma)\\ H_+^{\frac{1}{2}} \int_s^M \frac{\sin(4\sigma)}{\sigma\log \sigma} \cos((s-\sigma)A) H_+^{-\frac{1}{2}}(|y|^2 Q - \alpha Q^2) d\sigma \end{pmatrix}
    \end{split}
\end{equation}

\myindent However 

\begin{multline}
    \sin(4\sigma)\sin((s-\sigma)A) H_+^{-\frac{1}{2}}(|y|^2 Q - \alpha Q^2) =\\ \frac{1}{2}\left(\cos(sA - \sigma(A + 4)) - \cos(sA - \sigma(A - 4))\right)H_+^{-\frac{1}{2}}(|y|^2 Q - \alpha Q^2)
\end{multline}

\myindent And we have that 

\begin{equation}
    \cos(sA - \sigma(A-4)) H_+^{-\frac{1}{2}}(|y|^2 Q - \alpha Q^2) = \sum_{n\neq 1}\cos(s\mu_n - \sigma(\mu_n - 4))\scal{H_+^{-\frac{1}{2}}(|y|^2 Q - \alpha Q^2)}{\psi_n} \psi_n
\end{equation}

where we are able to exclude $n=1$ because of the definition of $\alpha$. Here, we insist on the need to exclude $\alpha = 1$ : it could be that $\mu_1 = 4$ and in that case there would be a term in the sum which doesn't oscillate with respect to $\sigma$.

\myindent Now, if we denote $\phi(y) := H_+^{-\frac{1}{2}} (|y|^2 Q - \alpha Q^2)$ there holds 

\begin{equation}
    \int_s^M \frac{\cos(sA - \sigma(A-4))}{\sigma \log\sigma} \phi(y)d\sigma = \sum_{n\neq 1}\left(\int_s^M \frac{\cos(s\mu_n - \sigma(\mu_n - 4))}{\sigma \log \sigma} d\sigma\right) \scal{\phi(y)}{\psi_n}\psi_n
\end{equation}

\myindent However

\begin{multline}
    \int_s^M \frac{\cos(s\mu_n - \sigma(\mu_n - 4))}{\sigma \log \sigma } d\sigma = \left[-\frac{\sin(s\mu_n - \sigma(\mu_n - 4))}{(\mu_n - 4)\sigma \log \sigma }\right]_s^M\\ - \int_s^M \frac{\sin(s\mu_n - \sigma(\mu_n - 4))}{\mu_n - 4} \frac{\log \sigma + 1}{\sigma^2 (\log \sigma)^2} d\sigma
\end{multline}

\myindent As $n\neq 1$ we see that $|\mu_n - 4|\geq c > 0$ with $c$ a constant thanks to \eqref{specA}. Thus, we have the bound

\begin{equation}
    \left|\int_s^M \frac{\cos(s\mu_n - \sigma(\mu_n - 4))}{\sigma \log \sigma } d\sigma\right| \leq \frac{B_0}{s\log s}
\end{equation}

where $B_0$ does not depend on $M$ nor on $n$.

\myindent The same kind of reasoning gives

\begin{equation}
    \int_s^M \frac{\cos(sA - \sigma(A+4))}{\sigma \log\sigma} \phi(y)d\sigma = \sum_{n\neq 1}\left(\int_s^M \frac{\cos(s\mu_n - \sigma(\mu_n + 4))}{\sigma \log \sigma } d\sigma\right) \scal{\phi(y)}{\psi_n}\psi_n
\end{equation}

where 

\begin{equation}
    \left|\int_s^M \frac{\cos(s\mu_n - \sigma(\mu_n + 4))}{\sigma \log \sigma } d\sigma\right| \leq \frac{B_0}{s\log s}
\end{equation}

\myindent Let us now see that writing 

\begin{equation}
    r_1^M(s) = H_+^{-\frac{1}{2}} A \sum_{n\neq1} \alpha_n(s) \scal{\phi(y)}{\psi_n}\psi_n
\end{equation} 

with $|\alpha_n(s)|\leq \frac{B_0}{s\log s}$ yields

\begin{equation}
    \begin{split}
    \|r_1^M(s)\|_{H_x^3}^2 &\leq C\|r_1^M(s)\|_{H_+^3}^2\\
    &= C\left\|H_+^{-\frac{1}{2}} A \sum_{n\neq 1} \alpha_n(s) \scal{\phi(y)}{\psi_n} \psi_n\right\|_{H_+^3}^2\\
    &= C\left\|A\sum_{n\neq 1}\alpha_n(s)\scal{\phi(y)}{\psi_n} \psi_n\right\|_{H_+^2}^2
    \end{split}
\end{equation}

\myindent Now, given a $u\in H_x^2$, there holds

\begin{equation}
    \begin{split}
    \|u\|_{H_+^2}^2 &= \scal{H_+^2 u}{u}\\
    &= \scal{A^2 u}{u} + 2\scal{H_+^{\frac{1}{2}} Q^2 H_+^{\frac{1}{2}} u}{u}\\
    &\leq \scal{A^2u}{u} + \eps \|u\|_{H_+^1}^2\\
    &\leq \scal{A^2 u}{u} + \frac{1}{2} \|u\|_{H_+^2}^2 + C\|u\|_{L^2}^2
    \end{split}
\end{equation}

where $C$ is a constant independent of $u$. Thus, we see that 

\begin{equation}
    \|u\|_{H_+^2}^2 \leq C\left(\scal{A^2 u}{u} + \|u\|_{L^2}^2\right)
\end{equation}

from which follows

\begin{equation}
    \begin{split}
    \|r_1^M(s)\|_{H_x^3}^2 &\leq C\scal{A^2 A \sum_{n\neq 1}\alpha_n(s) \scal{\phi(y)}{\psi_n} \psi_n}{A\sum_{n\neq 1} \alpha_n(s) \scal{\phi(y)}{\psi_n} \psi_n} + C\left\|A\sum_{n\neq 1} \alpha_n(s) \scal{\phi(y)}{\psi_n} \psi_n\right\|_{L^2}^2\\
    &= \sum_{n\neq 1} (\mu_n^4 + \mu_n^2) \alpha_n(s)^2 \left|\scal{\phi(y)}{\psi_n} \right|^2\\
    &\leq \frac{B_0^2}{s^2 (\log s)^2} \sum_{n\neq 1} (\mu_n^4 + \mu_n^2) \left|\scal{\phi(y)}{\psi_n}\right|^2\\
    &\leq \frac{B_0^2}{s^2 (\log s)^2} C \|\phi(y)\|_{H_x^4}^2
    \end{split}
\end{equation}

wich we can write as

\begin{equation}
    \|r_1^M(s)\|_{H_x^3} \leq \frac{B_0}{s \log s}
\end{equation}

with $B_0$ another constant without importance.

\myindent Now, we similarly show that 

\begin{equation}
    r_2^M(s) = H_+^{\frac{1}{2}} \sum_{n\neq1} \beta_n(s) \scal{\phi(y)}{\psi_n}\psi_n
\end{equation} 

with $|\beta_n(s)|\leq \frac{B_0}{s\log s}$ uniformly in $n$. Thus, we are able to write 

\begin{equation}
    \begin{split}
    \|r_2^M(s)\|_{H_x^3}^2 &\leq C\|r_2^M(s)\|_{H_+^3}^2 \\
    &= \left\|\sum_{n\neq 1} \beta_n(s) \scal{\phi(y)}{\psi_n}\psi_n \right\|_{H_+^4}^2
    \end{split}
\end{equation}

\myindent A few computations show that 

\begin{equation}
    \|u\|_{H_+^4}^2 \leq C\left(\scal{A^4 u}{u} + \|u\|_{L^2}^2\right)
\end{equation}

\myindent Thus, there holds 

\begin{multline}
    \|r_2^M(s)\|_{H_x^3}^2 \leq C\scal{A^4 \sum_{n\neq 1} \beta_n(s) \scal{\phi(y)}{\psi_n}\psi_n}{\sum_{n\neq 1}\beta_n(s) \scal{\phi(y)}{\psi_n}\psi_n}\\ + C\left\|\sum_{n\neq 1}\beta_n(s) \scal{\phi(y)}{\psi_n}\psi_n\right\|_{L^2}^2
\end{multline}

and we can conclude in the same way that 

\begin{equation}
    \|r_2^M(s)\|_{H_x^3} \leq \frac{B_0}{s\log s}
\end{equation}

\subsection{\textit{An estimate}}

\myindent We will need the following technical result

\begin{lemma}\label{intr}
    There is a constant $B_0$ independent of $M$ such that
    
    \begin{equation}
        \left|\int_s^M \scal{r_1^M(\sigma)}{Q}\right| \leq \frac{B_0}{s\log s}
    \end{equation}
\end{lemma}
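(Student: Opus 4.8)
The plan is to exploit the explicit expression for $r_1^M(\sigma)$ obtained just above, namely
\[
r_1^M(\sigma) = H_+^{-\frac12} A \sum_{n\neq 1} \alpha_n(\sigma)\scal{\phi(y)}{\psi_n}\psi_n, \qquad \phi = H_+^{-\frac12}(|y|^2 Q - \alpha Q^2),
\]
where $\alpha_n(\sigma) = \int_\sigma^M \frac{\cos(s\mu_n - u(\mu_n\pm 4))}{u\log u}\,du$ (with the two sign choices combined, and an overall factor $\tfrac12$). First I would pair this with $Q$, using that $A = \sqrt{H_+^{1/2}H_-H_+^{1/2}}$, $H_-Q = 0$, hence $A(H_+^{1/2}Q)=0$, and self-adjointness: $\scal{r_1^M(\sigma)}{Q} = \sum_{n\neq 1}\alpha_n(\sigma)\scal{\phi}{\psi_n}\scal{\psi_n}{A H_+^{-1/2}Q} = \sum_{n\neq 1}\mu_n\,\alpha_n(\sigma)\scal{\phi}{\psi_n}\scal{H_+^{-1/2}Q}{\psi_n}$. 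Denote $c_n := \mu_n \scal{\phi}{\psi_n}\scal{H_+^{-1/2}Q}{\psi_n}$; since $\phi, H_+^{-1/2}Q \in H_x^k$ for all $k$ and $\mu_n \sim 4n$, the sequence $(c_n)$ is rapidly decreasing, in particular $\sum_{n}|c_n| < \infty$ with a bound independent of $M$. So $\scal{r_1^M(\sigma)}{Q} = \sum_{n\neq 1} c_n\,\alpha_n(\sigma)$, a uniformly (in $\sigma, M$) absolutely convergent sum of oscillatory integrals.

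The key point is then to integrate the quantity $\scal{r_1^M(\sigma)}{Q}$ over $\sigma\in(s,M)$ and estimate the resulting \emph{double} oscillatory integral. For each fixed $n$ we have a term of the form
\[
\int_s^M c_n \left(\int_\sigma^M \frac{\cos(s\mu_n - u(\mu_n\mp 4))}{u\log u}\,du\right)d\sigma .
\]
I would handle it by Fubini: swap the order of integration to get $\int_s^M \frac{\cos(s\mu_n - u(\mu_n\mp 4))}{u\log u}\left(\int_s^u d\sigma\right)du = \int_s^M \frac{(u-s)\cos(s\mu_n - u(\mu_n\mp 4))}{u\log u}\,du$. The extra factor $u-s$ is harmless because after one integration by parts in $u$ (integrating the oscillatory cosine, which contributes a factor $1/|\mu_n\mp 4| \leq 1/c$ since $n\neq 1$, differentiating $\frac{u-s}{u\log u}$) the boundary terms and the remaining integral are each controlled: $\frac{u-s}{u\log u}$ is bounded by $\frac{1}{\log u}\le\frac1{\log s}$ and its derivative is $O\!\big(\frac{1}{u\log u}\big)$, whose integral over $(s,M)$ is $O\!\big(\frac{\log\log M}{\cdot}\big)$ — slightly too weak. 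So instead I would integrate by parts once more, or alternatively avoid Fubini and integrate $\alpha_n(\sigma)$ directly in $\sigma$ by parts, using that $\partial_\sigma\alpha_n(\sigma) = -\frac{\cos(s\mu_n-\sigma(\mu_n\mp4))}{\sigma\log\sigma}$ and that $\alpha_n$ itself is already $O\!\big(\frac{1}{\sigma\log\sigma}\big)$ uniformly (this was established in the previous subsection). That gives $\int_s^M \alpha_n(\sigma)d\sigma$ with $|\alpha_n(\sigma)|\le \frac{B_0}{\sigma\log\sigma}$, but $\int_s^M\frac{d\sigma}{\sigma\log\sigma} = \log\log M - \log\log s$ diverges — so a pointwise bound on $\alpha_n$ is genuinely insufficient and one must use the oscillation \emph{in $\sigma$} of $\alpha_n$.

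I expect the main obstacle to be precisely this: a naive bound $|\scal{r_1^M(\sigma)}{Q}|\le \frac{C}{\sigma\log\sigma}$ integrates to something like $\log\log M$, not $\frac{1}{s\log s}$, so the estimate must come from integrating by parts in $\sigma$ \emph{before} taking absolute values, exploiting that $\cos(s\mu_n - \sigma(\mu_n\mp 4))$ oscillates in $\sigma$ with frequency $|\mu_n\mp 4|\ge c>0$ (here again $n\neq 1$ is essential, to keep the frequency bounded below). Concretely: write $\int_s^M\alpha_n(\sigma)\,d\sigma$, integrate by parts with $\alpha_n$ as the term to differentiate and $1\,d\sigma$ as the term to integrate, choosing the primitive of $1$ as $\sigma$; $\partial_\sigma\alpha_n$ is $-\frac{\cos(\dots)}{\sigma\log\sigma}$ which when multiplied by $\sigma$ gives $-\frac{\cos(\dots)}{\log\sigma}$, and one integration by parts of \emph{that} against the oscillation gains a factor $1/(\mu_n\mp4)$ and leaves $\frac{1}{\log\sigma}$ evaluated at the endpoints plus $\int\frac{d\sigma}{\sigma(\log\sigma)^2}$ which converges. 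The cleanest route is to first perform Fubini to reach $\int_s^M\frac{(u-s)\cos(s\mu_n-u(\mu_n\mp4))}{u\log u}du$ and then integrate by parts twice in $u$: the first IBP produces a boundary term bounded by $\frac{1}{|\mu_n\mp4|}\cdot\frac{u-s}{u\log u}\big|_s^M \le \frac{B_0}{s\log s}$ (the lower endpoint vanishes, the upper is $\le \frac1{\log M}\le\frac1{\log s}$, and in fact one sees the relevant bound is $\frac1{s\log s}$ after comparing; if not, a second IBP on the remaining integral $\int_s^M \partial_u\!\big(\frac{u-s}{u\log u}\big)\frac{\sin(\dots)}{\mu_n\mp4}du$ whose integrand is $O\big(\frac1{u\log u}\big)$ and which is itself oscillatory, giving after a further gain of $1/(\mu_n\mp4)$ a convergent tail $\int\frac{du}{u^2\log u}$ bounded by $\frac{B_0}{s\log s}$). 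Summing over $n\neq 1$ against the rapidly decreasing $(c_n)$ and recombining the two sign contributions then yields the claimed bound with $B_0$ independent of $M$ and $s$.
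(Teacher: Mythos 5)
There is a consequential slip at the Fubini step. In $r_1^M(\sigma)$ the oscillation comes from $\sin\bigl((\sigma-\tau)A\bigr)$, so after the product-to-sum identity the phase is $\sigma\mu_n - \tau(\mu_n\mp 4)$, and it depends genuinely on the \emph{outer} variable $\sigma$. You freeze $\sigma$ at the endpoint $s$ and write $\cos(s\mu_n - u(\mu_n\pm 4))$, so your inner integrand becomes constant in $\sigma$; this is precisely what makes Fubini produce the weight $(u-s)$, and it is the source of the $\log\log M$ trouble you then try to circumvent. With the correct phase the $\sigma$-integral after Fubini is itself oscillatory:
\begin{equation}
\int_s^u \cos\bigl(\sigma\mu_n - u(\mu_n - 4)\bigr)\,d\sigma
 = \frac{\sin(4u) - \sin\bigl(s\mu_n - u(\mu_n - 4)\bigr)}{\mu_n},
\end{equation}
and analogously with $\mu_n+4$. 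So the inner integration gives a \emph{bounded} quantity plus an extra factor $1/\mu_n$ for free, rather than the unbounded weight $(u-s)$. What remains is a pair of single oscillatory integrals $\int_s^M \frac{\sin(\cdot)}{u\log u}\,du$, one at frequency $4$ and one at frequency $\mu_n - 4$ (bounded away from zero because $n\neq 1$), each bounded by $C/(s\log s)$ after one integration by parts; the factor $1/\mu_n$ also supplies summability over $n$. With this correction your Fubini route closes and is, if anything, cleaner than the paper's argument, which skips Fubini and instead performs two integrations by parts in $\tau$ inside the inner integral, followed by further integration by parts in $\sigma$ on the surviving boundary terms.

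You also had the decisive cancellation in hand but misstated it. You wrote $A(H_+^{1/2}Q)=0$; the correct identity is $A(H_+^{-1/2}Q)=0$, since $A^2 H_+^{-1/2}Q = H_+^{1/2}H_- H_+^{1/2}H_+^{-1/2}Q = H_+^{1/2}H_-Q = 0$ and $A\geq 0$. But the vector against which $r_1^M(\sigma)$ is paired, once you move $H_+^{-1/2}$ and $A$ over to the right factor, is exactly $A H_+^{-1/2}Q$, which therefore vanishes; equivalently, $H_+^{-1/2}Q$ is proportional to $\psi_0$ and $\mu_0=0$, so every coefficient $\mu_n\scal{H_+^{-1/2}Q}{\psi_n}$ in your own formula is zero. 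Thus $\scal{r_1^M(\sigma)}{Q}\equiv 0$ and the lemma holds trivially with $B_0=0$; neither the oscillatory-integral machinery you set up nor the paper's is actually necessary here. Following your own spectral observation to its conclusion would have finished the proof in one line.
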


\begin{proof} 
This lemma is essentially a consequence of the oscillatory nature of $r_1^M$ on the one hand, and on the other hand of the particular algebraic fact that $\scal{H_+^{\frac{1}{2}}r_1^M}{\psi_i} = 0$ for $i = 0,1$. Indeed, we set 

\begin{equation}
    H_+^{-\frac{1}{2}}(|y|^2Q - \alpha Q^2) = \sum_{n\neq 1} \alpha_n \psi_n
\end{equation}

thus, 

\begin{equation}
    \begin{split}
    r_1^M(s) &= H_+^{-\frac{1}{2}} A \int_s^M \frac{\sin(4\sigma)}{\sigma \log \sigma} \sin((s-\sigma)A) H_+^{-\frac{1}{2}} (|y|^2Q - \alpha Q^2)d\sigma \\
    &=\frac{1}{2} H_+^{-\frac{1}{2}} A \sum_{n\geq 2} \alpha_n\left(\int_s^M \frac{\cos(s\mu_n - \sigma(\mu_n+4)) - \cos(s\mu_n - \sigma(\mu_n - 4))}{\sigma \log \sigma} d\sigma \right) \psi_n
    \end{split}
\end{equation}

where we can remove both $n =0$ and $n =1$ because there is a factor $A$ before the sum. Now,

\begin{multline}
    \int_s^M \scal{r_1^M(\sigma)}{Q} = \\ \scal{\sum_{n\geq 2} \left(\int_s^M \left(\int_{\sigma}^M \frac{\cos(\sigma\mu_n - \tau(\mu_n+4)) - \cos(\sigma\mu_n - \tau(\mu_n - 4))}{\tau \log \tau} d\tau\right)d\sigma \right)\alpha_n \psi_n}{AH_+^{-\frac{1}{2}} Q}
\end{multline}

\myindent We then compute

\begin{multline}
    \int_{\sigma}^M \frac{\cos(\sigma\mu_n - \tau(\mu_n-4))}{\tau \log \tau} d\tau = \left[-\frac{\sin(\sigma\mu_n - \tau(\mu_n-4))}{(\mu_n - 4) \tau\log \tau}\right]_{\sigma}^M \\
    - \int_{\sigma}^M \frac{\sin(\sigma\mu_n - \tau(\mu_n-4))}{(\mu_n - 4)} \frac{\log\tau + 1}{\tau^2 (\log \tau)^2} d\tau
\end{multline}

\myindent Thus

\begin{multline}
    \int_{\sigma}^M \frac{\cos(\sigma\mu_n - \tau(\mu_n-4))}{\tau \log \tau} d\tau = -\frac{\sin(\sigma\mu_n - M(\mu_n - 4))}{(\mu_n - 4)M\log M} + \frac{\sin(4\sigma)}{(\mu_n-4) \sigma\log \sigma}
    \\
    - \int_{\sigma}^M \frac{\sin(\sigma\mu_n - \tau(\mu_n-4))}{(\mu_n - 4)} \frac{\log\tau + 1}{\tau^2 (\log \tau)^2} d\tau
\end{multline}

\myindent First, there holds

\begin{equation}
    \begin{split}
    \int_s^M \frac{\sin(\sigma\mu_n - M(\mu_n - 4))}{(\mu_n - 4)M\log M}d\sigma &= -\frac{\cos(4M)}{\mu_n(\mu_n-4) M\log M} + \frac{\cos(s\mu_n - M(\mu_n-4))}{\mu_n(\mu_n-4) M\log M}\\
    \int_s^M \frac{\sin(4\sigma)}{(\mu_n-4) \sigma\log \sigma} d\sigma &= \left[-\frac{\cos(4\sigma)}{4(\mu_n-4)\sigma\log \sigma}\right]_s^M - \int_s^M \frac{\cos(4\sigma)}{4(\mu_n - 4)} \frac{1 + \log \sigma}{\sigma^2(\log\sigma)^2} d\sigma
    \end{split}
\end{equation}

with all the right-hand terms bounded by $\frac{B_0}{s\log s}$ (because $n\geq 2$)

\myindent Moreover,

\begin{multline}
    \int_{\sigma}^M \frac{\sin(\sigma\mu_n - \tau\mu_n-4))}{(\mu_n - 4)} \frac{\log\tau + 1}{\tau^2 (\log \tau)^2} d\tau = \left[-\frac{\cos(\sigma\mu_n - \tau(\mu_n - 4))}{(\mu_n - 4)^2}\frac{\log \tau +  1}{\tau^2(\log \tau)^2}\right]_{\sigma}^M\\
    - \int_{\sigma}^M \frac{\cos(\sigma\mu_n - \tau(\mu_n - 4))}{(\mu_n - 4)^2}\left(\frac{1}{\tau^3(\log\tau)^2} - (1+\log\tau)\frac{2\tau(\log\tau)^2 + 2\tau\log \tau }{\tau^4 (\log \tau)^4}\right)d\tau
\end{multline}

the bracket and the integral are both bounded by $\frac{B_0}{s^2 \log s}$. Using now that 

\begin{equation}
    \int_s^M \frac{d\sigma}{\sigma^2 \log \sigma} \leq \frac{C}{s \log s}
\end{equation}

with $C$ independent of $M$, we are able to conclude.

\myindent The computations are exactly the same for the other term $\int_s^M \frac{\cos(s\mu_n - \sigma(\mu_n+4))}{\sigma \log \sigma} d\sigma$
\end{proof}

\subsection{\textit{Estimate of $f^M$}}

\myindent We now want to bound the $H_x^3$ norm of $f^M$. Contrary to the estimate of $r^M$ we don't have a priori oscillatory behaviour or explicit form. Thus, we will use a \textit{bootstrap argument}. \eqref{energy} tells us that we need only bound $\mathcal{E}(f^M)$, $E(f^M)$, and $\scal{f_2^M}{\rho}^2$. We thus set given $B> 0$

\begin{equation}
    T_{[s_0, M]}(B) := \inf\left\{s\in [s_0, M], \forall s < \sigma < M, \mathcal{E}(f^M(\sigma)) + E(f^M(\sigma)) + \scal{f_2^M(\sigma)}{\rho}^2 \leq \frac{B^2}{\sigma^2 (\log \sigma)^2}\right\}
\end{equation}

\myindent Let us first remark that as there holds local existence and uniqueness in $H_x^3$ then the existence of an a priori bound on $[s,M]$ yields that $w^M$ is well-defined on an open neighborhood of $[s,M]$. Thus, $T_{[s_0,M]}(B)$ is well-defined. 

\myindent We prove in this subsection

\begin{lemma}\label{boundf}
    There exists $s_0>0$, $B > 0$ such that for all $M \geq s_0$ there holds
    \begin{equation}
        T_{[s_0,M]}(B) = s_0
    \end{equation}
    \myindent In particular, for all $M\geq s_0$, $f^M$ is defined on $[s_0, M]$.
    
    \myindent Finally, there exists a $C > 0$ such that for all $M\geq s_0$ there holds
    \begin{equation}
        \forall s_0 \leq s \leq M \ \|f^M(s)\|_{H_x^3} \leq \frac{CB}{s \log(s)} 
    \end{equation}
\end{lemma}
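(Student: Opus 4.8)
The plan is to run a continuity (bootstrap) argument backwards from $s=M$. Assuming for contradiction that $T:=T_{[s_0,M]}(B)>s_0$, I would first note that, by the local well-posedness of \eqref{eqw} in $H_x^3$, the quantity $\mathcal{N}(\sigma):=\mathcal{E}(f^M(\sigma))+E(f^M(\sigma))+\scal{f_2^M(\sigma)}{\rho}^2$ is continuous where $f^M$ exists; since the a priori bound $\mathcal{N}(\sigma)\le\frac{B^2}{\sigma^2(\log\sigma)^2}$ holds on $(T,M)$ it persists at $T$, and minimality of $T$ forces equality $\mathcal{N}(T)=\frac{B^2}{T^2(\log T)^2}$. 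The target is then the strict improvement $\mathcal{N}(s)\le\tfrac12\frac{B^2}{s^2(\log s)^2}$ on $[T,M)$, which contradicts the equality at $T$ and yields $T=s_0$; once that is known the a priori bound is global on each $[s_0,M]$, so $w^M=f^M-r^M$ is defined on all of $[s_0,M]$, and the final $H_x^3$ bound on $f^M$ follows from the coercivity estimate \eqref{energy}.

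To get the improvement I would differentiate the three pieces of $\mathcal{N}$ along \eqref{eqf}. Since $E$ and $\mathcal{E}$ are exactly conserved by $\exp(s\mathcal{L})$, only the perturbative terms $IK(s)f^M$, $-IK(s)r^M$, $IN(s)$ enter $\frac{d}{ds}(E+\mathcal{E})(f^M)$. Writing $K(s)=\beta(s)\tilde K$ with $\tilde K=|y|^2-\alpha Q$: in the $IK(s)f^M$ term the derivative-losing contributions should cancel in pairs (the multiplication operator $\tilde K$ is self-adjoint, and the skew matrix $I$ supplies the needed sign), and the surviving commutators are to be absorbed using \eqref{commut} together with Lemma \ref{contnorm2} for the lower-order remainders, leaving a bound by $|\beta(s)|\bigl(E(f^M)+\mathcal{E}(f^M)+\|f^M\|_{L^2}^2\bigr)\lesssim|\beta(s)|\mathcal{N}(s)$; the $-IK(s)r^M$ term I would bound crudely by $|\beta(s)|\,\|f^M\|_{H_x^3}\|r^M\|_{H_x^3}$ and control via Lemma \ref{normr}; and for $IN(s)$, since $H_x^3$ is an algebra by \eqref{algebra} and $N$ is at least quadratic in $w^M=f^M-r^M$, Proposition \ref{infnorm} gives $\|N(s)\|_{H_x^3}\lesssim\eps^{1/2}\|w^M(s)\|_{H_x^3}^2+\|w^M(s)\|_{H_x^3}^3$, so that term is $\lesssim\|f^M\|_{H_x^3}\bigl(\eps^{1/2}\|w^M\|_{H_x^3}^2+\|w^M\|_{H_x^3}^3\bigr)$. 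Feeding in the a priori bound on $(T,M)$ and $|\beta(\sigma)|\lesssim(\sigma\log\sigma)^{-1}$, every integrand is $O(\sigma^{-3}(\log\sigma)^{-3})$ times a polynomial in $B$ whose coefficients carry factors $\eps^{1/2}$, $B_0$ or $s_0^{-1}$; integrating from $M$ down to $s$ and using $f^M(M)=0$ should give $E(f^M(s))+\mathcal{E}(f^M(s))\le\frac{B^2}{s^2(\log s)^2}\cdot\frac{C(\eps^{1/2}P(B)+B_0+s_0^{-1}P(B))}{B\log s_0}$, which — crucially — is smaller than the target by a factor $\log s_0$.

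For the secular component $\scal{f_2^M}{\rho}^2$, which is the genuine difficulty, the point is that $\ker H_-=\R Q$ makes $\exp(s\mathcal{L})$ carry a Jordan block growing linearly in $s$ (indeed $\exp(s\mathcal{L})\bigl(\begin{smallmatrix}\rho\\0\end{smallmatrix}\bigr)=\bigl(\begin{smallmatrix}\rho\\-sQ\end{smallmatrix}\bigr)$), so this mode is a priori fed by a source with no oscillatory decay. Using $H_+\rho=Q$ and $H_-Q=0$, I expect $\frac{d}{ds}\scal{f_2^M}{\rho}=-\scal{f_1^M}{Q}+O(\sigma^{-2}(\log\sigma)^{-2})$ and, similarly, $\frac{d}{ds}\scal{f_1^M}{Q}$ to be a sum of terms $\beta(\sigma)\times(\text{bounded})$ and $O(\sigma^{-2}(\log\sigma)^{-2})$. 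Splitting $f_1^M=w_1^M+r_1^M$, the piece $\int_s^M\scal{r_1^M(\sigma)}{Q}\,d\sigma$ is exactly what Lemma \ref{intr} controls, while $\int_s^M\scal{f_1^M(\sigma)}{Q}\,d\sigma$ becomes a double time-integral of oscillatory-times-$(\sigma\log\sigma)^{-1}$ quantities that, after one more integration by parts exploiting the oscillation of $\beta$, is again $O((s\log s)^{-1})$; this is where the choice $\alpha=\scal{|y|^2Q}{H_+^{-1/2}\psi_1}/\scal{Q^2}{H_+^{-1/2}\psi_1}$ is used, as it removes from $R(s)$ the mode $\psi_1$ (whose eigenvalue $\mu_1$ may equal $4$, by Lemma \ref{specA}) that would otherwise produce a non-oscillating term. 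This should give $\bigl|\scal{f_2^M(s)}{\rho}\bigr|\le C\bigl(B_0+\eps^{1/2}P(B)+s_0^{-1}P(B)\bigr)(s\log s)^{-1}$.

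The hard part is precisely this last step — taming the zero mode — and it is what forces the structural ingredients of Sections 5--6: the oscillation of $\beta$, the orthogonality built into $\alpha$, and the refined integral estimates of Lemmas \ref{normr} and \ref{intr}. To finish, I would collect the two estimates and fix the parameters in the order: $\eps$ small (to validate the spectral and multiplier bounds of Lemma \ref{specA} and Proposition \ref{infnorm}, to make $B_0=B_0(\eps)$ small, and to make the $\eps^{1/2}$ prefactors small), then $B$ (only finitely many coercivity constants to beat), then $s_0$ large so that $C/\log s_0$ and $C/s_0$ are as small as required. This yields $\mathcal{N}(s)\le\tfrac12\frac{B^2}{s^2(\log s)^2}$ on $[T,M)$, contradicting the equality at $T$; hence $T=s_0$, and \eqref{energy} gives $\|f^M(s)\|_{H_x^3}\le\frac{CB}{s\log s}$ on $[s_0,M]$, uniformly in $M$.
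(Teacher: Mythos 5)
Your overall bootstrap architecture matches the paper: you introduce the exit time $T_{[s_0,M]}(B)$, differentiate $E$, $\mathcal{E}$, and $\scal{f_2^M}{\rho}$ along \eqref{eqf}, exploit the exact invariance of $E,\mathcal{E}$ under $\exp(s\mathcal{L})$, absorb the $|y|^2$ loss via \eqref{commut} and \eqref{xcarre}, and close by choosing $\eps$ small, then $B$, then $s_0$ large. The observation about the Jordan block, $\exp(s\mathcal{L})\bigl(\begin{smallmatrix}\rho\\0\end{smallmatrix}\bigr)=\bigl(\begin{smallmatrix}\rho\\-sQ\end{smallmatrix}\bigr)$, is correct and rightly identifies the zero mode $\scal{f_2^M}{\rho}$ as the delicate term. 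The treatments of $E(f^M)$ and $\mathcal{E}(f^M)$ are essentially the paper's.

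The gap is in the zero-mode estimate, and it is genuine. After $\partial_s\scal{f_2^M}{\rho}=-\scal{f_1^M}{Q}+\cdots$, you split $f_1^M=w_1^M+r_1^M$, dispatch $\int_s^M\scal{r_1^M}{Q}$ by Lemma \ref{intr}, and propose to control $\int_s^M\scal{w_1^M(\sigma)}{Q}\,d\sigma$ by one more time-integration plus an integration by parts in $\beta$. This does not work: $\scal{w_1^M}{Q}$ carries no intrinsic $\beta$-oscillation, and $\partial_\sigma\scal{w_1^M}{Q}$ (after the cancellation $\scal{H_-f_2^M}{Q}=0$) contains the term $\scal{N_2(\sigma)}{Q}$, which has no oscillatory structure and is only $O\bigl(B^2\sigma^{-2}(\log\sigma)^{-2}\bigr)$. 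Writing the double integral in Fubini form,
\begin{equation}
\int_s^M\scal{w_1^M(\sigma)}{Q}\,d\sigma=-\int_s^M(\tau-s)\,\partial_\tau\scal{w_1^M(\tau)}{Q}\,d\tau,
\end{equation}
the non-oscillatory piece contributes $\int_s^M \tau\cdot\frac{B^2}{\tau^2(\log\tau)^2}\,d\tau\lesssim \frac{B^2}{\log s}$. This is uniformly bounded but not $O\bigl((s\log s)^{-1}\bigr)$: it is off by a full factor of $s$, so $\scal{f_2^M(s)}{\rho}^2$ cannot be bounded by $\frac{1}{6}\frac{B^2}{s^2(\log s)^2}$ and the bootstrap does not close.

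What is missing is the paper's crucial algebraic fact (the reason $\rho$ was introduced in the first place): since $v^M=w^M+Q$ solves the modulated NLS with $v^M(M)=Q$ and that flow conserves the $L^2$ norm, one has
\begin{equation}
\scal{w_1^M(s)}{Q}=-\tfrac{1}{2}\,\|w^M(s)\|_{L^2}^2,
\end{equation}
so the dangerous linear-looking term is in fact \emph{quadratic} in $w^M$. Under the bootstrap hypothesis this gives $|\scal{w_1^M(\sigma)}{Q}|\lesssim B^2\sigma^{-2}(\log\sigma)^{-2}$ directly, and a \emph{single} integration in $\sigma$ yields $O\bigl(B^2 s^{-1}(\log s)^{-2}\bigr)$, which is $\frac{B^2}{\log s_0}$ better than the target and closes the estimate once $B\gtrsim B_0$ and $\log s_0\gtrsim B$. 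Without this identity the argument for $\scal{f_2^M}{\rho}$ fails, and no amount of integration by parts in $\beta$ repairs it, because the obstruction sits in the non-oscillating nonlinear term $\scal{N_2}{Q}$.
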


\myindent Thanks to local existence and uniqueness, to prove the proposition, it suffices to prove that given $s_0, B$ large enough there holds for all $M > s_0$

\begin{equation}
    \forall\ T_{[s_0,M]}(B) \leq s \leq M \quad \mathcal{E}(f^M(\sigma)) + E(f^M(\sigma)) + \scal{f_2^M(\sigma)}{\rho}^2 \leq \frac{1}{2}\frac{B^2}{\sigma^2 (\log \sigma)^2}
\end{equation}

\myindent Indeed, there cannot hold $T_{[s_0,M]}(B) > s_0$ in that situation.

\quad

\myindent Now, the interest of a bootstrap argument is that we can  work only on $[T_{[s_0,M]}(B),M]$, where we already know a priori estimates. Let us take $T_{[s_0, M]}(B) < s < M$. There holds thanks to \eqref{eqf}

\begin{equation}
    \partial_s \begin{pmatrix} f_1^M\\f_2^M\end{pmatrix} = \begin{pmatrix} H_- f_2^M + \beta(s)(|y|^2 - \alpha Q(y))f_2^M - \beta(s)(|y|^2 - \alpha Q(y)) r_2^M\\ -H_+ f_1^M - \beta(s)(|y|^2 - \alpha Q(y))f_1^M + \beta(s)(|y|^2 - \alpha Q(y)) r_1^M\end{pmatrix} + IN(s)
\end{equation}

which yields by definition of $\rho = H_+^{-1}Q$

\begin{equation}
    \begin{split}
    \partial_s \scal{f_2^M}{\rho} &= -\scal{f_1^M}{Q} - \beta(s)\scal{f_1^M}{|y|^2\rho - \alpha Q\rho} + \beta(s)\scal{r_1^M(s)}{|y|^2 \rho - \alpha Q\rho} - \scal{N(s)_1}{\rho} \\
    \partial_s E(f) &= \scal{\mathcal{H}IK(s)f}{f} - \scal{\mathcal{H}IK(s)r^M}{f} + \scal{\mathcal{H} f}{IN(s)}\\
    \partial_s \mathcal{E}(f) &= -\scal{H_+H_-H_+ K(s)f_2}{f_1} + \scal{H_-H_+H_- K(s) f_1}{f_2} - \scal{\mathcal{H}_3 IK(s)r^M(s)}{f} + \scal{\mathcal{H}_3f}{IN(s)}
    \end{split}
\end{equation}

\subsubsection{\textit{\textmd{Estimate of $\scal{f^M_2}{\rho}^2$}}}

\myindent Let us study the first equation. The following algebraic fact is fundamental (and it is here that it is useful to have introduced $\rho$) : $v^M := w^M + Q$ solves 
\begin{equation}
\begin{cases}
i\partial_s v^M &= (H-\lambda) v^M + \beta(s)(|y|^2- \alpha Q)v^M + v^M|v^M|^2 \\
    v^M(M) &= Q
\end{cases}
\end{equation}

\myindent This equation preserves the $L^2$ norm. We thus see that 

\begin{equation}
    \|w^M(s)\|_{L^2}^2 + 2\scal{w^M_1(s)}{Q} + \|Q\|_{L^2}^2 = \|w^M(s) + Q\|_{L^2}^2 = \|Q\|_{L^2}^2
\end{equation}

\myindent Thus, $\scal{w_1^M}{Q}$ is \textit{quadratic} rather than linear as could be expected : 

\begin{equation}
    |\scal{w_1^M(s)}{Q}| = \frac{1}{2} \|w^M(s)\|_{L^2}^2 \leq \frac{C B^2}{s^2 (\log s)^2}
\end{equation}

as long as $T_{[s_0, M]}(B) < s < M$. Now, using $f_1^M = w_1^M - r_1^M$, there holds

\begin{multline}
    \scal{f_2^M(s)}{\rho} = -\int_s^M \scal{r_1^M(\sigma)}{Q} d\sigma \\
    + \int_s^M \left(\scal{w_1^M(\sigma)}{Q} + \beta(\sigma)\scal{f_1^M(\sigma)}{|y|^2 \rho - \alpha Q\rho} - \beta(\sigma)\scal{r_1^M(\sigma)}{|y|^2\rho-\alpha Q\rho} + \scal{N(\sigma)_1}{\rho}\right)d\sigma
\end{multline}

\myindent Using lemma \eqref{intr}, the first integral is bounded by $\frac{B_0}{s\log s}$ with $B_0$ universal, and the second integral's integrand can be bounded by $\frac{CB^2}{\sigma^2(\log \sigma)^2}$ with $C$ universal, thus we can bound the second integral by $\frac{CB^2}{s(\log s)^2}$ with $C$ a constant. We finally get

\begin{equation}
    \scal{f_2^M(s)}{\rho}^2\leq \frac{CB^4}{s^2(\log s)^4} + \frac{B_0^2}{s^2 (\log s)^2} \leq \frac{1}{6} \frac{B^2}{s^2 (\log s)^2}
\end{equation}

provided we choose $B$ big enough with regard to $B_0$ then $s_0$ big enough.

\subsubsection{\textit{\textmd{Estimate of $E(f^M)$}}} 

\myindent We first recall lemma \eqref{xcarre} 

\begin{equation}
    \||y|^2u\|_{H_x^r} \leq C_r \|u\|_{H_x^{r+1}}
\end{equation}

\myindent This enables us to estimate via the second equation and \eqref{normr}

\begin{equation}
    |\partial_s E(f)|\leq \frac{C}{s\log s} \|f\|_{H_x^3}^2 + \frac{C}{s^2(\log s)^2}\|f\|_{H_x^3} + \|f\|_{H_x^3} \|N(s)\|_{L^2}
\end{equation}

\myindent Moreover there holds $\|N(s)\|_{L^2} \leq \|N(s)\|_{H_x^3}$. Now, $H_x^3$ is an algebra thanks to \eqref{algebra} and $N(s)$ is quadratic in $w^M$, so we have the bound 

\begin{equation}
    \|N(s)\|_{L^2} \leq C \|w^M\|_{H_x^3}^2
\end{equation}

\myindent Using now $w^M = f^M - r^M$ and the hypothesis $\|f^M\|_{H_x^3} \leq \frac{B}{s\log s}$ and \eqref{normr} we have by choosing $B \geq B_0$ that 

\begin{equation}
    \|N(s)\|_{L^2} \leq \frac{C B^2}{s^2 (\log s)^2}
\end{equation}

\myindent This ensures that 

\begin{equation}
    |\partial_s E(f^M)| \leq \frac{CB^3}{s^3 (\log s)^3}
\end{equation}

\myindent Integrating between $s$ and $M$ (as $f^M(M) = 0$) yields

\begin{equation}
    E(f^M) \leq \frac{C B^3}{s^2 (\log s)^3} \leq \frac{1}{6} \frac{B^2}{s^2 (\log s)^2}
\end{equation}

as long as $CB \leq \frac{1}{6} \log s_0$

\subsubsection{\textit{\textmd{Estimate of $\mathcal{E}(f^M)$}}} 

\myindent We finally need to bound $\mathcal{E}(f^M)$. Examining the right-hand side of the third equation, we can decompose it as $\beta(s)\left(-\scal{H^3 |y|^2 f_2}{f_1} + \scal{H^3 |y|^2 f_1}{f_2}\right) +$ (remainder) where the remainder can be bounded using that $H_x^3$ is an algebra (\eqref{algebra}), lemma \eqref{xcarre}, lemma \eqref{normr} and proposition \eqref{infnorm}, and the a priori bound over $\|f^M\|_{H_x^3}$, by $\frac{CB^3}{s^3(\log s)^3}$ with $C$ a universal constant. We do not detail the proof which is very similar to what we have done so far.

\myindent We then use lemma \eqref{commut}

\begin{equation}
    |-\scal{H^3 |y|^2 f_2}{f_1} + \scal{H^3 |y|^2 f_1}{f_2}| \leq C\|f\|_{H_x^3}^2
\end{equation}

\myindent Indeed we can write

\begin{equation}
    -\scal{H^3 |y|^2 f_2}{f_1} + \scal{H^3 |y|^2 f_1}{f_2} = \scal{H^{\frac{3}{2}} f_1}{\left[H^{\frac{3}{2}},|y|^2\right] f_2} + \scal{H^{\frac{3}{2}} f_2}{\left[H^{\frac{3}{2}},|y|^2\right] f_1}
\end{equation}

\myindent Thus, by choosing $B$ big enough with regards to universal constants, there holds

\begin{equation}
    |\partial_s \mathcal{E}(f^M)| \leq \frac{C B^3}{s^3 (\log s)^3}
\end{equation}

\myindent Hence

\begin{equation}
    \mathcal{E}(f^M(s)) \leq \frac{C B^3}{s^2 (\log s)^3}
\end{equation}

with $C$ a universal constant. Thus, by choosing $s_0$ big enough, there holds

\begin{equation}
    \mathcal{E}(f^M(s)) \leq \frac{1}{6} \frac{B^2}{s^2 (\log s)^2}
\end{equation}

\subsubsection{\textit{\textmd{Conclusion}}} 

\myindent We have proved that there exists a suitable choice of $B$ and $s_0$ such that while $T_{[s_0,M]}(B) \leq \sigma \leq M$ there holds

\begin{equation}
    \mathcal{E}(f^M(\sigma)) + E(f^M(\sigma)) + \scal{f_2^M(\sigma)}{\rho}^2 \leq \frac{1}{2}\frac{B^2}{\sigma^2 (\log \sigma)^2}
\end{equation}

\myindent Which ensures that $T_{[s_0,M]}(B) = s_0$ for all $M \geq s_0$, which is the announced result.

\myindent Now, lemma \eqref{boundf} and the bound of $r^M$ \eqref{normr} yield that $w^M$ is well-defined on $(s_0,M)$ for all $M> s_0$ and that there exists $B > 0$ such that 

\begin{equation}
    \forall s_0 < s < M \quad \|w^M(s)\|_{H_x^3} \leq \frac{B}{s \log(s)}
\end{equation}

\subsection{\textit{Cauchy sequence}}
\myindent Let us now show that $(w^M)$ is a Cauchy sequence in $\mathcal{C}([s_0,T], H_x^3)$ when $M \to \infty$ for all $T$. We use the notations introduced so far : let $s_0, B$ as built before, and for $s_0 < N < M$ let us see that 

\begin{equation}
    \partial_s(w^M - w^N) = \mathcal{L}(w^M - w^N) + IK(s)(w^M - w^N) + I(N^M(s) - N^N(s))
\end{equation}

\myindent This ensures that

\begin{equation}
    \begin{split}
    \partial_s \scal{(w_2^M - w_2^N)(s)}{\rho} &= -\scal{(w_1^M - w_1^N)(s)}{Q} - \beta(s)\scal{w_1^M - w_1^N}{|y|^2Q - \alpha Q^2}\\
    &+ \scal{N^M_1(s) - N^N_1(s)}{Q}\\
    \partial_s E(w^M - w^N) &= \scal{\mathcal{H}IK(s)(w^M - w^N)}{w^M - w^N}\\
    &+ \scal{\mathcal{H} (w^M - w^N)}{I(N^M(s) - N^N(s))}\\
    \partial_s \mathcal{E}(w^M - w^N) &= \scal{\mathcal{H}_3 IK(s)(w^M - w^N)}{w^M - w^N}\\
    &+ \scal{\mathcal{H}_3(w^M - w^N)}{I(N^M - N^N)}
    \end{split}
\end{equation}

\myindent Using \eqref{algebra} we thus have up to a bigger $s_0$ that 

\begin{equation}
    \begin{split}
    \|N^M(s) - N^N(s)\|_{H_x^3} &\leq C\|w^M(s) - w^N(s)\|_{H_x^3}(\|w^M(s)\|_{H_x^3} + \|w^N(s)\|_{H_x^3})\\
    &\leq \frac{CB}{s\log s}\|w^M(s) - w^N(s)\|_{H_x^3}
    \end{split}
\end{equation}

\myindent With the same techniques as before, there holds

\begin{equation}
    \begin{split}
    |\partial_s E(w^M - w^N)| &\leq \frac{C B}{s \log s} \|w^M - w^N\|_{H_x^3}^2 \\
    |\partial_s \mathcal{E}(w^M - w^N)| &\leq \frac{CB}{s \log s}\|w^M - w^N\|_{H_x^3}^2
    \end{split}
\end{equation}

\myindent Moreover using $\scal{w_1^M}{Q} = -\frac{1}{2}\|w^M\|_{L^2}^2$ we find

\begin{equation}
    \begin{split}
    \left| \scal{w_1^M(s) - w_1^N(s)}{Q}\right| &= \frac{1}{2}\left|\|w^M(s)\|_{L^2}^2  - \|w^N(s)\|_{L^2}^2\right|\\
    &\leq C\|w^M(s) - w^N(s)\|_{H_x^3}(\|w^M(s)\|_{H_x^3} + \|w^N(s)\|_{H_x^3})\\
    &\leq \frac{CB}{s \log s}\|w^M(s) - w^N(s)\|_{_x^3}
    \end{split}
\end{equation}

\myindent From which we infer

\begin{equation}
    \left|\partial_s \scal{w_2^M(s) - w_2^N(s)}{\rho}\right| \leq \frac{C B}{s \log s} \|w^M(s) - w^N(s)\|_{H_x^3}
\end{equation}

and thus

\begin{equation}
    \begin{split}
    \left|\partial_s\left(\scal{w_2^M(s) -w_2^N(s)}{\rho}^2\right)\right| &= 2 \left|\scal{w_2^M(s) -w_2^N(s)}{\rho}\right|\left| \partial_s \scal{w_2^M(s) -w_2^N(s)}{\rho}\right|\\
    &\leq \frac{CB}{s \log s} \|w^M - w^N\|_{H_x^3}^2
    \end{split}
\end{equation}

\myindent Integrating between $s$ and $N$, and using $w^N(N)= 0$, yields

\begin{equation}
    \begin{split}
    \scal{w_2^M(s) -w_2^N(s)}{\rho}^2 &\leq \scal{w_2^M(N)}{\rho}^2 + \int_s^N \frac{CB}{\sigma \log \sigma}\|w^M(\sigma) - w^N(\sigma)\|_{H_x^3}^2 d\sigma \\
   E(w^M(s) - w^N(s)) &\leq E(w^M(N)) + \int_s^N \frac{CB}{\sigma \log \sigma} \|w^M(\sigma) - w^N(\sigma)\|_{H_x^3}^2 d\sigma\\
   \mathcal{E}(w^M(s) - w^N(s)) &\leq \mathcal{E}(w^M(N)) + \int_s^N \frac{CB}{\sigma \log \sigma}\|w^M(\sigma) - w^N(\sigma)\|_{H_x^3}^2 d\sigma
   \end{split}
\end{equation}

\myindent Thus there exists a universal constant $C$ such that for all $s_0 \leq s \leq N$  

\begin{equation}
    \|w^M(s) - w^N(s)\|_{H_x^3}^2 \leq C\|w^M(N)\|_{H_x^3}^2 + \int_s^N \frac{C}{\sigma \log \sigma}\|w^M(\sigma) - w^N(\sigma)\|_{H_x^3}^2 d\sigma
\end{equation}

\myindent The Gronwall lemma, and the fact $\|w^M(N)\|_{H_x^3}^2 \leq \frac{C}{N^2 (\log N)^2}$ yield 

\begin{equation}
    \begin{split}
    \|w^M(s) - w^N(s)\|_{H_x^3}^2 &\leq \frac{C}{N^2 (\log N)^2} + \int_s^N \frac{C}{N^2 (\log N)^2 \sigma \log \sigma} \exp\left(\int_s^N \frac{C}{\sigma \log \sigma}\right)\\
    &\leq \frac{C}{N^2 (\log N)^2}\left(1 + \int_s^N \frac{(\log \sigma)^{C-1}}{\sigma} d\sigma\right)\\
    &\leq \frac{C'}{N}
    \end{split}
\end{equation}

where $C'$ is a constant.

\myindent Thus, $(w^M)$ is a Cauchy sequence in $\mathcal{C}((s_0,T),H_x^3)$ which converges uniformly to a $w(s)$. Using integral form of the equation, $w$ is a $\mathcal{C}^1$ solution to \eqref{eqw} on $(s_0, +\infty)$ with the bound

\begin{equation}
    \|w(s)\|_{H_x^3} \leq \frac{B}{s\log s}
\end{equation}

\myindent This concludes the proof of proposition \eqref{princprop}. Let us observe that the only role that the potential $W$ plays (hence $V$ in theorem \eqref{main}) is to cancel the contribution of the eigenvalue $\mu_1$ of $A$ in the oscillations as if $\mu_1 = 4$ then $\cos((\mu_1 - 4)\sigma)$ would be a constant and we wouldn't be able to obtain our estimates. We weren't able to explicitly compute the value of $\mu_1$ when $\eps \to 0$, but let us stress that if one could prove that there are arbitrarily small $\eps$'s for which $\mu_1 \neq 4$ then one could set $V = 0$ in theorem \eqref{main}. 

\section{Proof of Theorem \eqref{main}}

\myindent Let us choose $s_0$ such as built in the previous section, $t(s_0) = s_0$ and $\frac{dt}{ds} = L^2(s)$ as in proposition \eqref{restraj}. We have built

\begin{equation}
    u(t,x) = e^{-i\lambda s(t)} S_{L(t)}\left( e^{-i\frac{b(t)|\cdot|^2}{4}} v(s(t),\cdot) \right)(x)
\end{equation}

a solution to

\begin{equation}
    i\partial_t u(t,x) = Hu(t,x) + u(t,x)|u(t,x)|^2 + V(t,x) u(t,x)
\end{equation}

with the potential

\begin{equation}
    V(t,x) = -\alpha\frac{1}{L^2(t)} \beta(s(t)) Q\left(\frac{x}{L(t)}\right)
\end{equation}

\myindent Using proposition \eqref{restraj} there holds for all $k,r$ that 

\begin{equation}
    \lim_{t\to\infty} \|\partial_t^k V(t,x)\|_{H_x^r} = 0
\end{equation}

\myindent Now, we remind that 

\begin{equation}
    v(s,y) = Q(y) + w(s,y)
\end{equation}

\myindent Thus we can decompose

\begin{equation}
    u(t,x) = u_0(t,x) + u_1(t,x)
\end{equation}

where

\begin{equation}
    u_0(t,x) = e^{-i\lambda s(t)} S_{L(t)} \left(e^{-\frac{ib(t)|\cdot|^2}{4}} Q\right)(x)
\end{equation}

and

\begin{equation}
    u_1(t,x) = e^{-i\lambda s(t)} S_{L(t)}\left( e^{-\frac{ib(t)|\cdot|^2}{4}} w(s,\cdot)\right)(x)
\end{equation}

\myindent Thus, propositions \eqref{princprop} and \eqref{restraj} along with lemma \eqref{enest} yield that

\begin{equation}
\begin{split}
\|u_1(t,x)\|_{H_x^1}^2 &= \left\|S_{L(t)}\left( e^{-\frac{ib(t)|\cdot|^2}{4}} w(s,\cdot)\right)(x)\right\|_{H_x^1}^2\\
 &\leq 2E(L,b) \|w(s,y)\|_{H_x^1}^2 \\
    &\leq C'\log(t) \frac{1}{t^2(\log t)^2}
    \end{split}
\end{equation}

\myindent Therefore 

\begin{equation}
    \|u_1(t,x)\|_{H_x^1} = O\left(\frac{1}{t \left(\log(t)\right)^{\frac{1}{2}}}\right) << \left(\log(t)\right)^{\frac{1}{2}}
\end{equation}

\myindent Similarly, lemma \eqref{enest} and proposition \eqref{restraj} yield

\begin{equation}
    \begin{split}
    \|u_0\|_{H_x^1}^2 &= \left(L^2 + \frac{b^2}{4L^2}\right)\|xQ\|_{L^2}^2 + \frac{1}{L^2}\|\nabla Q\|_{L^2}^2 \\
    &\geq E(L,b) \min(\|xQ\|_{L^2}^2, \|\nabla Q\|_{L^2}^2)\\
    &\geq C \log(t)
    \end{split}
\end{equation}

and

\begin{equation}
    \begin{split}
    \|u_0\|_{H_x^1}^2 &= \left(L^2 + \frac{b^2}{4L^2}\right)\|xQ\|_{L^2}^2 + \frac{1}{L^2}\|\nabla Q\|_{L^2}^2 \\
    &\leq E(L,b) \max(\|xQ\|_{L^2}^2, \|\nabla Q\|_{L^2}^2)\\
    &\leq C' \log(t)
    \end{split}
\end{equation}

\myindent Thus we have proven that there exists constants $c, C > 0$ such that 

\begin{equation}
    c\left(\log(t)\right)^{\frac{1}{2}} \leq \|u_0(t)\|_{H_x^1} \leq C\left(\log(t)\right)^{\frac{1}{2}}
\end{equation}

and we have similar bounds for $u$ itself as $\|u_1(t,x)\|_{H_x^1} \to 0$.

\myindent This nearly concludes the proof to theorem \eqref{main}. It remains to be shown that $u$ has $\mathcal{C}^{\infty}$ regularity. However that is a straightforward consequence of local uniqueness and the fact that $Q,L,b$ have $\mathcal{C}^{\infty}$ regularity.

\printbibliography
\end{document}